\newcommand{\R}{\mathbb{R}}
\begin{document}
\title{A Universal Algebraic Survey of $\mathcal{C}^{\infty}-$Rings}
\author{Jean Cerqueira Berni\inst{1} \and
Hugo Luiz Mariano\inst{2}}
\authorrunning{J. C. Berni and H. L. Mariano}
%
\institute{Institute of Mathematics and Statistics, University of S\~{a}o Paulo,
Rua do Mat\~{a}o, 1010, S\~{a}o Paulo - SP, Brazil. \email{jeancb@ime.usp.br} \and
Institute of Mathematics and Statistics, University of S\~{a}o Paulo,
Rua do Mat\~{a}o, 1010, S\~{a}o Paulo - SP, Brazil, \email{hugomar@ime.usp.br}}
\maketitle 

\begin{abstract}
	In this paper we present some basic results of the Universal Algebra of $\mathcal{C}^{\infty}-$rings which were nowhere to be found in the current literature. The outstanding book of I. Moerdijk and G. Reyes, \cite{MSIA}, presents the basic (and advanced) facts about $\mathcal{C}^{\infty}-$rings, however such a presentation has no universal algebraic ``flavour''. We have been inspired to describe $\mathcal{C}^{\infty}-$rings through this viewpoint by D. Joyce in \cite{Joyce}. Our main goal here is to provide a comprehensive  material with detailed proofs of many known ``taken for granted''  results and constructions used in the literature about $\mathcal{C}^{\infty}-$rings and their applications - such proofs either could not be found or were merely sketched. We present, in detail, the main constructions one can perform within this category, such as limits, products, homomorphic images, quotients, directed colimits, free objects and others, providing a ``propaedeutic exposition'' for the reader's benefit. 	
\end{abstract}

\section*{Introduction}

\hspace{0.6cm} As observed by E. Dubuc in \cite{Dubucs}, a $\mathcal{C}^{\infty}-$ring is a model of the algebraic theory which has as $n-$ary operations all the smooth functions from $\R^n$ into $\R$, and whose axioms are all the equations that hold between these functions. Since every polynomial is smooth, all $\mathcal{C}^{\infty}-$rings are, in particular, $\mathbb{R}-$algebras. This point of view allows us to regard this theory as an extension of the concept of $\R-$algebra.\\

The theory of $\mathcal{C}^{\infty}-$rings has been originally  studied in view of its applications to Singularity Theory and in order to construct topos-models for Synthetic Differential Geometry (the Dubuc Topos, for instance), which ``grew out of ideas of Lawvere in the 1960s'' (cf. \cite{Joyce}). Recently, however, this theory has been explored by some eminent mathematicians like David I. Spivak and Dominic Joyce in order to extend Jacob Lurie's program of Derived Algebraic Geometry to Derived Differential Geometry (cf. \cite{Joyce} and \cite{Joyce2} ).\\

Just as any Lawvere theory, $\mathcal{C}^{\infty}-$rings can be interpreted within any topos. In this specific case, a $\mathcal{C}^{\infty}-$ring in a topos $\mathcal{E}$ is a finite product preserving functor from the category whose objects are the Cartesian products of $\mathbb{R}$ and whose morphisms are the smooth functions between them, into $\mathcal{E}$  (see, for example, \cite{rings1}). In this work, however, we focus on set-theoretic $\mathcal{C}^{\infty}-$rings (i.e., $\mathcal{C}^{\infty}-$rings in the topos of sets, ${\rm \bf Set}$).\\

This is the first paper of a series that presents a detailed account of some algebraic aspects of $\mathcal{C}^{\infty}-$rings. Some categorial and logical aspects were given in \cite{SAJL}. In the next papers we are going to present some topics on Smooth Commutative Algebra and von Neumann regular $\mathcal{C}^{\infty}-$rings. Here we present and analyze a $\mathcal{C}^{\infty}-$ring as a universal algebra whose functional symbols are the symbols for all smooth functions from Cartesian powers of $\mathbb{R}$ to $\mathbb{R}$. Such an approach emphasizes the power of a $\mathcal{C}^{\infty}-$ring in interpreting a broader language than the algebraic one, which is expressed in terms of the $\mathbb{R}-$algebra structure. It also has the advantage of giving us explicitly many constructions, such as products, coproducts, directed colimits, among others, as well as simpler proofs of the main results.\\

We make a detailed exposition of the description of free $\mathcal{C}^{\infty}-$rings in terms of a colimit, and we use it to account the often used description of an arbitrary $\mathcal{C}^{\infty}-$rings in terms of generators and relations.\\

Our idea of describing $\mathcal{C}^{\infty}-$rings from a universal-algebraic point of view  was mainly inpired by the clear and elegant presentation made by Dominic Joyce in \cite{Joyce} - which we found very enlightening.\\

\textbf{Overview of the paper:} We begin by presenting the equational theory of $\mathcal{C}^{\infty}-$rings in terms of a first order language with a denumerable set of variables. We define the class of $\mathcal{C}^{\infty}-$structures and the (equationally defined) subclass of $\mathcal{C}^{\infty}-$rings.\\

In the \textbf{Section \ref{main}} we present a detailed description of the main constructions involving $\mathcal{C}^{\infty}-$rings: $\mathcal{C}^{\infty}-$subrings (\textbf{Definition \ref{sub}}), intersections (\textbf{Proposition \ref{Zimbabue}}), the $\mathcal{C}^{\infty}-$subring generated by a set (\textbf{Definition \ref{gen}}), the directed union of $\mathcal{C}^{\infty}-$rings (\textbf{Proposition \ref{xiquexique}}), products (\textbf{Definition \ref{produto}}), $\mathcal{C}^{\infty}-$con\-gruences (\textbf{Definition \ref{conga}}) and quotients (\textbf{Definition \ref{quot}}), homomorphic images (\textbf{Propo\-sition \ref{H}}), directed colimits (\textbf{Theorem \ref{colimite}}) and small projective limits (\textbf{Theorem \ref{projlim}}). We present and prove the ``Fundamental Theorem of $\mathcal{C}^{\infty}-$Homomorphism'' (\textbf{Theorem \ref{FTH}}). We prove also that the category of $\mathcal{C}^{\infty}-$rings is a reflective subcategory of the category of all $\mathcal{C}^{\infty}-$structures (\textbf{Theorem \ref{adjQV-le}}).\\

We dedicate \textbf{Section \ref{Petrus}} to  describe the free $\mathcal{C}^{\infty}-$rings, first with a finite set of generators and then with an arbitrary set of generators. We use this construction in order to describe an adjunction between the category of all $\mathcal{C}^{\infty}-$rings and $\mathcal{C}^{\infty}-$homomorphisms, $\mathcal{C}^{\infty}{\rm \bf Rng}$ and the category of sets, ${\rm \bf Set}$ (\textbf{Proposition \ref{Julia}}).\\

In \textbf{Section \ref{oc}} we describe other constructions, and in \textbf{Subsection \ref{coquette}} we use Hadamard's lemma (\textbf{Theorem \ref{hadamard}}) to prove that the ring-theoretic ideals of any finitely generated $\mathcal{C}^{\infty}-$ring classify their congruences (\textbf{Proposition \ref{Cemil}}). We extend this result by presenting a proof for the general case (\textbf{Proposition \ref{Meirelles}}).\\

We prove that any $\mathcal{C}^{\infty}-$ring can be expressed as a directed colimit of finitely generated $\mathcal{C}^{\infty}-$rings (\textbf{Theorem \ref{LimMono}}) and in \textbf{Subsection \ref{kop}} we present an explicit description for the $\mathcal{C}^{\infty}-$coproduct of $\mathcal{C}^{\infty}-$rings (\textbf{Definition \ref{cop}}). We end up this work presenting an ubiquitous construction in Algebra in the \textbf{Subsection \ref{pol}}, namely the $\mathcal{C}^{\infty}-$ring of $\mathcal{C}^{\infty}-$polynomials, constructed in terms of the $\mathcal{C}^{\infty}-$product. Such a construction will play an important role in the future papers on Smooth Commutative Algebra and von Neumann Regular $\mathcal{C}^{\infty}-$rings.\\


\section{Preliminaries: The equational theory of $C^\infty$-rings}

\hspace{0.5cm}The theory of $\mathcal{C}^{\infty}-$rings can be described within a first order language $\mathcal{L}$ with a denumerable set of variables (${\rm \bf Var}(\mathcal{L}) = \{ x_1, x_2, \cdots, x_n, \cdots\}$) whose nonlogical symbols are the symbols of $\mathcal{C}^{\infty}-$functions from $\mathbb{R}^m$ to $\mathbb{R}^n$, with $m,n \in \mathbb{N}$, \textit{i.e.}, the non-logical symbols consist only of function symbols, described as follows:\\

For each $n \in \mathbb{N}$, the $n-$ary \textbf{function symbols} of the set $\mathcal{C}^{\infty}(\mathbb{R}^n, \mathbb{R})$, \textit{i.e.}, $\mathcal{F}_{(n)} = \{ f^{(n)} | f \in \mathcal{C}^{\infty}(\mathbb{R}^n, \mathbb{R})\}$. So the set of function symbols of our language is given by:
      $$\mathcal{F} = \bigcup_{n \in \mathbb{N}} \mathcal{F}_{(n)} = \bigcup_{n \in \mathbb{N}} \mathcal{C}^{\infty}(\mathbb{R}^n)$$
      Note that our set of constants is $\mathbb{R}$, since it can be identified with the set of all $0-$ary function symbols, \textit{i.e.}, ${\rm \bf Const}(\mathcal{L}) = \mathcal{F}_{(0)} = \mathcal{C}^{\infty}(\mathbb{R}^0) \cong \mathcal{C}^{\infty}(\{ *\}) \cong \mathbb{R}$.\\

The terms of this language are defined, in the usual way, as the smallest set which comprises the individual variables, constant symbols and $n-$ary function symbols followed by $n$ terms ($n \in \mathbb{N}$).\\

\begin{definition} \label{cabala} A \textbf{$\mathcal{C}^{\infty}-$structure} on a set $A$ is a pair $ \mathfrak{A} =(A,\Phi)$, where:

 $$\begin{array}{cccc}
 \Phi: & \bigcup_{n \in \mathbb{N}} \mathcal{C}^{\infty}(\mathbb{R}^n, \mathbb{R})& \rightarrow & \bigcup_{n \in \mathbb{N}} {\rm Func}\,(A^n; A)\\
      & (f: \mathbb{R}^n \stackrel{\mathcal{C}^{\infty}}{\to} \mathbb{R}) & \mapsto & \Phi(f) := (f^{A}: A^n \to A)
 \end{array},$$
that is, $\Phi$ interprets the \textbf{symbols}\footnote{here considered simply as syntactic symbols rather than functions.} of all smooth real functions of $n$ variables as $n-$ary function symbols on $A$.
\end{definition}

We call a $\mathcal{C}^{\infty}-$struture $\mathfrak{A} = (A, \Phi)$ a \textbf{$\mathcal{C}^{\infty}-$ring} if it preserves  projections and all equations between smooth functions. We have the following:

\begin{definition}\label{CravoeCanela}Let $\mathfrak{A}=(A,\Phi)$ be a $\mathcal{C}^{\infty}-$structure. We say that $\mathfrak{A}$ (or, when there is no danger of confusion, $A$) is a \textbf{$\mathcal{C}^{\infty}-$ring} if the following is true:\\

$\bullet$ Given any $n,k \in \mathbb{N}$ and any projection $p_k: \mathbb{R}^n \to \mathbb{R}$, we have:

$$\mathfrak{A} \models (\forall x_1)\cdots (\forall x_n)(p_k(x_1, \cdots, x_n)=x_k)$$

$\bullet$ For every $f, g_1, \cdots g_n \in \mathcal{C}^{\infty}(\mathbb{R}^m, \mathbb{R})$ with $m,n \in \mathbb{N}$, and every $h \in \mathcal{C}^{\infty}(\mathbb{R}^n, \mathbb{R})$ such that $f = h \circ (g_1, \cdots, g_n)$, one has:
$$\mathfrak{A} \models (\forall x_1)\cdots (\forall x_m)(f(x_1, \cdots, x_m)=h(g(x_1, \cdots, x_m), \cdots, g_n(x_1, \cdots, x_m)))$$
\end{definition}

\begin{definition}Let $(A, \Phi)$ and $(B,\Psi)$ be two $\mathcal{C}^{\infty}-$rings. A function $\varphi: A \to B$ is called a \textbf{morphism of $\mathcal{C}^{\infty}-$rings} or \textbf{$\mathcal{C}^{\infty}$-homomorphism} if for any $n \in \mathbb{N}$ and any $f: \mathbb{R}^n \stackrel{\mathcal{C}^{\infty}}{\to} \mathbb{R}$ the following diagram commutes:
$$\xymatrixcolsep{5pc}\xymatrix{
A^n \ar[d]_{\Phi(f)}\ar[r]^{\varphi^{(n)}} & B^n \ar[d]^{\Psi(f)}\\
A \ar[r]^{\varphi^{}} & B
}$$
 \textit{i.e.}, $\Psi(f) \circ \varphi^{(n)} = \varphi^{} \circ \Phi(f)$.
\end{definition}

Since $\mathcal{L}$ does not contain any relational symbol, the set of the \textbf{atomic formulas}, \textbf{AF}, is given simply by the equality between terms, that is
$${\rm \bf AF} = \{ t_1 = t_2 | t_1,t_2 \in \mathbb{T}\}$$

Finally, the \textbf{well formed formulas}, \textbf{WFF} are constructed as one usually does in any first order theory.\\

One possible set of axioms for the theory of the $\mathcal{C}^{\infty}$-rings can be given by the following two sets of equations:

\begin{itemize}
  \item[\textbf{(E1)}]{For each $n \in \mathbb{N}$ and for every $k\leq n$, denoting the $k$-th projection by $p_k: \mathbb{R}^n \to \mathbb{R}$, the equations:
      $${\rm \bf Eq}_{(1)}^{n,k} = \{ (\forall x_1)\cdots(\forall x_n)(p_k(x_1, \cdots, x_n) = x_k)\}$$}
  \item[\textbf{(E2)}]{for every $k,n \in \mathbb{N}$ and for every $(n+2)-$tuple of function symbols, $(f,g_1, \cdots,g_n,h)$ such that $f \in \mathcal{F}_{(n)}$, $g_1, \cdots, g_n, h \in \mathcal{F}_{(k)}$ and $h = f\circ (g_1, \cdots, g_n)$, the equations:
     \begin{multline*}{\rm \bf Eq}_{(2)}^{n,k} = \\
     =\{(\forall x_1)\cdots(\forall x_k)(h(x_1, \cdots,x_k) = f(g_1(x_1, \cdots, x_k), \cdots, g_n(x_1, \cdots, x_k)))\}\end{multline*}}
\end{itemize}

The class of $\mathcal{C}^{\infty}-$ring is, thus, equational, and as such it a

Now we present an alternative description of a $\mathcal{C}^{\infty}-$rings making use of universal algebraic concepts,inspired by \cite{Joyce}.\\

We first define the notion of a \index{$\mathcal{C}^{\infty}-$structure}$\mathcal{C}^{\infty}-$structure:\\

\begin{definition}\label{cabala} A \textbf{$\mathcal{C}^{\infty}-$structure} on a set $A$ is a pair $(A,\Phi)$, where:

$$\begin{array}{cccc}
\Phi: & \bigcup_{n \in \mathbb{N}} \mathcal{C}^{\infty}(\mathbb{R}^n, \mathbb{R})& \rightarrow & \bigcup_{n \in \mathbb{N}} {\rm Func}\,(A^n; A)\\
      & (f: \mathbb{R}^n \stackrel{\mathcal{C}^{\infty}}{\to} \mathbb{R}) & \mapsto & \Phi(f) := (f^{A}: A^n \to A)
\end{array},$$

\hspace{-0.51cm}that is, $\Phi$ is a function that interprets the \textbf{symbols}\footnote{here considered simply as syntactic symbols rather than functions.} of all smooth real functions of $n$ variables as $n-$ary function symbols on $A$.
\end{definition}


\begin{definition}Let $(A, \Phi)$ and $(B,\Psi)$ be two $\mathcal{C}^{\infty}-$structures. A function $\varphi: A \to B$ is called a \index{$\mathcal{C}^{\infty}-$structure!morphism of $\mathcal{C}^{\infty}-$structures}\textbf{morphism of $\mathcal{C}^{\infty}-$structures} (or, simply, a $\mathcal{C}^{\infty}-$morphism) if for any $n \in \mathbb{N}$ and any $f \in \mathcal{C}^{\infty}\,(\mathbb{R}^n, \mathbb{R})$ the following diagram commutes:
$$\xymatrixcolsep{5pc}\xymatrix{
A^n \ar[d]_{\Phi(f)}\ar[r]^{\varphi^{(n)}} & B^n \ar[d]^{\Psi(f)}\\
A \ar[r]^{\varphi} & B
}$$
 \textit{i.e.}, $\Psi(f) \circ \varphi^{(n)} = \varphi \circ \Phi(f)$.
\end{definition}

\begin{theorem}Let $(A,\Phi), (B,\Psi), (C,\Omega)$ be any $\mathcal{C}^{\infty}-$structures, and let $\varphi: (A,\Phi) \to (B, \Psi)$ and $\psi: (B,\Psi) \to (C,\Omega)$ be two morphisms of $\mathcal{C}^{\infty}-$structures. We have:
\begin{itemize}
  \item[(1)]{${\rm id}_A: (A,\Phi) \to (A,\Phi)$ is a morphism of $\mathcal{C}^{\infty}-$structures;}
  \item[(2)]{$\psi \circ \varphi: (A,\Phi) \to (C,\Omega)$ is a morphism of $\mathcal{C}^{\infty}-$structures.}
\end{itemize}
\end{theorem}
\begin{proof}
Ad (1): Given any $n \in \mathbb{N}$ and any $f \in \mathcal{C}^{\infty}\,(\mathbb{R}^n,\mathbb{R})$, since ${\rm id_A}^{(n)} = {\rm id}_{A^n}$, we have:

$$\xymatrixcolsep{5pc}\xymatrix{
A^n \ar[d]^{\Phi(f)}\ar[r]^{{\rm id}_A^{(n)}} & A^n \ar[d]^{\Phi(f)}\\
A \ar[r]^{{{\rm id}_A}} & A
}$$

so $\Phi(f) \circ {{\rm id}_A}^{(n)} = \Phi(f) \circ {\rm id}_{A^n} = \Phi(f) = {{\rm id}_{A}} \circ \Phi(f)$, thus ${\rm id}_A: (A, \Phi) \to (A, \Phi)$ is a morphism of $\mathcal{C}^{\infty}-$structures.\\

Ad (2): Suppose that $\varphi: (A,\Phi) \to (B, \Psi)$ and $\psi: (B,\Psi) \to (C,\Omega)$ are two morphisms of $\mathcal{C}^{\infty}-$structures, so given any $m \in \mathbb{N}$ and any $f \in \mathcal{C}^{\infty}(\mathbb{R}^m, \mathbb{R})$ the following diagrams commute:

$$\begin{array}{cc}\xymatrixcolsep{5pc}\xymatrix{
A^m \ar[d]^{\Phi(f)}\ar[r]^{\varphi^{(m)}} & B^m \ar[d]^{\Psi(f)}\\
A \ar[r]^{\varphi} & B
} & \xymatrixcolsep{5pc}\xymatrix{
B^m \ar[d]^{\Psi(f)}\ar[r]^{\psi^{(m)}} & C^m \ar[d]^{\Omega(f)}\\
B \ar[r]^{\psi} & C
}\end{array}$$

Since $(\psi \circ \varphi)^{(m)} = \psi^{(m)} \circ \varphi^{(m)}$, the following diagram commutes:

$$\xymatrixcolsep{5pc}\xymatrix{
A^m \ar[d]^{\Phi(f)}\ar[r]^{(\psi \circ \varphi)^{(m)}} & C^m \ar[d]^{\Omega(f)}\\
A \ar[r]_{(\psi \circ \varphi)} & C
}$$

so $\psi \circ \varphi: (A,\Phi) \to (C, \Omega)$ is a morphism of $\mathcal{C}^{\infty}-$structures.
\end{proof}

\begin{theorem}Let $(A,\Phi), (B,\Psi), (C,\Omega), (D, \Gamma)$ be any $\mathcal{C}^{\infty}-$structures, and let $\varphi: (A,\Phi) \to (B, \Psi)$, $\psi: (B,\Psi) \to (C,\Omega)$ and $\nu: (C, \Omega) \to (D,\Gamma)$ be morphisms of $\mathcal{C}^{\infty}-$structures. We have the following equations between pairs of morphisms of $\mathcal{C}^{\infty}-$structures:

$$\nu \circ (\psi \circ \varphi) = (\nu \circ \psi) \circ \varphi;$$

$$\varphi \circ {\rm id}_A = {\rm id}_B \circ \varphi.$$

\end{theorem}
\begin{proof}
Since, as functions, we have:
$$\nu \circ (\psi \circ \varphi) = (\nu \circ \psi) \circ \varphi$$
and
$$\varphi \circ {\rm id}_A = {\rm id}_B \circ \varphi$$

and the composition of morphisms of $\mathcal{C}^{\infty}-$structures is again a morphism of $\mathcal{C}^{\infty}-$structures, it follows that the equality between morphisms of $\mathcal{C}^{\infty}-$structures holds.
\end{proof}

\begin{definition}We are going to denote by $\mathcal{C}^{\infty}{\rm \bf Str}$ the category whose objects are the $\mathcal{C}^{\infty}-$structures and whose morphisms are the morphisms of $\mathcal{C}^{\infty}-$structures.
\end{definition}

As a full subcategory of $\mathcal{C}^{\infty}{\rm \bf Str}$ we have the category of $\mathcal{C}^{\infty}-$rings, defined as follows:

\begin{definition}Let $(A,\Phi)$ be a $\mathcal{C}^{\infty}-$structure. We say that $(A,\Phi)$ is a \index{$\mathcal{C}^{\infty}-$ring}\textbf{$\mathcal{C}^{\infty}-$ring} if the following two conditions are fulfilled:
\begin{itemize}
  \item[(1)]{For each $n \in \mathbb{N}$ and for every $k\leq n$, denoting the $k$-th projection by $p_k: \mathbb{R}^n \to \mathbb{R}$, we have:
      $$\Phi(p_k) = \pi_k: A^n \to A,$$
      that is, each projection is interpreted as the canonical projection on the $k-$th coordinate, $\pi_k: A^n \to A$}
  \item[(2)]{for every $k,n \in \mathbb{N}$ and for every $(n+2)-$tuple of smooth functions: $$(f,g_1, \cdots,g_n,h)$$
  such that $f \in \mathcal{C}^{\infty}(\mathbb{R}^n, \mathbb{R})$, $g_1, \cdots, g_n, h \in \mathcal{C}^{\infty}(\mathbb{R}^k, \mathbb{R})$, we have:
      $$h = f\circ (g_1, \cdots, g_n) \Rightarrow \Phi(h) = \Phi(f)\circ (\Phi(g_1), \cdots, \Phi(g_n))$$}
\end{itemize}
Given two $\mathcal{C}^{\infty}-$rings $(A,\Phi)$ and $(B,\Psi)$, a \index{$\mathcal{C}^{\infty}-$ring!morphism of $\mathcal{C}^{\infty}-$rings}\textbf{morphism of $\mathcal{C}^{\infty}-$rings} from $(A,\Phi)$ to $(B,\Psi)$, or a \\
\textbf{$\mathcal{C}^{\infty}-$homo\-morphism} from $(A,\Phi)$ to $(B,\Psi)$, is simply a morphism of $\mathcal{C}^{\infty}-$struc\-tures from $(A,\Phi)$ to $(B,\Psi)$. We will denote the category of $\mathcal{C}^{\infty}-$rings and $\mathcal{C}^{\infty}-$homo\-morphisms by $\mathcal{C}^{\infty}{\rm \bf Rng}$.
\end{definition}

Thus, in the context of first order theories, a model of a  \textbf{$\mathcal{C}^{\infty}-$ring} is a $\mathcal{C}^{\infty}-$structure, $\mathfrak{A} = (A,\Phi)$ such that:

$\bullet$ For every $n \in \mathbb{N}$, $k \leq n$, denoting the projection on the $k-$th coordinate by $p_k: \mathbb{R}^n \to \mathbb{R}$:\\

$$\mathfrak{A} \models (\forall x_1)\cdots (\forall x_n)(p_k(x_1, \cdots, x_n)=x_k)$$ \\
that is, $\Phi(p_k)=\pi_k: A^n \to A$;\\

$\bullet$ For every $n,k \in \mathbb{N}$, $f \in \mathcal{C}^{\infty}(\mathbb{R}^n,\mathbb{R})$, $h,g_1, \cdots,g_n \in \mathcal{C}^{\infty}(\mathbb{R}^k,\mathbb{R})$ such that $h = f(g_1, \cdots, g_n)$:

$$\mathfrak{A} \models (\forall x_1)\cdots (\forall x_k) (h(x_1, \cdots,x_k) = f(g_1(x_1, \cdots, x_k), \cdots, g_n(x_1, \cdots, x_k))$$

that is, $\Phi(h) = \Phi(f)(\Phi(g_1), \cdots, \Phi(g_n))$.\\

As we are going to see later on, the category of $\mathcal{C}^{\infty}-$rings and its morphisms has many constructions, such as arbitrary products, coproducts, directed colimits, quotients and many others. It also ``extends'' the category of commutative unital rings, ${\rm \bf CRing}$, in the following sense:\\

\begin{remark}\label{Rock}Since the sum $+: \mathbb{R}^{2} \to \mathbb{R}$, the opposite, $-: \mathbb{R} \rightarrow \mathbb{R}$, $\cdot: \mathbb{R}^2 \to \mathbb{R}$ and the constant functions $0: \mathbb{R} \rightarrow \mathbb{R}$ and $1: \mathbb{R} \rightarrow \mathbb{R}$ are particular cases of $\mathcal{C}^{\infty}-$functions, any $\mathcal{C}^{\infty}-$ring $(A,\Phi)$ may be regarded as a commutative ring $(A,\Phi(+), \Phi(\cdot), \Phi(-), \Phi(0), \Phi(1))$, where:
$$\begin{array}{cccc}
    \Phi(+): & A \times A & \to & A \\
     & (a_1,a_2) & \mapsto & \Phi(+)(a_1,a_2)= a_1 + a_2
\end{array}$$

$$\begin{array}{cccc}
    \Phi(-): & A  & \to & A \\
     & a & \mapsto & \Phi(-)(a)= -a
\end{array}$$

$$\begin{array}{cccc}
    \Phi(0): & A^0  & \to & A \\
     & * & \mapsto & \Phi(0)
\end{array}$$

$$\begin{array}{cccc}
    \Phi(1): & A^0  & \to & A \\
     & * & \mapsto & \Phi(1)
\end{array}$$

where $A^0=\{ *\}$, and:

$$\begin{array}{cccc}
    \Phi(\cdot): & A \times A & \to & A \\
     & (a_1,a_2) & \mapsto & \Phi(\cdot)(a_1,a_2)= a_1 \cdot a_2
\end{array}$$

Thus, we have a forgetful functor:

$$\begin{array}{cccc}
\widetilde{U}: & \mathcal{C}^{\infty}{\rm \bf Rng} & \rightarrow & {\rm \bf CRing}\\
   & \xymatrix{(A\,\Phi) \ar[d]^{\varphi} \\ (B,\Psi)} & \mapsto & \xymatrix{(A, \Phi(+), \Phi(\cdot), \Phi(-), \Phi(0), \Phi(1)) \ar[d]^{\varphi \upharpoonright}\\
   (B, \Psi(+), \Psi(\cdot), \Psi(-), \Psi(0), \Psi(1))}
\end{array}$$

$$\widetilde{U}: \mathcal{C}^{\infty}{\rm \bf Rng} \to {\rm \bf CRing}$$
\end{remark}

Analogously, we can define a forgetful functor from the category of $\mathcal{C}^{\infty}-$rings and $\mathcal{C}^{\infty}-$homomorphisms into the category of commutative $\mathbb{R}-$algebras with unity,

$$\hat{U}: \mathcal{C}^{\infty}{\rm \bf Rng} \rightarrow \mathbb{R}-{\rm \bf Alg}$$

These functors will be analyzed with detail in the next paper, \cite{TSCA}.

\section{The main constructions in the category of $\mathcal{C}^{\infty}-$rings}\label{main}

\hspace{0.5cm}Since the theory of $\mathcal{C}^{\infty}-$rings is equational, the class $\mathcal{C}^{\infty}{\rm \bf Rng}$ is closed in $\mathcal{C}^{\infty}{\rm \bf Str}$ under many algebraic constructions, such as substructures, products, quotients, directed colimits and others. In this section we give explicit descriptions for some of these constructions.\\

\subsection{$\mathcal{C}^{\infty}-$Subrings}

We begin defining what we mean by a $\mathcal{C}^{\infty}-$subring.\\

\begin{definition}\label{sub} Let $(A,\Phi)$ be a $\mathcal{C}^{\infty}-$ring and let $B \subseteq A$. Under these circumstances, we say that $(B,\Phi')$ is a \index{$\mathcal{C}^{\infty}-$subring}$\mathcal{C}^{\infty}-$subring of $(A,\Phi)$ if, and only if, for any $n \in \mathbb{N}$, $f \in \mathcal{C}^{\infty}(\mathbb{R}^n,\mathbb{R})$ and any $(b_1, \cdots, b_n) \in B^n$ we have:
$$\Phi(f)(b_1, \cdots, b_n) \in B$$
That is to say that $B$ is closed under any $\mathcal{C}^{\infty}-$function $n$-ary symbol. Note that the $\mathcal{C}^{\infty}-$structure of $B$ is virtually the same as the $\mathcal{C}^{\infty}-$structure of $(A,\Phi)$, since they interpret every smooth function in the same way. However $\Phi'$ has a different codomain, as:
$$\begin{array}{cccc}
\Phi': & \bigcup_{n \in \mathbb{N}}\mathcal{C}^{\infty}\,(\mathbb{R}^n, \mathbb{R}) & \rightarrow & \bigcup_{n \in \mathbb{N}} {\rm Func}\,(B^n,B)\\
   & (\mathbb{R}^n \stackrel{f}{\rightarrow} \mathbb{R}) & \mapsto & (\Phi(f)\upharpoonright_{B^n}: B^n \rightarrow B)
\end{array}$$
\end{definition}

We observe that $\Phi'$ is the unique $\mathcal{C}^{\infty}-$structure such that the inclusion map:

$$\iota^{A}_{B}: B \hookrightarrow A$$

is a $\mathcal{C}^{\infty}-$homomorphism.\\

We are going to denote the class of all $\mathcal{C}^{\infty}-$subrings of a given $\mathcal{C}^{\infty}-$ring $(A,\Phi)$ by ${\rm Sub}\,(A,\Phi)$.\\

Next we prove that the intersection of any family of $\mathcal{C}^{\infty}-$subrings of a given $\mathcal{C}^{\infty}-$ring is, again, a $\mathcal{C}^{\infty}-$subring.\\

\begin{proposition}\label{Zimbabue}Let $\{ (A_{\alpha}, \Phi_{\alpha}) | \alpha \in \Lambda\}$ be a family of $\mathcal{C}^{\infty}-$subrings of $(A,\Phi)$, so

$$(\forall \alpha \in \Lambda)(\forall n \in \mathbb{N})(\forall f \in \mathcal{C}^{\infty}(\mathbb{R}^n, \mathbb{R}))(\Phi_{\alpha}(f) = \Phi(f)\upharpoonright_{{A_{\alpha}}^n}: {A_{\alpha}}^n \rightarrow A_{\alpha})$$

We have that:
$$\left( \bigcap_{\alpha \in \Lambda} A_{\alpha}, \Phi'\right)$$
where:
$$\begin{array}{cccc}
\Phi': & \bigcup_{n \in \mathbb{N}}\mathcal{C}^{\infty}(\mathbb{R}^n, \mathbb{R}) & \rightarrow & \bigcup_{n \in \mathbb{N}}{\rm Func}\, \left( \left( \bigcap_{\alpha \in \Lambda} A_{\alpha}\right)^n, \bigcap_{\alpha \in \Lambda} A_{\alpha}\right)\\
  & (\mathbb{R}^n \stackrel{f}{\rightarrow} \mathbb{R}) & \mapsto & \Phi(f)\upharpoonright_{\left(\bigcap_{\alpha \in \Lambda} A_{\alpha}\right)^n}: \left( \bigcap_{\alpha \in \Lambda} A_{\alpha}\right)^n \rightarrow \bigcap_{\alpha \in \Lambda} A_{\alpha}
\end{array}$$

is a $\mathcal{C}^{\infty}-$subring of $(A,\Phi)$.
\end{proposition}
\begin{proof} Let $n \in \mathbb{N}$, $f \in \mathcal{C}^{\infty}(\mathbb{R}^n, \mathbb{R})$ and $(a_1, \cdots, a_n) \in \left( \bigcap_{\alpha \in \Lambda}A_{\alpha}\right)^n$.\\

Since for every $i \in \{1, \cdots, n\}$ we have $a_i \in \bigcap_{\alpha \in \Lambda} A_{\alpha}$, we have, for every $\alpha \in \Lambda$, $a_i \in A_{\alpha}$ for every $i \in \{ 1, \cdots, n\}$.\\

Since every $(A_{\alpha}, \Phi_{\alpha})$ is a $\mathcal{C}^{\infty}-$subring of $(A,\Phi)$, it follows that for every $\alpha \in \Lambda$ we have:

$$\Phi_{\alpha}(f)(a_1, \cdots, a_n):= \Phi(f)\upharpoonright_{A_{\alpha}^n}(a_1, \cdots, a_n) \in A_{\alpha}$$

thus

$$\Phi'(f)(a_1, \cdots, a_n) = \Phi_{\alpha}(f)\upharpoonright_{\left(\bigcap_{\alpha \in \Lambda} A_{\alpha}\right)^n}(a_1, \cdots, a_n) \in \bigcap_{\alpha \in \Lambda} A_{\alpha}$$

so the result follows.\\
\end{proof}

As an application of the previous result, we can define the \index{$\mathcal{C}^{\infty}-$subring! $\mathcal{C}^{\infty}-$subring generated by a subset}$\mathcal{C}^{\infty}-$subring generated by a subset of the carrier of a $\mathcal{C}^{\infty}-$ring:\\

\begin{definition}\label{gen}Let $(A,\Phi)$ be a $\mathcal{C}^{\infty}-$ring and $X \subseteq A$. The $\mathcal{C}^{\infty}-$subring of $(A,\Phi)$  generated by $X$ is given by:

$$\langle X\rangle = \bigcap_{\substack{X \subseteq A_i\\
(A_i, \Phi_i) \preceq (A,\Phi)}}(A_i,\Phi_i),$$

where $(A_i, \Phi_i) \preceq (A,\Phi)$ means that $(A_i, \Phi_i)$ is a $\mathcal{C}^{\infty}-$subring of $(A,\Phi)$

together with the $\mathcal{C}^{\infty}-$structure given in \textbf{Proposition \ref{Zimbabue}}.
\end{definition}

We note that, given any $\mathcal{C}^{\infty}-$ring $(A,\Phi)$, the map of partially ordered sets given by:

$$\begin{array}{cccc}
    \sigma: & (\wp(A), \subseteq) & \rightarrow & ({\rm Sub}\,(A), \subseteq) \\
     & X & \mapsto & \langle X \rangle
  \end{array}$$

satisfies the axioms of a \index{closure operation}closure operation.\\

\subsection{Directed union of $\mathcal{C}^{\infty}-$rings}

In general, given an arbitrary family of $\mathcal{C}^{\infty}-$subrings, $(A_{\alpha},\Phi_{\alpha})_{\alpha \in \Lambda}$ of a given $\mathcal{C}^{\infty}-$ring $(A,\Phi)$, its union, $\bigcup_{\alpha \in \Lambda} A_{\alpha}$, together with $\Phi\upharpoonright_{\cup_{\alpha \in \Lambda} A_{\alpha}}$, needs not to be a $\mathcal{C}^{\infty}-$subring of $(A,\Phi)$. However, there is an important case in which the union of a family of $\mathcal{C}^{\infty}-$subrings of a $\mathcal{C}^{\infty}-$ring $(A,\Phi)$ is again a $\mathcal{C}^{\infty}-$ring. This case is discussed in the following:\\

\begin{proposition}\label{xiquexique}
Let $(A,\Phi)$ be a $\mathcal{C}^{\infty}-$ring and let $\{ (A_{\alpha}, \Phi_{\alpha}) | \alpha \in \Lambda\}$, $\Lambda \neq \varnothing$, be a \index{directed family}directed family of $\mathcal{C}^{\infty}-$subrings of $(A,\Phi)$, that is, a family such that for every pair $(\alpha, \beta) \in \Lambda \times \Lambda$ there is some $\gamma \in \Lambda$ such that:
$$A_{\alpha} \subseteq A_{\gamma}$$
and
$$A_{\beta} \subseteq A_{\gamma}$$
We have that:
$$\left( \bigcup_{\alpha \in \Lambda} A_{\alpha}, \Phi'\right)$$
where:
$$\begin{array}{cccc}
\Phi': & \bigcup_{n \in \mathbb{N}}\mathcal{C}^{\infty}(\mathbb{R}^n, \mathbb{R}) & \rightarrow & \bigcup_{n \in \mathbb{N}} {\rm Func}\, \left( \left( \bigcup_{\alpha \in \Lambda} A_{\alpha}\right)^n, \bigcup_{\alpha \in \Lambda} A_{\alpha}\right)\\
 & (\mathbb{R}^n \stackrel{f}{\rightarrow} \mathbb{R}) & \mapsto & \Phi(f)\upharpoonright_{\left(\bigcup_{\alpha \in \Lambda} A_{\alpha}\right)^n}: \left( \bigcup_{\alpha \in \Lambda} A_{\alpha}\right)^n \to \bigcup_{\alpha \in \Lambda} A_{\alpha}
\end{array}$$

is a $\mathcal{C}^{\infty}-$subring of $(A,\Phi)$.
\end{proposition}
\begin{proof} First we note that since $\Lambda$ is directed, we have:

$$\bigcup_{\alpha \in \Lambda}(A_{\alpha})^n = \left( \bigcup_{\alpha \in \Lambda}A_{\alpha}\right)^n$$

Let $n \in \mathbb{N}$, $f \in \mathcal{C}^{\infty}(\mathbb{R}^n, \mathbb{R})$ and $(a_1, \cdots, a_n) \in \left( \bigcup_{\alpha \in \Lambda}A_{\alpha}\right)^n$. By definition of the union $\bigcup_{\alpha \in \Lambda} A_{\alpha}$, for every $i \in \{ 1, \cdots, n\}$ there is some $\alpha_i \in \Lambda$ such that:
$$a_i \in A_{\alpha_i}.$$

Since $\{ (A_{\alpha}, \Phi_{\alpha}) | \alpha \in \Lambda\}$ is directed, given $(A_{\alpha_1}, \Phi_{\alpha_1}), \cdots, (A_{\alpha_n}, \Phi_{\alpha_n})$, there is some $\gamma \in \Lambda$ such that for every $i \in \{1, \cdots, n \}$ we have:

$$A_{\alpha_i} \subseteq A_{\gamma}$$

Thus, $(a_1, \cdots, a_n) \in A_{\gamma}^n$, and since $(A_{\gamma}, \Phi_{\gamma})$ is a $\mathcal{C}^{\infty}-$subring of $(A,\Phi)$, it follows that:

$$\Phi_{\gamma}(f)(a_1, \cdots, a_n) := \Phi(f)\upharpoonright_{{A_{\gamma}}^n}(a_1, \cdots, a_n) \in A_{\gamma} \subseteq \bigcup_{\alpha \in \Lambda} A_{\alpha}$$

Since $\Lambda$ is directed and  $(A_{\gamma})^n \subseteq \left(\bigcup_{\alpha \in \lambda} A_{\alpha}\right)^n$, we have:

$$\Phi(f)\upharpoonright_{\left( \bigcup_{\alpha \in \Lambda}A_{\alpha}\right)^n}(a_1, \cdots, a_n) = \Phi(f)\upharpoonright_{(A_{\gamma})^n}(a_1, \cdots, a_n)$$

so:

\begin{multline*}\Phi'(f)(a_1, \cdots, a_n):= \Phi(f)\upharpoonright_{\left( \bigcup_{\alpha \in \Lambda}A_{\alpha}\right)^n}(a_1, \cdots, a_n) = \\
=\Phi(f)\upharpoonright_{(A_{\gamma})^n}(a_1, \cdots, a_n) \in A_{\gamma} \subseteq \bigcup_{\alpha \in \Lambda}A_{\alpha}\end{multline*}

Thus, for every $(a_1, \cdots, a_n) \in \left( \bigcup_{\alpha \in \Lambda}A_{\alpha} \right)^n$, we have $\Phi'(f)(a_1, \cdots, a_n) \in \bigcup_{\alpha \in \Lambda}A_{\alpha}$.\\

This proves that $\left( \bigcup_{\alpha \in \Lambda} A_{\alpha}, \Phi'\right)$ is a $\mathcal{C}^{\infty}-$subring of $(A,\Phi)$.
\end{proof}


\subsection{Products, $\mathcal{C}^{\infty}-$Congruences and Quotients}

Next we describe the products in the category $\mathcal{C}^{\infty}{\rm \bf Rng}$, that is, products of arbitrary families of $\mathcal{C}^{\infty}-$rings.\\

\begin{definition}\label{produto}Let $\{ (A_{\alpha}, \Phi_{\alpha}) | \alpha \in \Lambda \}$ be a family of $\mathcal{C}^{\infty}-$rings. The \index{product}\textbf{product} of this family is the pair:

$$\left( \prod_{\alpha \in \Lambda} A_{\alpha}, \Phi^{(\Lambda)} \right)$$

where $\Phi$ is given by:

$$\begin{array}{cccc}
  \Phi^{(\Lambda)}:   & \bigcup_{n \in \mathbb{N}}\mathcal{C}^{\infty}\,(\mathbb{R}^n, \mathbb{R}) & \rightarrow & \bigcup_{n \in \mathbb{N}} {\rm Func}\, \left( \left( \prod_{\alpha \in \Lambda} A_{\alpha}\right)^n, \prod_{\alpha \in \Lambda} A_{\alpha} \right) \\
     & (f: \mathbb{R}^n \stackrel{\mathcal{C}^{\infty}}{\rightarrow} \mathbb{R})& \mapsto & \begin{array}{cccc}
     \Phi^{(\Lambda)}(f): & \left( \prod_{\alpha \in \Lambda} A_{\alpha}\right)^n & \rightarrow & \prod_{\alpha \in \Lambda} A_{\alpha}\\
     & \!\!\!((x_{\alpha}^1)_{\alpha \in \Lambda}, \cdots, (x_{\alpha}^n)_{\alpha \in \Lambda}) & \mapsto & (\Phi_{\alpha}(f)(x_{\alpha}^1, \cdots, x_{\alpha}^{n}))_{\alpha \in \Lambda}
     \end{array}
\end{array}$$
\end{definition}

\begin{remark}\label{Karola}In particular, given a $\mathcal{C}^{\infty}-$ring $(A,\Phi)$, we have the product $\mathcal{C}^{\infty}-$ring:

$$(A\times A, \Phi^{(2)})$$

where:

$$\begin{array}{cccc}
    \Phi^{(2)} : & \bigcup_{n \in \mathbb{N}}\mathcal{C}^{\infty}(\mathbb{R}^n, \mathbb{R}) & \rightarrow & \bigcup_{n \in \mathbb{N}} {\rm Func}\,((A\times A)^n, A \times A)\\
     & (f: \mathbb{R}^n \stackrel{\mathcal{C}^{\infty}}{\rightarrow} \mathbb{R}) & \mapsto & (\Phi \times \Phi)(f): (A \times A)^n \rightarrow A \times A
\end{array}$$

and:

$$\begin{array}{cccc}
     \Phi^{(2)}(f): & (A\times A)^n & \rightarrow & A \times A\\
        & ((x_1,y_1), \cdots, (x_n, y_n)) & \mapsto & (\Phi(f)(x_1, \cdots,x_n), \Phi(f)(y_1, \cdots, y_n))
     \end{array}$$
\end{remark}

We turn now to the definition of congruence relations in $\mathcal{C}^{\infty}-$rings. As  we shall see later on, the congruences of a $\mathcal{C}^{\infty}-$rings will be classified by their ring-theoretic ideals.\\

\begin{definition}\label{conga}Let $(A, \Phi)$ be a $\mathcal{C}^{\infty}-$ring. A \index{$\mathcal{C}^{\infty}-$congruence}\textbf{$\mathcal{C}^{\infty}-$congruence}  is an equivalence relation $R \subseteq A \times A$  such that for every $n \in \mathbb{N}$ and $f \in \mathcal{C}^{\infty}(\mathbb{R}^n, \mathbb{R})$ we have:

$$(x_1, y_1), \cdots, (x_n,y_n) \in R \Rightarrow \Phi^{(2)}(f)((x_1,y_1), \cdots, (x_n,y_n)) \in R$$

In other words,a $\mathcal{C}^{\infty}-$congruence is an equivalence relations that preserves $\mathcal{C}^{\infty}-$function symbols.
\end{definition}

A characterization of a $\mathcal{C}^{\infty}-$congruence can be given using the product $\mathcal{C}^{\infty}-$structure, as we see in the following:\\

\begin{proposition}\label{P}Let $(A,\Phi)$ be a $\mathcal{C}^{\infty}-$ring and let $R \subseteq A \times A$ be an equivalence relation. Under these circumstances, $R$ is a $\mathcal{C}^{\infty}-$congruence on $(A,\Phi)$ if, and only if, $(R \times R, {\Phi^{(2)}}')$, where:

$$\begin{array}{cccc}
{\Phi^{(2)}}': & \bigcup_{n \in \mathbb{N}} \mathcal{C}^{\infty}\,(\mathbb{R}^n, \mathbb{R}) & \rightarrow & \bigcup_{n \in \mathbb{N}}{\rm Func}\,(R^n, R)\\
   & (\mathbb{R}^n \stackrel{f}{\rightarrow} \mathbb{R}) & \mapsto & \Phi^{(2)}(f)\upharpoonright_{R^n}: R^{n} \rightarrow R
\end{array}$$

is a $\mathcal{C}^{\infty}-$subring of $(A\times A, \Phi^{(2)})$, with the structure described in the \textbf{Remark \ref{Karola}}.
\end{proposition}
\begin{proof}
Given any $n \in \mathbb{N}$, $f \in \mathcal{C}^{\infty}(\mathbb{R}^n, \mathbb{R})$, $((x_1, y_1), \cdots, (x_n, y_n)) \in R^{n}$  we have,
by definition:
$$\Phi^{(2)}(f)((x_1,y_1) \cdots, (x_n,y_n)) :=  (\Phi(f)(x_1, \cdots, x_n), \Phi(f)(y_1, \cdots, y_n))$$

and since $R$ is a $\mathcal{C}^{\infty}-$congruence, we have:

$$((x_1,y_1), \cdots, (x_n,y_n)) \in R^n  \Rightarrow {\Phi}^{(2)}(f)((x_1,y_1), \cdots, (x_n,y_n)) \in R$$

Since:

$${\Phi}^{(2)}(f)((x_1,y_1), \cdots, (x_n,y_n)) = (\Phi(f)(x_1, \cdots, x_n), \Phi(f)(y_1, \cdots, y_n))$$

it follows that

$$((x_1,y_1), \cdots, (x_n,y_n)) \in R^{n} \Rightarrow (\Phi(f)(x_1, \cdots, x_n), \Phi(f)(y_1, \cdots, y_n)) \in R$$

Also, since $R^n \subseteq (A \times A)^n$,

$$\Phi^{(2)}(f)\upharpoonright_{R^n}((x_1,y_1), \cdots, (x_n,y_n)) = \Phi^{(2)}(f)((x_1,y_1), \cdots, (x_n,y_n)) $$
so

\begin{multline*}
{\Phi^{(2)}}'(f)((x_1,y_1), \cdots, (x_n,y_n)) = \Phi^{(2)}(f)\upharpoonright_{R^n}((x_1,y_1), \cdots, (x_n,y_n)) = \\
= \Phi^{(2)}(f)((x_1,y_1), \cdots, (x_n,y_n)) = (\Phi(f)(x_1, \cdots, x_n), \Phi(f)(y_1, \cdots, y_n))\in R
\end{multline*}

Thus $(R, {\Phi^{(2)}}')$ is a $\mathcal{C}^{\infty}-$subring of $(A \times A, \Phi^{(2)})$.\\

Conversely, let $R$ be an equivalence relation on $A$ such that  $(R, {\Phi^{(2)}}')$ is a $\mathcal{C}^{\infty}-$subring of $(A \times A, \Phi^{(2)})$.\\

Given any $n \in \mathbb{N}$, $f \in \mathcal{C}^{\infty}(\mathbb{R}^n, \mathbb{R})$ and $(x_1, y_1), \cdots, (x_n, y_n) \in R$, since $(R, {\Phi^{(2)}}')$ is a $\mathcal{C}^{\infty}-$subring of $(A\times A, \Phi^{(2)})$, we have ${\Phi^{(2)}}'(f)((x_1,y_1), \cdots, (x_n,y_n)) \in R$.\\

By definition,

$${\Phi^{(2)}}'(f)((x_1,y_1), \cdots, (x_n,y_n)) = {\Phi^{(2)}}(f)\upharpoonright_{R^n}((x_1,y_1), \cdots, (x_n,y_n))$$

and since $R^n \subseteq (A \times A)^n$, we have:

$${\Phi^{(2)}}(f)\upharpoonright_{R^n}((x_1,y_1), \cdots, (x_n,y_n)) = {\Phi^{(2)}}(f)((x_1,y_1), \cdots, (x_n,y_n))$$

and

$$ {\Phi^{(2)}}(f)((x_1,y_1), \cdots, (x_n,y_n)) = (\Phi(f)(x_1, \cdots, x_n), \Phi(f)(y_1, \cdots, y_n))$$
Thus,

$$(x_1,y_1), \cdots, (x_n,y_n) \in R \Rightarrow (\Phi(f)(x_1, \cdots, x_n), \Phi(f)(y_1, \cdots, y_n)) \in R$$

and $R$ is a $\mathcal{C}^{\infty}-$congruence.
\end{proof}

\begin{remark}\label{pele}
Given a $\mathcal{C}^{\infty}-$ring $(A,\Phi)$ and a $\mathcal{C}^{\infty}-$congruence $R \subseteq A \times A$, let:

$$\dfrac{A}{R}= \{ \overline{a} | a \in A \}$$

be the quotient set. Given any $n \in \mathbb{N}$, $f \in \mathcal{C}^{\infty}(\mathbb{R}^n, \mathbb{R})$ and $(\overline{a_1}, \cdots, \overline{a_n}) \in \left( \dfrac{A}{R}\right)^n$ we define:

$$\begin{array}{cccc}
\overline{\Phi}: & \bigcup_{n \in \mathbb{N}}\mathcal{C}^{\infty}\,(\mathbb{R}^n, \mathbb{R}) & \rightarrow & \bigcup_{n \in \mathbb{N}} {\rm Func}\left( \left( \dfrac{A}{R}\right)^n, \dfrac{A}{R}\right)\\
  & (f: \mathbb{R}^n \stackrel{\mathcal{C}^{\infty}}{\rightarrow} \mathbb{R}) & \mapsto & \left(\overline{\Phi}(f): \left( \dfrac{A}{R}\right)^n \rightarrow \dfrac{A}{R}\right)
\end{array}$$

where:

$$\begin{array}{cccc}
\overline{\Phi}(f): & \left( \dfrac{A}{R}\right)^n & \rightarrow & \dfrac{A}{R}\\
 & (\overline{a_1}, \cdots, \overline{a_n}) & \mapsto & \overline{\Phi(f)(a_1, \cdots, a_n)}
\end{array}$$

Note that the interpretation above is indeed a function, that is, its value does not depend on any particular choice of the representing element. This means that given $(\overline{a_1}, \cdots, \overline{a_n}), (\overline{a_1'}, \cdots, \overline{a_n'}) \in \left( \dfrac{A}{R} \right)^n$ such that $(a_1, a_1'), \cdots, (a_n, a_n') \in R$, we have:

$$\overline{\Phi}(f)(\overline{a_1}, \cdots, \overline{a_n}) = \overline{\Phi(f)(a_1, \cdots, a_n)}$$

and since $R$ is a $\mathcal{C}^{\infty}-$congruence,

$$(a_1, a_1'), \cdots, (a_n, a_n') \in R \Rightarrow (\Phi(f)(a_1, \cdots, a_n), \Phi(f)(a_1', \cdots, a_n')) \in R$$

so:

$$\overline{\Phi}(\overline{a_1}, \cdots, \overline{a_n}) = \overline{\Phi(f)(a_1, \cdots, a_n)} = \overline{\Phi(f)(a_1', \cdots, a_n')} = \overline{\Phi}(f)(\overline{a_1'}, \cdots, \overline{a_n'})$$
\end{remark}

The above construction leads directly to the following:

\begin{definition}\label{quot}Let $(A,\Phi)$ be a $\mathcal{C}^{\infty}-$ring and let $R \subseteq A \times A$ be a $\mathcal{C}^{\infty}-$congruence. The \index{quotient $\mathcal{C}^{\infty}-$ring}\textbf{quotient $\mathcal{C}^{\infty}-$ring} of $A$ by $R$ is the ordered pair:

$$\left( \dfrac{A}{R}, \overline{\Phi}\right)$$
where:
$$\dfrac{A}{R}= \{ \overline{a} | a \in A \}$$

and

$$\begin{array}{cccc}
\overline{\Phi}: & \bigcup_{n \in \mathbb{N}}\mathcal{C}^{\infty}\,(\mathbb{R}^n, \mathbb{R}) & \rightarrow & \bigcup_{n \in \mathbb{N}} {\rm Func}\left( \left( \dfrac{A}{R}\right)^n, \dfrac{A}{R}\right)\\
  & (f: \mathbb{R}^n \stackrel{\mathcal{C}^{\infty}}{\rightarrow} \mathbb{R}) & \mapsto & \left(\overline{\Phi}(f): \left( \dfrac{A}{R}\right)^n \rightarrow \dfrac{A}{R}\right)
\end{array}$$

where $\overline{\Phi}(f)$ is described in \textbf{Remark \ref{pele}}.
\end{definition}

The following result shows that the canonical quotient map is, again, a $\mathcal{C}^{\infty}-$homomorphism.\\

\begin{proposition}Let $(A,\Phi)$ be a $\mathcal{C}^{\infty}-$ring and let $R \subseteq A \times A$ be a $\mathcal{C}^{\infty}-$congruence. The function:

$$\begin{array}{cccc}
q: & (A, \Phi) & \rightarrow & \left( \dfrac{A}{R}, \overline{\Phi}\right)\\
 & a & \mapsto & \overline{a}
\end{array}$$

is a $\mathcal{C}^{\infty}-$homomorphism.
\end{proposition}
\begin{proof}Let $n \in \mathbb{N}$ and $f \in \mathcal{C}^{\infty}(\mathbb{R}^n, \mathbb{R})$. We are going to show that the following diagram commutes:

$$\xymatrixcolsep{5pc}\xymatrix{
A^n \ar[r]^{q^{n}} \ar[d]_{\Phi(f)} &  \left( \dfrac{A}{R}\right)^n \ar[d]^{\overline{\Phi}(f)}\\
A \ar[r]_{q} & \left( \dfrac{A}{R}\right)
}$$

Given $(a_1, \cdots, a_n) \in A^n$, we have on the one hand:

$$\overline{\Phi}(f) \circ q^n(a_1, \cdots, a_n) = \overline{\Phi}(f)(q(a_1), \cdots, q(a_n)) = \overline{\Phi}(f)(\overline{a_1}, \cdots, \overline{a_n}):= \overline{\Phi(f)(a_1, \cdots, a_n)}$$

On the other hand,

$$q \circ \Phi(f)(a_1, \cdots, a_n) = \overline{\Phi(f)(a_1, \cdots, a_n)}$$

so $\overline{\Phi} \circ q^n = q \circ \Phi(f)$, and $q$ is a $\mathcal{C}^{\infty}-$homomorphism.
\end{proof}

We remark that the structure given above is the unique $\mathcal{C}^{\infty}-$structure such that the quotient map is a $\mathcal{C}^{\infty}-$homomorphism.\\

\begin{proposition}Let $(A,\Phi)$ and $(B,\Psi)$ be two $\mathcal{C}^{\infty}-$rings and let $\varphi: (A,\Phi) \rightarrow (B, \Psi)$ be a $\mathcal{C}^{\infty}-$homomorphism. The set:

$$\ker (\varphi) = \{ (a,a') \in A \times A | \varphi(a) = \varphi(a')\}$$

is a $\mathcal{C}^{\infty}-$congruence on $(A,\Phi)$.
\end{proposition}
\begin{proof}It is easy to check that $\ker (\varphi)$ is an equivalence relation on $A$.\\

Let $n \in \mathbb{N}$, $f \in \mathcal{C}^{\infty}(\mathbb{R}^n, \mathbb{R})$ and $(a_1,a_1'), \cdots, (a_n,a_n') \in \ker (\varphi)$, that is:
$$(\forall i \in \{1, \cdots, n \})(\varphi(a_i)=\varphi(a_i'))$$

We are going to show that $(\Phi(f)(a_1, \cdots, a_n), \Phi(f)(a_1', \cdots, a_n')) \in \ker (\varphi)$.\\

Since $\varphi$ is a $\mathcal{C}^{\infty}-$homomorphism, we have the following commutative diagram:

$$\xymatrixcolsep{5pc}\xymatrix{
A^n \ar[r]^{\varphi^{n}} \ar[d]_{\Phi(f)} & B^n \ar[d]^{\Psi(f)}\\
A \ar[r]_{\varphi} & B
}$$

thus, we have:

\begin{multline*}\varphi(\Phi(f)(a_1, \cdots, a_n)) = \Psi(f)(\varphi(a_1), \cdots, \varphi(a_n)) = \\
= \Psi(f)(\varphi(a_1'), \cdots, \varphi(a_n')) = \varphi(\Phi(f)(a_1', \cdots, a_n'))\end{multline*}

and $(\Phi(f)(a_1, \cdots, a_n), \Phi(f)(a_1', \cdots, a_n')) \in \ker (\varphi)$.\\
\end{proof}

\begin{corollary}Let $(A,\Phi)$ and $(B,\Psi)$ be two $\mathcal{C}^{\infty}-$rings and let $\varphi: (A,\Phi) \rightarrow (B, \Psi)$ be a $\mathcal{C}^{\infty}-$homo\-morphism. Then $(\ker(\varphi), {\Phi^{(2)}}')$ is a $\mathcal{C}^{\infty}-$subring of $(A\times A, \Phi^{(2)})$.
\end{corollary}

\begin{proposition}For every $\mathcal{C}^{\infty}-$congruence $R \subseteq A \times A$ in $(A,\Phi)$, there are some $\mathcal{C}^{\infty}-$ring $(B,\Psi)$ and some $\mathcal{C}^{\infty}-$homomorphism $\varphi: (A,\Phi) \rightarrow (B,\Psi)$ such that $R = \ker(\varphi)$.
\end{proposition}
\begin{proof}
It suffices to take $(B,\Psi) = \left( \dfrac{A}{R}, \overline{\Phi}\right)$ and $\varphi = q_R: (A,\Phi) \rightarrow \left( \dfrac{A}{R}, \overline{\Phi}\right)$
\end{proof}

\begin{theorem}{\index{{Fundamental Theorem of the $\mathcal{C}^{\infty}-$Homomorphism}}\emph{\textbf{(Fundamental Theorem of the $\mathcal{C}^{\infty}-$Homomorphism)}}}\label{FTH} Let $(A,\Phi)$ be a $\mathcal{C}^{\infty}-$ring and $R \subseteq A \times A$ be a $\mathcal{C}^{\infty}-$congruence. For every $\mathcal{C}^{\infty}-$ring $(B,\Psi)$ and for every  $\mathcal{C}^{\infty}-$homomorphism $\varphi: (A,\Phi) \to (B,\Psi)$ such that $R \subseteq \ker(\varphi)$, that is, such that:

$$(a,a') \in R \Rightarrow \varphi(a) = \varphi(a'),$$

there is a unique $\mathcal{C}^{\infty}-$homomorphism:

$$\widetilde{\varphi}: \left( \dfrac{A}{R}, \overline{\Phi}\right) \rightarrow (B,\Psi)$$

such that the following diagram commutes:

$$\xymatrixcolsep{5pc}\xymatrix{
(A,\Phi) \ar[d]_{q} \ar[r]^{\varphi} & (B,\Psi)\\
\left( \dfrac{A}{R}, \overline{\Phi}\right) \ar@{-->}[ur]_{\widetilde{\varphi}} &
}$$
that is, such that $\widetilde{\varphi} \circ q = \varphi$, where $\overline{\Phi}$ is the canonical $\mathcal{C}^{\infty}-$structure induced on the quotient $\frac{A}{R}$
\end{theorem}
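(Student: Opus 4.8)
The plan is to follow the usual universal-algebra recipe for factoring a homomorphism through a quotient, adapting it to the $\mathcal{C}^{\infty}$-setting. Since the canonical map $q$ is surjective, every element of $\frac{A}{R}$ has the form $\overline{a}$ for some $a \in A$, so the only rule that can possibly satisfy $\widetilde{\varphi}\circ q = \varphi$ is $\widetilde{\varphi}(\overline{a}) := \varphi(a)$. This observation already delivers uniqueness once existence is settled, so the substantive work is to show that this rule defines a function at all and that the resulting function is a $\mathcal{C}^{\infty}$-homomorphism.

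First I would verify that $\widetilde{\varphi}$ is well defined, which is precisely where the hypothesis $R \subseteq \ker(\varphi)$ is used. If $\overline{a} = \overline{a'}$ in $\frac{A}{R}$, then $(a,a') \in R \subseteq \ker(\varphi)$, whence $\varphi(a) = \varphi(a')$; thus the assignment $\overline{a} \mapsto \varphi(a)$ does not depend on the chosen representative and yields a genuine function $\widetilde{\varphi}: \frac{A}{R} \to B$. The commutativity $\widetilde{\varphi}\circ q = \varphi$ is then immediate, since $\widetilde{\varphi}(q(a)) = \widetilde{\varphi}(\overline{a}) = \varphi(a)$ for every $a \in A$.

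Next I would check that $\widetilde{\varphi}$ respects every $\mathcal{C}^{\infty}$-operation symbol. Fixing $n \in \mathbb{N}$, $f \in \mathcal{C}^{\infty}(\mathbb{R}^n,\mathbb{R})$ and $(\overline{a_1}, \ldots, \overline{a_n}) \in \left(\frac{A}{R}\right)^n$, I would unwind the definition of $\overline{\Phi}(f)$ from \textbf{Remark \ref{pele}} and then invoke the fact that $\varphi$ is itself a $\mathcal{C}^{\infty}$-homomorphism:
\begin{multline*}
\widetilde{\varphi}(\overline{\Phi}(f)(\overline{a_1}, \ldots, \overline{a_n})) = \widetilde{\varphi}(\overline{\Phi(f)(a_1, \ldots, a_n)}) = \varphi(\Phi(f)(a_1, \ldots, a_n)) = \\
= \Psi(f)(\varphi(a_1), \ldots, \varphi(a_n)) = \Psi(f)(\widetilde{\varphi}(\overline{a_1}), \ldots, \widetilde{\varphi}(\overline{a_n})).
\end{multline*}
This shows $\widetilde{\varphi}\circ \overline{\Phi}(f) = \Psi(f)\circ \widetilde{\varphi}^{(n)}$, so $\widetilde{\varphi}$ is indeed a $\mathcal{C}^{\infty}$-homomorphism. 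Finally, for uniqueness, any $\mathcal{C}^{\infty}$-homomorphism $\psi$ with $\psi\circ q = \varphi$ must satisfy $\psi(\overline{a}) = \psi(q(a)) = \varphi(a) = \widetilde{\varphi}(\overline{a})$ for all $a \in A$, and since $q$ is surjective this forces $\psi = \widetilde{\varphi}$. The only genuinely delicate point is the well-definedness step, where the containment $R \subseteq \ker(\varphi)$ is indispensable; every other step is a routine diagram chase resting on the definition of $\overline{\Phi}$ and the hypothesis on $\varphi$.
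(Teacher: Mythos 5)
Your proposal is correct and follows essentially the same route as the paper's proof: defining $\widetilde{\varphi}(\overline{a}) := \varphi(a)$, using $R \subseteq \ker(\varphi)$ for well-definedness, verifying compatibility with each $\overline{\Phi}(f)$ via the same computation through $\varphi(\Phi(f)(a_1,\ldots,a_n)) = \Psi(f)(\varphi(a_1),\ldots,\varphi(a_n))$, and deriving uniqueness from the surjectivity of $q$. No gaps.
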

\begin{proof}
Define:
$$\begin{array}{cccc}
\widetilde{\varphi}: & \dfrac{A}{R} & \rightarrow & B\\
 & \overline{a} & \mapsto & \varphi(a)
\end{array}$$

and note that $\widetilde{\varphi}$ defines a function, since given $(a,a')\in R$, i.e., such that $\overline{a}=\overline{a'}$, we have $\varphi(a) = \varphi(a')$, so $\widetilde{\varphi}(\overline{a}) = \varphi(a) = \varphi(a') = \widetilde{\varphi}(\overline{a'})$.\\

As functions, we have $\widetilde{\varphi} \circ q = \varphi$.\\

Given $n \in \mathbb{N}$ and $f \in \mathcal{C}^{\infty}(\mathbb{R}^n, \mathbb{R})$, the following diagram commutes,

$$\xymatrixcolsep{5pc}\xymatrix{
\left( \dfrac{A}{R}\right)^n \ar[r]^{\widetilde{\varphi}^n} \ar[d]_{\overline{\Phi}(f)} & B^n \ar[d]^{\Psi(f)}\\
\dfrac{A}{R} \ar[r]_{\widetilde{\varphi}} & B
}$$

since for any $(\overline{a_1}, \cdots, \overline{a_n}) \in \left( \dfrac{A}{R}\right)^n$ we have:\\

$$\widetilde{\varphi} \circ \overline{\Phi}(f)(\overline{a_1}, \cdots, \overline{a_n}) = \widetilde{\varphi}(\overline{\Phi(f)(a_1, \cdots, a_n)}) := \varphi(\Phi(f)(a_1, \cdots, a_n))$$

and since $\varphi$ is a $\mathcal{C}^{\infty}-$homomorphism, we have:

$$\varphi(\Phi(f)(a_1, \cdots, a_n)) = \Psi(f)(\varphi(a_1), \cdots, \varphi(a_n))$$

thus:

$$\widetilde{\varphi} \circ \overline{\Phi}(f)(\overline{a_1}, \cdots, \overline{a_n}) = \Psi(f)(\varphi(a_1), \cdots, \varphi(a_n)).$$

On the other hand, we have:

$$\Psi(f)\circ \widetilde{\varphi}^n(\overline{a_1}, \cdots, \overline{a_n}) = \Psi(f)(\widetilde{\varphi}(\overline{a_1}), \cdots, \widetilde{\varphi}(\overline{a_n})) = \Psi(f)(\varphi(a_1), \cdots, \varphi(a_n))$$

so

$$(\forall (\overline{a_1}, \cdots, \overline{a_n}) \in \left( \dfrac{A}{R}\right)^n)(\widetilde{\varphi} \circ \overline{\Phi}(f)(\overline{a_1}, \cdots, \overline{a_n}) = \Psi(f)\circ \widetilde{\varphi}^n(\overline{a_1}, \cdots, \overline{a_n})),$$

and $\widetilde{\varphi}$ is a $\mathcal{C}^{\infty}-$homomorphism.\\

Thus we have the following equation of $\mathcal{C}^{\infty}-$homomorphisms:

$$\widetilde{\varphi}\circ q = \varphi.$$

Since $q$ is surjective, it follows that $\widetilde{\varphi}$ is unique.
\end{proof}

The following result is straightforward:

\begin{proposition}\label{H}Let $(A,\Phi)$ and $(B,\Psi)$ be two $\mathcal{C}^{\infty}-$rings and let $\varphi: (A,\Phi) \rightarrow (B,\Psi)$ be a $\mathcal{C}^{\infty}-$homomorphism. The  ordered pair:
$$(\varphi[A], \Psi')$$
where:

$$\begin{array}{cccc}
\Psi': & \bigcup_{n \in \mathbb{N}} \mathcal{C}^{\infty}(\mathbb{R}^n, \mathbb{R}) & \rightarrow & \bigcup_{n \in \mathbb{N}} {\rm Func}\,(\varphi[A]^n, \varphi[A])\\
 & (\mathbb{R}^n \stackrel{f}{\rightarrow} \mathbb{R}) & \mapsto & \Psi(f)\upharpoonright_{\varphi[A]^n}: \varphi[A]^n \rightarrow \varphi[A]
\end{array}$$

is a $\mathcal{C}^{\infty}-$subring of $(B,\Psi)$, called \textbf{the homomorphic image of $A$ by $\varphi$}.
\end{proposition}

\begin{corollary}Let $(A,\Phi)$ and $(B,\Psi)$ be two $\mathcal{C}^{\infty}-$rings and let $\varphi: (A,\Phi) \to (B,\Psi)$ be a $\mathcal{C}^{\infty}-$homomorphism. As we have noticed in \textbf{Proposition \ref{H}}, $(\varphi[A],\Psi')$ is a $\mathcal{C}^{\infty}-$subring of $(B,\Psi)$.\\

Under these circumstances, there is a unique  $\mathcal{C}^{\infty}-$isomorphism:

$$\widetilde{\varphi}: \left( \dfrac{A}{\ker(\varphi)}, \overline{\Phi}\right) \rightarrow (\varphi[A],\Psi')$$

such that the following diagram commutes:

$$\xymatrixcolsep{5pc}\xymatrix{
(A,\Phi) \ar[d]_{q} \ar[r]^{\varphi} & (\varphi[A],\Psi')\\
\left( \dfrac{A}{\ker(\varphi)}, \overline{\Phi}\right) \ar@{-->}[ur]_{\widetilde{\varphi}} &
}$$
that is, such that $\widetilde{\varphi} \circ q = \varphi$, where $\overline{\Phi}$ is the canonical $\mathcal{C}^{\infty}-$structure induced on the quotient $\frac{A}{\ker(\varphi)}$
\end{corollary}
\begin{proof}
Applying the previous result to $R=\ker(\varphi)$ yields the existence of a unique $\mathcal{C}^{\infty}-$homomorphism such that the diagram commutes. We need only to prove that $\widetilde{\varphi}$ is bijective.\\

Given $\overline{a}, \overline{a'}$ such that $\widetilde{\varphi}(\overline{a}) = \widetilde{\varphi}(\overline{a'})$, by definition we have $\varphi(a)=\varphi(a')$, so $(a,a') \in \ker(\varphi)$ and $\overline{a}=\overline{a'}$.\\

Also, given any $y \in \varphi[A]$, there is some $a\in A$ such that $\varphi(a)=y$. Thus, $\widetilde{\varphi}(\overline{a}) = \varphi(a) = y$, so $\varphi$ is surjective.\\
\end{proof}

\subsection{Directed Colimits of $\mathcal{C}^{\infty}-$Rings}\label{DColim}

The following result is going to be used to construct directed colimits of $\mathcal{C}^{\infty}-$rings.\\

\begin{lemma}\label{Gloria}Let $(A,\Phi)$ be a $\mathcal{C}^{\infty}-$ring. The ordered pair:
$$(A \times \{ \alpha\}, \Phi \times {\rm id}_{\alpha})$$

where:

$$\begin{array}{cccc}
\Phi \times {\rm id}_{\alpha} : & \bigcup_{n \in \mathbb{N}} \mathcal{C}^{\infty}(\mathbb{R}^n, \mathbb{R}) & \rightarrow & \bigcup_{n \in \mathbb{N}} {\rm Func}\,((A \times \{ \alpha\})^n, A \times \{ \alpha\})\\
   & (\mathbb{R}^n \stackrel{f}{\rightarrow} \mathbb{R}) & \mapsto & \begin{array}{cccc}\Phi(f) \times {\rm id}_{\alpha}:& (A \times \{ \alpha\})^n & \rightarrow & A \times \{ \alpha\}\\
    & ((a_1, \alpha), \cdots, (a_n, \alpha)) & \mapsto & (\Phi(f)(a_1, \cdots, a_n), \alpha)
   \end{array}
\end{array}$$

is a $\mathcal{C}^{\infty}-$ring and:

$$\begin{array}{cccc}
\pi_1 : & A \times \{ \alpha\} & \rightarrow & A\\
        & (a, \alpha) & \mapsto & a
\end{array}$$
is a $\mathcal{C}^{\infty}-$isomorphism, that is:
$$(A, \Phi) \cong_{\pi_1} (A\times \{ \alpha\}, \Phi \times {\rm id}_{\alpha})$$
\end{lemma}
\begin{proof}
It is clear that $\pi_1$ is a bijection, so it suffices to prove it is a $\mathcal{C}^{\infty}-$homomorphism such that its inverse:

$$\begin{array}{cccc}
\iota_1: & A & \rightarrow & A \times \{ \alpha\}\\
         & a & \mapsto & (a, \alpha)
\end{array}$$

is a $\mathcal{C}^{\infty}-$homomorphism.\\

Let $n \in \mathbb{N}$ and $f \in \mathcal{C}^{\infty}(\mathbb{R}^n, \mathbb{R})$. Given $((a_1,\alpha), \cdots, (a_n, \alpha)) \in (A \times \{ \alpha\})^n$, we have:
$$\pi_1 \circ ((\Phi \times {\rm id}_{\alpha})(f))((a_1, \alpha), \cdots, (a_n, \alpha)) = \pi_1(\Phi(f)(a_1, \cdots, a_n), \alpha) = \Phi(f)(a_1, \cdots, a_n)$$
and
$$\Phi(f) \circ \pi_1^n ((a_1, \alpha), \cdots, (a_n, \alpha)) = \Phi(f)(\pi_1(a_1, \alpha), \cdots, \pi_1(a_n, \alpha)) = \Phi(f)(a_1, \cdots, a_n).$$

Thus, the following diagram commutes:

$$\xymatrixcolsep{5pc}\xymatrix{
(A \times \{ \alpha\})^n \ar[r]^{\pi_1^n} \ar[d]_{(\Phi \times {\rm id}_{\alpha})(f)} & A^n \ar[d]^{\Phi(f)}\\
A \times \{ \alpha\} \ar[r]_{\pi_1} & A
}$$

and $\pi_1$ is a $\mathcal{C}^{\infty}-$homomorphism.\\

Also, for any $(a_1, \cdots, a_n) \in A^n$ we have:

$$(\Phi \times {\rm id}_{\alpha})\circ \iota^n(a_1, \cdots, a_n) = (\Phi \times {\rm id}_{\alpha})((a_1, \alpha), \cdots, (a_n, \alpha)) = (\Phi(f)(a_1, \cdots, a_n), \alpha)$$

and

$$\iota \circ \Phi(f)(a_1, \cdots, a_n) = (\Phi(f)(a_1, \cdots, a_n), \alpha)$$

so the following diagram commutes:

$$\xymatrixcolsep{5pc}\xymatrix{
A^n \ar[r]^{\iota^n} \ar[d]_{\Phi(f)} & (A \times \{ \alpha\})^n \ar[d]^{(\Phi \times {\rm id}_{\alpha})}\\
A \ar[r]_{\iota} & A \times \{ \alpha\}
}$$

and $\iota$ is a $\mathcal{C}^{\infty}-$homomorphism, inverse to $\pi_1$.
\end{proof}

Now we are going to describe the construction of  directed colimits of directed families of $\mathcal{C}^{\infty}-$rings.\\

\begin{theorem}\label{colimite}Let $(I, \leq)$ be a directed set and let $((A_{\alpha}, \Phi_{\alpha}), \mu_{\alpha \beta})_{\alpha, \beta \in I}$ be a directed system. There is an object $(A, \Phi)$ in $\mathcal{C}^{\infty}{\rm \bf Rng}$ such that:

$$(A, \Phi) \cong \varinjlim_{\alpha \in I} (A_{\alpha}, \Phi_{\alpha})$$

\end{theorem}
\begin{proof}
Let:
$$A' := \bigcup_{\alpha \in I} A_{\alpha}\times \{ \alpha\}$$
and consider the following equivalence relation:
$$R = \{ ((a, \alpha), (b, \beta)) \in A' \times A' | (\exists \gamma \in I)(\gamma \geq \alpha, \beta)(\mu_{\alpha \gamma}(a) = \mu_{\beta \gamma}(b))\}$$

Take $A = \dfrac{A'}{R}$ and define the following $\mathcal{C}^{\infty}-$structure on $A$:

$$\begin{array}{cccc}
\Phi: & \bigcup_{n \in \mathbb{N}} \mathcal{C}^{\infty}(\mathbb{R}^n, \mathbb{R}) & \rightarrow & \bigcup_{n \in \mathbb{N}} {\rm Func}\,(A^n, A)\\
  & (\mathbb{R}^n \stackrel{f}{\rightarrow} \mathbb{R}) & \mapsto & (A^n \stackrel{\Phi(f)}{\rightarrow} A)
\end{array}$$

where $\Phi(f): A^n \to A$ is defined as follows: given $(\overline{(a_1, \alpha_1)}, \cdots, \overline{(a_n, \alpha_n)}) \in A^n$, we have $\{ \alpha_1, \cdots, \alpha_n \} \subseteq I$, and since $(I, \leq)$ is directed, there is some $\alpha \in I$ such that:
$$(\forall i \in \{ 1, \cdots, n\})(\alpha_i \leq \alpha )$$

We define:

$$\Phi(f)(\overline{(a_1,\alpha_1)}, \cdots, \overline{(a_n, \alpha_n)}):= \overline{(\Phi_{\alpha}(f)(\mu_{\alpha_1 \alpha}(a_1), \cdots, \mu_{\alpha_n \alpha}(a_n)), \alpha)}
$$

We are going to show that this definition is independent of the choice of $\alpha$.\\

Let $\beta \geq \alpha_1, \cdots, \alpha_n$ so we have:

$$\Phi(f)(\overline{(a_1, \alpha_1)}, \cdots, \overline{(a_n, \alpha_n)}) = \overline{(\Phi_{\beta}(f)(\mu_{\alpha_1 \beta}(a_1), \cdots, \mu_{\alpha_n \beta}(a_n)), \beta)}
$$

We need to show that there is some $\gamma \in I$ such that $\gamma \geq \alpha, \beta$ and:

$$\mu_{\alpha \gamma}(\Phi_{\alpha}(f)(\mu_{\alpha_1 \alpha}(a_i)), \cdots, \mu_{\alpha_n \alpha}(a_n))) = \mu_{\beta \gamma}(\Phi_{\beta}(f)(\mu_{\alpha_1 \beta}(a_i)), \cdots, \mu_{\alpha_n \beta}(a_n)))$$

Choose $\gamma \geq \alpha, \beta$. Then:
$$(\forall i \in \{ 1, \cdots, n\})(\mu_{\beta \gamma} \circ \mu_{\alpha_i \beta} = \mu_{\alpha \beta } \circ \mu_{\alpha_i \alpha})$$

Since $\mu_{\alpha \gamma}: (A_{\alpha}, \Phi_{\alpha}) \rightarrow (A_{\gamma}, \Phi_{\gamma})$ is a $\mathcal{C}^{\infty}-$homomorphism, we have the following commutative diagram:

$$\xymatrixcolsep{5pc}\xymatrix{
A_{\alpha}^n \ar[r]^{\mu_{\alpha \gamma}^n} \ar[d]_{\Phi_{\alpha}(f)} & A_{\gamma}^n \ar[d]^{\Phi_{\gamma}(f)}\\
A_{\alpha} \ar[r]_{\mu_{\alpha \gamma}} & A_{\gamma}
}$$
so

\begin{multline*}\mu_{\alpha \gamma}(\Phi_{\alpha}(f)(\mu_{\alpha_1 \alpha}(a_1)), \cdots, \Phi_{\alpha}(f)(\mu_{\alpha_n \alpha}(a_n))) =\\
=\Phi_{\gamma}(f)(\mu_{\alpha \gamma}(\mu_{\alpha_1 \alpha}(a_1)), \cdots, \mu_{\alpha \gamma}(\mu_{\alpha_n \alpha}(a_n)))
\end{multline*}

and since:

$$(\forall i \in \{1, \cdots, n \})(\mu_{\alpha \gamma}\circ \mu_{\alpha_i \alpha} = \mu_{\alpha_i \gamma})$$
we have:

\begin{equation}\label{Baf}\mu_{\alpha \gamma}(\Phi_{\alpha}(f)(\mu_{\alpha_1 \alpha}(a_1)), \cdots, \Phi_{\alpha}(f)(\mu_{\alpha_n \alpha}(a_n))) = \Phi_{\gamma}(f)(\mu_{\alpha_1 \gamma}(a_1), \cdots, \mu_{\alpha_n \gamma}(a_n))
\end{equation}

Also, since $\mu_{\beta \gamma}: (A_{\beta}, \Phi_{\beta}) \rightarrow (A_{\gamma}, \Phi_{\gamma})$ is a $\mathcal{C}^{\infty}-$homomorphism, we have the following commutative diagram:

$$\xymatrixcolsep{5pc}\xymatrix{
A_{\beta}^n \ar[r]^{\mu_{\beta \gamma}^n} \ar[d]_{\Phi_{\beta}(f)} & A_{\gamma}^n \ar[d]^{\Phi_{\gamma}(f)}\\
A_{\beta} \ar[r]_{\mu_{\beta \gamma}} & A_{\gamma}
}$$
so

\begin{multline*}\mu_{\beta \gamma}(\Phi_{\beta}(f)(\mu_{\alpha_1 \beta}(a_1)), \cdots, \Phi_{\beta}(f)(\mu_{\alpha_n \beta}(a_n))) = \\
=\Phi_{\gamma}(f)(\mu_{\beta \gamma}(\mu_{\alpha_1 \beta}(a_1)), \cdots, \mu_{\beta \gamma}(\mu_{\alpha_n \beta}(a_n)))
\end{multline*}

and since:

$$(\forall i \in \{1, \cdots, n \})(\mu_{\beta \gamma}\circ \mu_{\alpha_i \beta} = \mu_{\alpha_i \gamma})$$
we have:

\begin{equation}\label{Fab}\mu_{\beta \gamma}(\Phi_{\beta}(f)(\mu_{\alpha_1 \beta}(a_1)), \cdots, \Phi_{\beta}(f)(\mu_{\alpha_n \beta}(a_n))) = \Phi_{\gamma}(f)(\mu_{\alpha_1 \gamma}(a_1), \cdots, \mu_{\alpha_n \gamma}(a_n))
\end{equation}

Comparing \eqref{Baf} and \eqref{Fab}, we get:

\begin{multline*}\mu_{\alpha \gamma}(\Phi_{\alpha}(f)(\mu_{\alpha_1 \alpha}(a_1)), \cdots, \Phi_{\alpha}(f)(\mu_{\alpha_n \alpha}(a_n))) =\\
=\mu_{\beta \gamma}(\Phi_{\beta}(f)(\mu_{\alpha_1 \beta}(a_1)), \cdots, \Phi_{\beta}(f)(\mu_{\alpha_n \beta}(a_n)))
\end{multline*}

so

$$(\overline{\Phi_{\alpha}(\mu_{\alpha_1\alpha}(a_1), \cdots, \mu_{\alpha_n \alpha}(a_n))}, \alpha) = \overline{(\Phi_{\beta}(\mu_{\alpha_1 \beta}(a_1), \cdots, \mu_{\alpha_n \beta}(a_n)), \beta)}$$

and $\Phi(f)$ does not depend on the choice of the index. The definition of $\Phi(f)$ does not depend on the choice of the representing elements, $(a_i, \alpha_i)+R$, either\footnote{the proof of this fact is analogous to the one we just made.}.\\

Hence $\Phi$ is a $\mathcal{C}^{\infty}-$structure on $A$ and this shows that $(A,\Phi)$ is a $\mathcal{C}^{\infty}-$ring.\\

For each $\alpha \in I$ we define:

$$\begin{array}{cccc}
\eta_{\alpha}: & A_{\alpha} \times \{ \alpha\} & \rightarrow & A \\
  & (a,\alpha) & \mapsto & \overline{(a, \alpha)}
\end{array}$$

\textbf{Claim:} $\eta_{\alpha}: A_{\alpha} \times \{ \alpha\} \rightarrow A$ is a $\mathcal{C}^{\infty}-$homomorphism between $(A_{\alpha}\times \{ \alpha\}, \Phi_{\alpha} \times {\rm id}_{\alpha})$ and $(A,\Phi)$.\\

Let $n \in \mathbb{N}$ and $f \in \mathcal{C}^{\infty}(\mathbb{R}^n, \mathbb{R})$. Given $((a_1, \alpha), \cdots, (a_n, \alpha)) \in (A_{\alpha} \times \{ \alpha \})^n$. We have:

\begin{multline*}\Phi(f)\circ \eta_{\alpha}^n((a_1, \alpha), \cdots, (a_n, \alpha)) := (\Phi_{\alpha} \times {\rm id}_{\alpha})(f)(\eta_{\alpha}(a_1, \alpha), \cdots, \eta_{\alpha}(a_n, \alpha)) =\\
=\overline{(\Phi_{\alpha}(f)(a_1, \cdots, a_n), \alpha)}
\end{multline*}

and

\begin{multline*}\eta_{\alpha}\circ (\Phi_{\alpha} \times {\rm id}_{\alpha})(f)((a_1, \alpha), \cdots, (a_n, \alpha)) = \eta_{\alpha}(\Phi_{\alpha}(f)(a_1, \cdots, a_n), \alpha) =\\
=\overline{(\Phi_{\alpha}(f)(a_1, \cdots, a_n), \alpha)}
\end{multline*}

so the following diagram commutes:

$$\xymatrixcolsep{5pc}\xymatrix{
(A_{\alpha} \times \{ \alpha\})^n \ar[r]^{\eta_{\alpha}^n} \ar[d]_{(\Phi_{\alpha} \times {\rm id}_{\alpha})(f)} & A^n \ar[d]^{\Phi(f)}\\
A_{\alpha} \times \{ \alpha \} \ar[r]_{\eta_{\alpha}} & A
}$$

Now we take, for each $\alpha \in I$ the following $\mathcal{C}^{\infty}-$homomorphism:

$$\lambda_{\alpha}:= \eta_{\alpha} \circ \iota_{\alpha}: A_{\alpha} \to A$$
where $\iota_{\alpha}: A_{\alpha} \rightarrow A_{\alpha} \times \{ \alpha\}$ is the $\mathcal{C}^{\infty}-$isomorphism described in \textbf{Lemma \ref{Gloria}}.\\

\textbf{Claim:} $(\forall \alpha \in I)(\forall \beta \in I)(\alpha \leq \beta \to \lambda_{\beta} \circ \mu_{\alpha \beta} = \lambda_{\alpha})$, that is, the following diagram commutes:

$$\xymatrixcolsep{5pc}\xymatrix{
 & A & \\
 A_{\alpha} \times \{ \alpha\} \ar[ur]^{\eta_{\alpha}} & & A_{\beta} \times \{ \beta\} \ar[ul]_{\eta_{\beta}}\\
 A_{\alpha} \ar[u]_{\iota_{\alpha}} \ar[rr]_{\mu_{\alpha \beta}} & & A_{\beta} \ar[u]^{\iota_{\beta}}
}$$

Indeed, given $a \in A_{\alpha}$, we have:

$$\lambda_{\beta} \circ \mu_{\alpha \beta}(a) = \eta_{\beta} \circ \iota_{\beta}(\mu_{\alpha \beta}(a)) = \eta_{\beta}(\mu_{\alpha \beta}(a), \beta) = \overline{(\mu_{\alpha \beta}(a), \beta)}$$
and

$$\lambda_{\alpha}(a) = \eta_{\alpha}(\iota_{\alpha}(a)) = \eta_{\alpha}(a, \alpha) = \overline{(a, \alpha)}$$

Given any $\gamma \geq \alpha, \beta$, we have $\mu_{\alpha \gamma} = \mu_{\beta \gamma} \circ \mu_{\alpha \beta}$, so:
$$\mu_{\alpha \gamma}(a) = \mu_{\beta \gamma}(\mu_{\alpha \beta}(a))$$
and $((a, \alpha), (\mu_{\alpha \beta}(a), \beta)) \in R$, hence $\lambda_{\beta} \circ \mu_{\alpha \beta}(a) = \lambda_{\alpha}(a)$, and the diagram commutes.\\

Now we need only to show that $((A,\Phi), \lambda_{\alpha})$ has the universal property of the colimit.\\

Let $(B,\Psi)$ be a $\mathcal{C}^{\infty}-$ring and let $\zeta_{\alpha}: A_{\alpha} \to B$ be a family of $\mathcal{C}^{\infty}-$homo\-morphisms such that for every $\alpha, \beta \in I$ with $\alpha \leq \beta$ we have $\zeta_{\beta} \circ \mu_{\alpha \beta} = \zeta_{\alpha}$, that is, the diagram:

$$\xymatrixcolsep{5pc}\xymatrix{
 & B & \\
A_{\alpha} \ar[ur]^{\zeta_{\alpha}} \ar[rr]_{\mu_{\alpha \beta}} & & A_{\beta} \ar[ul]_{\zeta_{\beta}}
}$$

commutes. We are going to show that there is a unique $\mathcal{C}^{\infty}-$homomorphism $\theta: A \to B$ such that the following diagram commutes:

$$\xymatrixcolsep{5pc}\xymatrix{
  & B & \\
  & A \ar@{-->}[u]^{\exists ! \theta} & \\
 A_{\alpha} \ar[ur]_{\lambda_{\alpha}} \ar@/^2pc/[uur]^{\zeta_{\alpha}} \ar[rr]_{\mu_{\alpha \beta}} & & A_{\beta} \ar[ul]^{\lambda_{\beta}} \ar@/_2pc/[uul]_{\zeta_{\beta}}
}$$

Indeed, given any $\overline{(a, \alpha)} \in A$, we have $(a, \alpha) + R = \lambda_{\alpha}(a) = \lambda_{\beta}(\mu_{\alpha \beta}(a))$, so we define:

$$\begin{array}{cccc}
\theta: & A & \rightarrow & B \\
        & \overline{(a, \alpha)} & \mapsto & \zeta_{\alpha}(a)
\end{array}$$

which is a function, since $(\zeta_{\alpha})_{\alpha \in I}$ is a commutative co-cone.\\

\textbf{Claim:} $\theta: (A,\Phi) \to (B, \Psi)$ is a $\mathcal{C}^{\infty}-$homomorphism.\\

Let $n \in \mathbb{N}$ and $f \in \mathcal{C}^{\infty}(\mathbb{R}^n, \mathbb{R})$. Given $(\overline{(a_1, \alpha_1)}, \cdots, \overline{(a_n, \alpha_n)}) \in A^n$, we have, by definition,

$$\Phi(f)(\overline{(a_1, \alpha_1)}, \cdots, \overline{(a_n, \alpha_n)}) = \overline{(\Phi_{\alpha}(f)(a_1, \cdots, a_n), \alpha)}$$
for every $\alpha \in I$ such that $\alpha \geq \alpha_i$, for every $i \in \{ 1, \cdots, n\}$. Thus:

\begin{multline*}\theta \circ \Phi(f)(\overline{(a_1, \alpha_1)}, \cdots, \overline{(a_n, \alpha_n)}) = \theta(\overline{(\Phi_{\alpha}(f)(\mu_{\alpha_1 \alpha}(a_1), \cdots, \mu_{\alpha_n \alpha}(a_n)), \alpha)}) =\\
=\zeta_{\alpha}(\Phi_{\alpha}(f)(\mu_{\alpha_1 \alpha}(a_1), \cdots, \mu_{\alpha_n \alpha}(a_n)))
\end{multline*}

On the other hand,

\begin{multline*}\Psi(f) \circ \theta^n((a_1, \alpha_1)+R, \cdots, (a_n, \alpha_n)+R) = \Psi(f)(\zeta_{\alpha_1}(a_1), \cdots, \zeta_{\alpha_n}(a_n)) =\\
=\Psi(f)(\zeta_{\alpha}(\mu_{\alpha_1 \alpha}(a_1)), \cdots, \zeta_{\alpha}(\mu_{\alpha_n \alpha}(a_n)))
\end{multline*}

Since for every $ \alpha \in I$, $\zeta_{\alpha}: A_{\alpha} \to B$ is a $\mathcal{C}^{\infty}-$homomorphism, it follows that:

$$\zeta_{\alpha}(\Phi_{\alpha}(f)(\mu_{\alpha_1 \alpha}(a_1), \cdots, \mu_{\alpha_n \alpha}(a_n))) = \Psi(f)(\zeta_{\alpha}(\mu_{\alpha_1 \alpha}(a_1)), \cdots, \zeta_{\alpha}(\mu_{\alpha_n \alpha}(a_n)))$$

so $\theta$ is a $\mathcal{C}^{\infty}-$homomorphism.\\

Now, given $a \in A_{\alpha}$, we have:

$$\theta(\lambda_{\alpha}(a)) = \theta(\overline{(a, \alpha)}) = \zeta_{\alpha}(a)$$

so the required diagram commutes.\\

Since $A = \bigcup_{\alpha \in I}\lambda_{\alpha}[A_{\alpha}]$, $\theta$ is uniquely determined, and the result follows.
\end{proof}

\begin{theorem}\label{projlim}Given any small category $\mathcal{J}$ and any diagram:

$$\begin{array}{cccc}
D: & \mathcal{J} & \rightarrow & \mathcal{C}^{\infty}{\rm \bf Rng}\\
  & (\alpha \stackrel{h}{\rightarrow} \beta) & \mapsto & (A_{\alpha}, \Phi_{\alpha}) \stackrel{D(h)}{\rightarrow} (A_{\beta},\Phi_{\beta})
\end{array}$$

there is a $\mathcal{C}^{\infty}-$ring $(A,\Phi)$ such that:
$$(A,\Phi) \cong \varprojlim_{\alpha \in I} D(\alpha)$$
\end{theorem}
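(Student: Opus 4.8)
The plan is to realize the projective limit as a $\mathcal{C}^{\infty}-$subring of a suitable product $\mathcal{C}^{\infty}-$ring, exploiting the fact that products are already available (\textbf{Definition \ref{produto}}) and that, the theory being equational, $\mathcal{C}^{\infty}{\rm \bf Rng}$ is closed under $\mathcal{C}^{\infty}-$subrings (\textbf{Definition \ref{sub}}). Since $\mathcal{J}$ is small, its class of objects is a set $\Lambda := {\rm Ob}(\mathcal{J})$, so I may form the product $\left( \prod_{\alpha \in \Lambda} A_{\alpha}, \Phi^{(\Lambda)}\right)$. Inside it I would single out the set of ``compatible families''
$$A := \left\{ (a_{\alpha})_{\alpha \in \Lambda} \in \prod_{\alpha \in \Lambda} A_{\alpha} \,\middle|\, (\forall (h: \alpha \to \beta) \in {\rm Mor}(\mathcal{J}))(D(h)(a_{\alpha}) = a_{\beta}) \right\},$$
and equip it with the restriction $\Phi$ of $\Phi^{(\Lambda)}$.

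The first key step is to verify that $(A,\Phi)$ is a $\mathcal{C}^{\infty}-$subring of the product, \textit{i.e.}, that $A$ is closed under every $\Phi^{(\Lambda)}(f)$. Given $f \in \mathcal{C}^{\infty}(\R^n, \R)$ and families $(a^1_{\alpha})_{\alpha}, \cdots, (a^n_{\alpha})_{\alpha} \in A$, I must show that the family $\Phi^{(\Lambda)}(f)$ produces, whose $\alpha$-coordinate is $\Phi_{\alpha}(f)(a^1_{\alpha}, \cdots, a^n_{\alpha})$, again lies in $A$. For each morphism $h: \alpha \to \beta$ this amounts to the identity $D(h)(\Phi_{\alpha}(f)(a^1_{\alpha}, \cdots, a^n_{\alpha})) = \Phi_{\beta}(f)(a^1_{\beta}, \cdots, a^n_{\beta})$, which holds precisely because $D(h)$ is a $\mathcal{C}^{\infty}-$homomorphism, hence commutes with the interpretation of $f$, combined with the defining hypothesis $D(h)(a^i_{\alpha}) = a^i_{\beta}$ for each $i$. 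This is the crux of the argument and the one place where the universal-algebraic nature of the morphisms is essential. Equivalently, one may present $A$ as the intersection, over all $h \in {\rm Mor}(\mathcal{J})$, of the equalizer-type subrings $\{(a_{\alpha})_{\alpha} \mid D(h)(a_{\alpha}) = a_{\beta}\}$ and invoke \textbf{Proposition \ref{Zimbabue}}.

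Next I would define the projections $\lambda_{\alpha} := \pi_{\alpha}\!\upharpoonright_{A} : A \to A_{\alpha}$ as restrictions of the canonical product projections; each is a $\mathcal{C}^{\infty}-$homomorphism, and the condition defining $A$ says exactly that $D(h) \circ \lambda_{\alpha} = \lambda_{\beta}$ for every $h: \alpha \to \beta$, so $((A,\Phi), (\lambda_{\alpha})_{\alpha})$ is a cone over $D$.

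Finally, for the universal property, given any $\mathcal{C}^{\infty}-$ring $(B,\Psi)$ and any cone $(\zeta_{\alpha}: B \to A_{\alpha})_{\alpha}$ with $D(h) \circ \zeta_{\alpha} = \zeta_{\beta}$ for all $h: \alpha \to \beta$, I would set $\theta: B \to A$, $\theta(b) = (\zeta_{\alpha}(b))_{\alpha}$. The cone equations guarantee that $\theta(b)$ actually lands in $A$; $\theta$ is a $\mathcal{C}^{\infty}-$homomorphism because it is one coordinatewise into the product structure (each $\zeta_{\alpha}$ being a homomorphism); and $\lambda_{\alpha} \circ \theta = \zeta_{\alpha}$ holds by construction. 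Uniqueness follows since two elements of $A$ agreeing in every coordinate coincide, so the $\lambda_{\alpha}$ are jointly monic and any map compatible with the cone must equal $\theta$. The main obstacle is the closure computation in the second step; the remaining verifications are routine diagram-chasing once closure is secured.
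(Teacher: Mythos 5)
Your proposal is correct and follows essentially the same route as the paper's own proof, which likewise realizes $\varprojlim D$ as the $\mathcal{C}^{\infty}-$subring of compatible families inside the product $\left(\prod_{\alpha} A_{\alpha}, \Phi^{(\Lambda)}\right)$ with the restricted projections $\pi_{\alpha}\upharpoonright_{A}$. In fact the paper only sketches this construction, whereas you supply the verifications it leaves implicit (closure of $A$ under every $\Phi^{(\Lambda)}(f)$ via the homomorphism property of $D(h)$, the cone equations, and the universal property), so your write-up is a more complete version of the same argument.
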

\begin{proof}
Consider the product $\mathcal{C}^{\infty}-$ring:
$$\left(\displaystyle\prod_{i \in I} A_{i}, \Phi^{(I)}\right)$$

and take:

$$A = \{ (a_{\alpha})_{\alpha \in I} \in \displaystyle\prod_{\alpha \in I} A_{\alpha} | (\forall \alpha, \beta \in I)(\forall \alpha \stackrel{h}{\rightarrow} \beta)( D(h)(a_{\alpha}) =a_{\beta})\} \subseteq \displaystyle\prod_{\alpha \in I} A_{\alpha}$$

together with the $\mathcal{C}^{\infty}-$subring structure $\Phi'$ of $\Phi^{(I)}$.\\

We have $((A,\Phi'), \pi_{\alpha}\upharpoonright_{A}: A \to A_{\alpha}) \cong \varprojlim_{\alpha \in I} (A_{\alpha}, \Phi_{\alpha})$
\end{proof}

\begin{remark}
Let $\Sigma = \bigcup_{n \in \mathbb{N}}\mathcal{C}^{\infty}(\mathbb{R}^n, \mathbb{R})$ and let $X = \{ x_1, x_2, \cdots, x_n, \cdots\}$ be a denumerable set of variables, so $F(\Sigma, X)$ will denote the algebra of terms of this language $\Sigma$. A class of \textbf{ordered pairs} will be simply a subset $S \subseteq F(\Sigma, X) \times F(\Sigma, X)$. In our case, these pairs are given by the axioms, so $S$ consists of the following:\\

$\bullet$ For any $n \in \mathbb{R}$, $i \leq n$ and a (smooth) projection map $p_i : \mathbb{R}^n \to \mathbb{R}$ we have:\\

$$(p_i(x_1, \cdots, x_i, \cdots, x_n), x_i) \in S$$

$\bullet$ for every $f, g_1, \cdots, g_n \in \mathcal{C}^{\infty}(\mathbb{R}^m, \mathbb{R})$ and $h \in \mathcal{C}^{\infty}(\mathbb{R}^n, \mathbb{R})$ such that $f = h \circ (g_1, \cdots, g_n)$, we have

$$(h(g_1(x_1, \cdots, x_m), \cdots, g_n(x_1, \cdots, x_m)),f(x_1, \cdots, x_m)) \in S$$
\end{remark}

\begin{remark}\label{Catarina}The class of $\mathcal{C}^{\infty}-$rings is a model of an equational theory, thus it is a variety of algebras. However, if we were not given this information,  noting that the category of $\mathcal{C}^{\infty}-$rings is closed under products, subalgebras and homomorphic images,  the \textbf{HSP Birkhoff's Theorem} would lead us to the same conclusion, that is, that the class $\mathcal{C}^{\infty}{\rm \bf Rng}$ is a variety of algebras, and by the previous remark, $\mathcal{C}^{\infty}{\rm \bf Rng} = V(S)$, the variety of algebras defined by $S$.

In particular, we have some classical results. We list some of them:\\

$\bullet$ for every set $X$ there is a free $\mathcal{C}^{\infty}-$ring determined by $X$;\\

$\bullet$ any $\mathcal{C}^{\infty}-$ring is a homomorphic image of some free $\mathcal{C}^{\infty}-$ring;\\

$\bullet$ a $\mathcal{C}^{\infty}-$homomorphism is monic if, and only if, it is an injective map;\\

$\bullet$ any indexed set of $\mathcal{C}^{\infty}-$rings, $\{ (A_{\alpha}, \Phi_{\alpha}) | \alpha \in I\}$, has a coproduct in $\mathcal{C}^{\infty}{\rm \bf Rng}$.\\

\end{remark}

We end this section by proving that the (variety) of all $\mathcal{C}^{\infty}-$rings is a reflective subcategory of $\mathcal{C}^{\infty}{\rm \bf Str}$.\\

\begin{theorem} \label{adjQV-le}The inclusion functor $\iota: \mathcal{C}^{\infty}{\rm \bf Rng} \hookrightarrow \mathcal{C}^{\infty}{\rm \bf Str}$ has a left adjoint $L: \mathcal{C}^{\infty}{\rm \bf Str} \to \mathcal{C}^{\infty}{\rm \bf Rng}$: given by $M \mapsto M/\theta_M$ where $\theta_M$ is the least $\mathcal{C}^{\infty}$-congruence of $M$ such that $M/\theta_M \in {\rm Obj}(\mathcal{C}^{\infty}{\rm \bf Rng})$. Moreover, the unit of the adjunction $L \dashv \iota$ has components $(q_M)_{M \in {\rm Obj}(\mathcal{C}^{\infty}{\rm \bf Str})}$, where $q_M : M \twoheadrightarrow M/\theta_M$ is the quotient homomorphism.
\end{theorem}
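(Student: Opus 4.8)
The plan is to show that, for each $\mathcal{C}^{\infty}$-structure $M=(M,\Phi)$, the quotient map $q_M\colon M\twoheadrightarrow M/\theta_M$ is a \emph{universal arrow} from $M$ to the inclusion $\iota$; by the standard characterization of adjoints via universal arrows this simultaneously produces the functor $L$, the adjunction $L\dashv\iota$, and the unit $(q_M)$. Everything rests on pinning down $\theta_M$ explicitly. The point is that $\mathcal{C}^{\infty}{\rm \bf Rng}$ is exactly the subclass of $\mathcal{C}^{\infty}{\rm \bf Str}$ cut out by the equations (E1) and (E2). For a fixed $M$, let $S_M\subseteq M\times M$ be the set of all \emph{instances} of these equations in $M$: all pairs $(\Phi(p_k)(a_1,\dots,a_n),a_k)$ with $k\le n$ and $(a_1,\dots,a_n)\in M^n$, together with all pairs $(\Phi(h)(\vec a),\,\Phi(f)(\Phi(g_1)(\vec a),\dots,\Phi(g_n)(\vec a)))$ whenever $h=f\circ(g_1,\dots,g_n)$ and $\vec a\in M^k$.

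The first step is the equivalence: for any $\mathcal{C}^{\infty}$-congruence $\theta$ on $M$, the quotient $M/\theta$ (whose structure $\overline{\Phi}$ is well defined by the construction of \textbf{Remark \ref{pele}}, which uses only that $\theta$ is a $\mathcal{C}^{\infty}$-congruence, not that $M$ is a ring) is a $\mathcal{C}^{\infty}$-ring if and only if $S_M\subseteq\theta$. Indeed, since $\overline{\Phi}(f)(\overline{a_1},\dots,\overline{a_n})=\overline{\Phi(f)(a_1,\dots,a_n)}$, the projection axiom for $M/\theta$ reads $\overline{\Phi(p_k)(\vec a)}=\overline{a_k}$ for all $\vec a$, and the composition axiom reads $\overline{\Phi(h)(\vec a)}=\overline{\Phi(f)(\dots)}$; these say precisely that the corresponding pairs lie in $\theta$. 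Next, intersections of $\mathcal{C}^{\infty}$-congruences are again $\mathcal{C}^{\infty}$-congruences (the intersection is an equivalence relation, and preservation of each function symbol is inherited exactly as in the proof of \textbf{Proposition \ref{Zimbabue}}). Since the total relation $M\times M$ is a $\mathcal{C}^{\infty}$-congruence containing $S_M$ whose quotient is the one-point ring, the family of $\mathcal{C}^{\infty}$-congruences containing $S_M$ is nonempty, and I set $\theta_M$ to be their intersection. Then $\theta_M$ is a $\mathcal{C}^{\infty}$-congruence with $S_M\subseteq\theta_M$, so $M/\theta_M$ is a $\mathcal{C}^{\infty}$-ring by the equivalence, and by construction $\theta_M$ is the least such congruence.

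The second step is the universal property. Let $(B,\Psi)$ be a $\mathcal{C}^{\infty}$-ring and $\varphi\colon M\to\iota(B)$ a $\mathcal{C}^{\infty}$-morphism. The argument that a kernel is a $\mathcal{C}^{\infty}$-congruence uses only that $\varphi$ commutes with the operations, so $\ker(\varphi)$ is a $\mathcal{C}^{\infty}$-congruence on $M$. Moreover $S_M\subseteq\ker(\varphi)$: applying $\varphi$ to a projection instance and using $\varphi\circ\Phi(p_k)=\Psi(p_k)\circ\varphi^{(n)}=\pi_k\circ\varphi^{(n)}$, where the last equality holds because $B$ is a ring, gives $\varphi(\Phi(p_k)(\vec a))=\varphi(a_k)$; the analogous computation with $\Psi(h)=\Psi(f)\circ(\Psi(g_1),\dots,\Psi(g_n))$ handles the composition instances. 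Hence $\theta_M\subseteq\ker(\varphi)$, and the Fundamental Theorem of the $\mathcal{C}^{\infty}$-Homomorphism (\textbf{Theorem \ref{FTH}}, whose proof is insensitive to whether the domain is a ring or merely a structure) yields a unique $\mathcal{C}^{\infty}$-homomorphism $\widetilde{\varphi}\colon M/\theta_M\to B$ with $\widetilde{\varphi}\circ q_M=\varphi$. Because $\mathcal{C}^{\infty}{\rm \bf Rng}$ is full in $\mathcal{C}^{\infty}{\rm \bf Str}$, this $\widetilde{\varphi}$ is exactly an arrow $L(M)\to B$ in $\mathcal{C}^{\infty}{\rm \bf Rng}$, so $q_M$ is the required universal arrow. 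The action of $L$ on morphisms and the naturality of the unit then follow automatically from the uniqueness clause, establishing $L\dashv\iota$.

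I expect the main obstacle to be the first step, and specifically the reduction \emph{$M/\theta$ is a ring $\iff S_M\subseteq\theta$}: this is what converts the a priori opaque ``least congruence with ring quotient'' into the concrete, manifestly existing congruence generated by $S_M$, and it is where the equational description of $\mathcal{C}^{\infty}{\rm \bf Rng}$ is really used. Once this reduction and the closure of $\mathcal{C}^{\infty}$-congruences under intersection are in place, the universal property is a routine diagram chase, the only mild subtlety being the remark that the quotient construction and \textbf{Theorem \ref{FTH}}, though stated for rings, depend only on $M$ being a $\mathcal{C}^{\infty}$-structure.
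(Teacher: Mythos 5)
Your proof is correct, but it takes a genuinely different route from the paper's. Where you reduce everything to syntax, the paper argues semantically: it defines $\theta_M$ as the intersection of the family $\Gamma_M$ of \emph{all} congruences $\theta$ with $M/\theta \in {\rm Obj}(\mathcal{C}^{\infty}{\rm \bf Rng})$, and then must prove the non-obvious fact that this intersection itself lies in $\Gamma_M$; it does so via the diagonal morphism $\delta_M : M \to \prod_{\theta \in \Gamma_M} M/\theta$, the computation $\ker(\delta_M) = \theta_M$, the induced monomorphism $\bar{\delta}_M : M/\theta_M \rightarrowtail \prod_{\theta \in \Gamma_M} M/\theta$, and the closure of $\mathcal{C}^{\infty}{\rm \bf Rng}$ under products, substructures and isomorphisms --- an HSP-style argument that would work for any class of structures closed under $S$ and $P$, with no equational axiomatization needed. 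You instead exploit the equational presentation (E1)--(E2) directly: your characterization ``$M/\theta$ is a $\mathcal{C}^{\infty}$-ring iff $S_M \subseteq \theta$'' (which is correct, by surjectivity of $q$ and the formula $\overline{\Phi}(f)(\overline{a_1},\dots,\overline{a_n}) = \overline{\Phi(f)(a_1,\dots,a_n)}$) turns $\theta_M$ into the congruence \emph{generated} by the concrete set $S_M$ of equation instances, so that both its existence and its leastness, and the fact that $M/\theta_M$ is a ring, become immediate --- no product or embedding argument is required. The two proofs converge in the second half: both verify $\theta_M \subseteq \ker(\varphi)$ (the paper via $\ker(\varphi) \in \Gamma_M$ using closure under substructures, you via the direct computation $S_M \subseteq \ker(\varphi)$ using that the codomain satisfies the axioms) and then invoke the homomorphism theorem. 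Your explicit remark that \textbf{Remark \ref{pele}}, the kernel-is-a-congruence proposition, and \textbf{Theorem \ref{FTH}} apply verbatim when the domain is merely a $\mathcal{C}^{\infty}$-structure is a point the paper glosses over by citing ``the homomorphism theorem for $\mathcal{C}^{\infty}{\rm \bf Str}$'' without proof, so flagging it is a genuine improvement in rigor; what the paper's route buys in exchange is generality (it is the standard proof that any SP-closed subclass is reflective), while yours buys an explicit bottom-up description of $\theta_M$.
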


\begin{proof}
Let $(M, \mu)$ be any $\mathcal{C}^{\infty}-$structure and let $|M|$ denote the underlying set of $(M,\mu)$.\\

Consider $\Gamma_{M}=\{\theta\subseteq|M|\times|M|;$ it is a congruence relation and $M/\theta \in {\rm Obj}(\mathcal{C}^{\infty}{\rm \bf Rng})\}$. $\Gamma$ is non-empty, since $\theta=|M|\times|M|$ is a congruence relation and $M/\theta = \{ *\} \cong \mathcal{C}^{\infty}(\varnothing) \in {\rm Obj}(\mathcal{C}^{\infty}{\rm \bf Rng})$. Let $\theta_{M}=\bigcap\Gamma_{M}$.  We will show first that $\theta_{M}\in \Gamma_{M}$. Since  $\theta_M$ is a $\mathcal{C}^{\infty}$-congruence in $M$, it remains to check that $M/\theta_M \in {\rm Obj}(\mathcal{C}^{\infty}{\rm \bf Rng})$.\\


Consider the {\em diagonal} $\mathcal{C}^{\infty}$-morphism:

$$\begin{array}{cccc}
  \delta_M :  & M & \rightarrow  & \prod_{\theta\in\Gamma_{M}}M/\theta\\
  & m & \mapsto & ([m]_{\theta})_{\theta\in\Gamma_{M}}
\end{array}$$


We have $(m,n)\in \ker(\delta_M)\Leftrightarrow ([m]_{\theta})_{\theta\in \Gamma_{M}}=([n]_{\theta})_{\theta\in \Gamma_{M}}\Leftrightarrow (\forall\ \theta\in\Gamma_{M})([m]_{\theta}=[n]_{\theta}) \Leftrightarrow (\forall\ \theta\in\Gamma_{M})(m\theta n)\ \Leftrightarrow m\theta_{M} n$.\\

Thus, by the {\em homomorphism theorem} for $\mathcal{C}^{\infty}{\rm \bf Str}$, there is a unique $\mathcal{C}^{\infty}$-\underline{mono}morphism $\bar{\delta}_M : M/\theta_M \rightarrowtail \prod_{\theta\in\Gamma_{M}}M/\theta$ such the diagram below commutes
\[\xymatrix{
M\ar[r]^(.3){\delta_M}\ar[d]_{q_M}&\prod_{\theta\in\Gamma_{M}}M/\theta\\
M/\theta_{M}\ar[ur]_{\bar{\delta}_M}
}\]

 Since $\mathcal{C}^{\infty}{\rm \bf Rng}$ is closed under products, we have that $\prod_{\theta\in\Gamma_{M}}M/\theta \in {\rm Obj}(\mathcal{C}^{\infty}{\rm \bf Rng})$. We also have that $\mathcal{C}^{\infty}{\rm \bf Rng}$ is closed under substructures and isomorphisms, then $M/\theta_{M}$ is ${\rm Obj}(\mathcal{C}^{\infty}{\rm \bf Rng})$.\\

Denote by $L(M) := M/\theta_M$. We show that $q_M : M \twoheadrightarrow \iota(L(M))$ satisfies the universal property relatively to $\mathcal{C}^{\infty}$-homomorphisms $f : M \rightarrow \iota(N)$, with $N \in {\rm Obj}(\mathcal{C}^{\infty}{\rm \bf Rng})$.\\

Thus, we obtain an injective $\mathcal{C}^{\infty}$-morphism $\bar{f} : M/\ker(f) \rightarrowtail \iota(N)$. Since $\mathcal{C}^{\infty}{\rm \bf Rng}$ is closed under substructures and isomorphisms, we have that $M/\ker(f) \in {\rm Obj}(\mathcal{C}^{\infty}{\rm \bf Rng})$. Hence $\ker(f)\in\Gamma_{M}$ and $\theta_M \subseteq \ker(f)$. Then, again by the homomorphism theorem, there is a unique homomorphism $\tilde{f} : M/\theta_M \rightarrow N$ such that the following diagram commutes

\[\xymatrix{
M \ar[r]^{q_M}\ar[dr]_{f}&\iota(L(M))\ar[d]^{\iota(\tilde{f})}\\
&\iota(N)
}\]
\end{proof}

\section{Free $\mathcal{C}^{\infty}-$Rings}\label{Petrus}

Let $E$ be any set. We are going to describe the \textbf{free} $\mathcal{C}^{\infty}-$ring determined by $E$.\\

For any set $E$ (finite or infinite), we set:

$$\mathbb{R}^{E}:= \{ f: E \rightarrow \mathbb{R} | f \,\, \mbox{is}\,\, \mbox{a}\,\,\, \mbox{function}\}$$

and

$${\rm Func}\,(\mathbb{R}^{E}, \mathbb{R}) = \{ g: \mathbb{R}^{E} \rightarrow \mathbb{R} | g \,\, \mbox{is}\,\, \mbox{a}\,\,\, \mbox{function}\}.$$

Given any two finite subsets of $E$, $E_i,E_j \subseteq E$, whenever $E_i \subseteq E_j$ we have the restriction map:

$$\begin{array}{cccc}
\mu_{i,j}: & \mathbb{R}^{E_j} & \twoheadrightarrow & \mathbb{R}^{E_i}\\
         & f & \mapsto & f \circ \imath_{i,j} : E_i \rightarrow \mathbb{R}
\end{array}$$

where

$$\begin{array}{cccc}
\imath_{i,j}: & E_i & \hookrightarrow & E_j\\
 & x & \mapsto & x
\end{array}$$

is the inclusion map of $E_i$ into $E_j$, and

$$\begin{array}{cccc}
\iota_{i}: & E_i & \hookrightarrow & E\\
 & x & \mapsto & x
\end{array}$$

is the inclusion map of $E_i$ into $E$.\\

We also have the injective pullback (defined by ``composition'') associated with $\mu_{i,j}$, namely:

$$\begin{array}{cccc}
\widehat{\mu}_{i,j}:& {\rm Func}\,(\mathbb{R}^{E_i}, \mathbb{R}) & \rightarrowtail & {\rm Func}\,(\mathbb{R}^{E_j}, \mathbb{R})\\
                    & (\xymatrix{\mathbb{R}^{E_i} \ar[r]^{g} & \mathbb{R}}) & \mapsto & (\xymatrix{\mathbb{R}^{E_j} \ar[r]^{g \circ \mu_{i,j}} & \mathbb{R}})
\end{array}$$




Analogously that  for each finite subset $E_i$ of $E$ we have the injective map  $\iota_{E_i,E}: {\rm Func}\,(\mathbb{R}^{E_i}, \mathbb{R}) \to {\rm Func}\,(\mathbb{R}^{E}, \mathbb{R})$ given by

$$\begin{array}{cccc}
    \iota_{E_i,E}: & {\rm Func}\,(\mathbb{R}^{E_i}, \mathbb{R}) & \rightarrow & {\rm Func}\,(\mathbb{R}^E, \mathbb{R}) \\
     & (\xymatrix{ \mathbb{R}^{E_i} \ar[r]^{f} & \mathbb{R}}) & \mapsto & \begin{array}{cccc}
                                                                            \widetilde{f}: & \mathbb{R}^E & \rightarrow & \mathbb{R} \\
                                                                             & g & \mapsto & f(g\circ \iota_i: \mathbb{R}^{E_i} \rightarrow \mathbb{R})                             \end{array}
  \end{array}$$

Given $f \in {\rm Func}\,(\mathbb{R}^{E_i}, \mathbb{R})$, on the one hand we have:

$$\iota_{E_j,E}\circ \widehat{\mu_{i,j}}(f) = \iota_{E_j,E}(f\circ \mu_{i,j}) = \widetilde{f \circ \mu_{i,j}},$$

where

$$\begin{array}{cccc}
   \widetilde{f \circ \mu_{i,j}}: & \mathbb{R}^{E} & \rightarrow & \mathbb{R}\\
   & g & \mapsto & f \circ \mu_{i,j}(g \circ \iota_j) = f \circ (g \circ \iota_j \circ \imath_{i,j}) = f(g \circ \iota_i)
\end{array}$$

and on the other hand,

$$\begin{array}{cccc}
   \widetilde{f}: & \mathbb{R}^{E} & \rightarrow & \mathbb{R}\\
   & g & \mapsto & f(g \circ \iota_i)
\end{array}$$

so $(\forall f \in {\rm Func}\,(\mathbb{R}^{E_i}, \mathbb{R}))(\iota_{E_j,E}\circ \widehat{\mu_{i,j}}(f) = \iota_{E_i,E}(f))$.\\

Thus, for every $E_i,E_j$ such that $E_i \subseteq E_j$, the following diagram commutes:

\begin{equation}\label{Icaro}\xymatrixcolsep{5pc}\xymatrix{
{\rm Func}\,(\mathbb{R}^{E_i}, \mathbb{R}) \ar@/_/[dr]_{\iota_{E_i,E}} \ar@{>->}[rr]^{\widehat{\mu}_{i,j}} & & {\rm Func}\,(\mathbb{R}^{E_j}, \mathbb{R}) \ar@/^/[dl]^{\iota_{E_j,E}}\\
  & {\rm Func}\,(\mathbb{R}^E, \mathbb{R})&
}
\end{equation}

that is,

$$\iota_{E_j,E} \circ \widehat{\mu}_{i,j} = \iota_{E_i,E}.$$

Consider the following commutative diagram:

$$\xymatrixcolsep{5pc}\xymatrix{
{\rm Func}\,(\mathbb{R}^{E_i}, \mathbb{R}) \ar@{>->}[dr]_{\widehat{\mu}_i} \ar@{>->}[rr]^{\widehat{\mu}_{i,j}}& & {\rm Func}\,(\mathbb{R}^{E_j}, \mathbb{R}) \ar@{>->}[dl]^{\widehat{\mu}_j}\\
 & \varinjlim_{E' \subseteq_f E}{\rm Func}\,(\mathbb{R}^{E'}, \mathbb{R}) &
}$$

which is the  colimit diagram of the directed system $({\rm Func}\,(\mathbb{R}^{E_i}, \mathbb{R}), \widehat{\mu}_{i,j})$.\\

Recall that, concretely we have:

$$\varinjlim_{E' \subseteq_f E}{\rm Func}\,(\mathbb{R}^{E'}, \mathbb{R}) = \dfrac{\bigcup_{E_i \subseteq_f E} {\rm Func}\,(\mathbb{R}^{E_i}, \mathbb{R})\times \{ E_i\}}{\thicksim}$$

where

$$(f_i,E_i) \thicksim (f_j,E_j) \iff (\exists E_k \subseteq_f E)(E_i \subseteq E_k)(E_j \subseteq E_k)(\widehat{\mu}_{i,k}(f_i) = \widehat{\mu}_{j,k}(f_j))$$

Given the cone \eqref{Icaro}, the universal property of the colimit yields a unique function
$$u: \varinjlim_{E' \subseteq_f E} {\rm Func}\,(\mathbb{R}^{E'},\mathbb{R}) \to {\rm Func}\,(\mathbb{R}^E, \mathbb{R})$$

such that the following diagram commutes:

$$\xymatrixcolsep{5pc}\xymatrix{
{\rm Func}\,(\mathbb{R}^{E_i}, \mathbb{R}) \ar[dr]^{\widehat{\mu}_i} \ar@/_2pc/[ddr]_{\iota_{E_i,E}} \ar@{>->}[rr]^{\widehat{\mu}_{i,j}}& & {\rm Func}\,(\mathbb{R}^{E_j}, \mathbb{R}) \ar[dl]_{\widehat{\mu}_j} \ar@/^2pc/[ddl]^{\iota_{E_j,E}}\\
  & \varinjlim_{E' \subseteq_f E} {\rm Func}\,(\mathbb{R}^{E'}, \mathbb{R}) \ar[d]^{\exists ! u} & \\
 & {\rm Func}\,(\mathbb{R}^E, \mathbb{R}) &
}$$

Now we claim that $u$ is an injective function.\\

Given any $[(f_i,E_i)], [(f_j,E_j)] \in \varinjlim_{E' \subseteq_f E} {\rm Func}\,(\mathbb{R}^{E'}, \mathbb{R})$ such that:

$$u([(f_i,E_i)]) = u([(f_j,E_j)]),$$

and since $\widehat{\mu}_i(f_i) = [(f_i,E_i)]$ and $\widehat{\mu}_j(f_j) = [(f_j, E_j)]$, this is equivalent to:

$$\iota_{E_i,E}(f_i) = \iota_{E_j,E}(f_j),$$

so:

$$(\forall g \in \mathbb{R}^{E})(\widetilde{f_i}(g) = \widetilde{f_j}(g))$$
i.e.,

\begin{equation}\label{europa}
(\forall g \in \mathbb{R}^{E})(f_i(g \circ \iota_i) = f_j(g \circ \iota_j))
\end{equation}

Now, given any finite $E_i$ and $E_j$, we can take $E_k \subseteq E$ such that $E_i \subseteq E_k$ and $E_j\subseteq E_k$, namely $E_k = E_i \cup E_j$. We are going to show  that $\widehat{\mu}_{i,k}(f_i) = \widehat{\mu}_{j,k}(f_j)$.\\

Given any $g: E_i \cup E_j \rightarrow \mathbb{R}$ consider:

$$\begin{array}{cccc}
\check{g}: & E & \rightarrow & \mathbb{R}\\
        & x & \mapsto & \begin{cases}
        g(x), \, \mbox{if}\,\, x \in E_k\\
        0 \,\, \mbox{otherwise}
        \end{cases}
\end{array}$$

and note that $\check{g}\circ \iota_i = g \circ \imath_{ik}$ and $\check{g}\circ \iota_j = g \circ \imath_{jk}$.\\

Thus we have:

$$\widehat{\mu}_{i,k}(f_i)(g) = f_i(g \circ \imath_{ik})= f_i(\check{g}\circ \iota_i) \stackrel{\eqref{europa}}{=} f_j(\check{g}\circ \iota_j) = f_j(g \circ \imath_{jk}) = \widehat{\mu}_{j,k}(f_j)(g)$$

Since $g$ is arbitrary, we have:

$$\widehat{\mu}_{i,k}(f_i) = \widehat{\mu}_{j,k}(f_j)$$

and

$$[(f_i,E_i)]=[(f_j,E_j)],$$

so $u$ is injective.\\

Therefore we can identify $\varinjlim_{E' \subseteq_f E}{\rm Func}\,(\mathbb{R}^{E'}, \mathbb{R})$ with a subset of ${\rm Func}\,(\mathbb{R}^{E}, \mathbb{R})$, namely:
$$u\left[ \varinjlim_{E' \subseteq_f E}{\rm Func}\,(\mathbb{R}^{E'}, \mathbb{R})\right].$$\\

In this sense, we say that $\varinjlim_{E' \subseteq_f E}{\rm Func}\,(\mathbb{R}^{E'}, \mathbb{R})$ consists of all functions $f: \mathbb{R}^{E} \rightarrow \mathbb{R}$ for which there are some finite $E' \subseteq E$ and some $f': \mathbb{R}^{E'}\rightarrow \mathbb{R}$ with $u([(f',E')])=f$. As an ``\textit{abus de langage}'', one says that $\varinjlim_{E' \subseteq_f E}{\rm Func}\,(\mathbb{R}^{E'}, \mathbb{R})$ is the set of all functions $f: \mathbb{R}^{E'} \to \mathbb{R}$ for some $E' \subseteq_f E$.\\

Now we proceed to describe the free $\mathcal{C}^{\infty}-$ring determined by an arbitrary set $E$.\\

Given two finite subsets of $E$, $E_i, E_j \subseteq E$, we set $\widehat{\mu}_{i,j}\upharpoonright_{\mathcal{C}^{\infty}(\mathbb{R}^{E_i})}: \mathcal{C}^{\infty}(\mathbb{R}^{E_i}) \rightarrowtail \mathcal{C}^{\infty}(\mathbb{R}^{E_j})$

$$\begin{array}{cccc}
\widehat{\mu}_{i,j}\upharpoonright_{\mathcal{C}^{\infty}(\mathbb{R}^{E_i})}: & \mathcal{C}^{\infty}(\mathbb{R}^{E_i})& \rightarrow & \mathcal{C}^{\infty}(\mathbb{R}^{E_j})\\
& (\mathbb{R}^{E_i} \stackrel{\widehat{g}}{\rightarrow} \mathbb{R}) & \mapsto & \begin{array}{cccc}
\widehat{g}\circ \mu_{i,j}: & \mathbb{R}^{E_j} & \rightarrow & \mathbb{R}\\
 & f & \mapsto & \widehat{g}(f\circ \imath_{ij}: \mathbb{R}^{E_i} \rightarrow \mathbb{R})
\end{array}
\end{array}$$

to be the restriction of the map $\widehat{\mu}_{i,j}$ to $\mathcal{C}^{\infty}(\mathbb{R}^{E_i})$, so the following rectangle commutes

$$\xymatrixcolsep{5pc}\xymatrix{
\mathcal{C}^{\infty}(\mathbb{R}^{E_i}) \ar[rr]^{\widehat{\mu}_{i,j}\upharpoonright_{\mathcal{C}^{\infty}(\mathbb{R}^{E_i}, \mathbb{R})}} \ar@{^{(}->}[d]_{\iota_i} & & \mathcal{C}^{\infty}(\mathbb{R}^{E_j}) \ar@{^{(}->}[d]^{\iota_j}\\
{\rm Func}\,(\mathbb{R}^{E_i}, \mathbb{R})  \ar@{>->}[rr]^{\widehat{\mu}_{i,j}}& & {\rm Func}\,(\mathbb{R}^{E_j}, \mathbb{R})
}$$

and consider the following colimit diagram in ${\rm \bf Set}$:

$$\xymatrixcolsep{5pc}\xymatrix{
\mathcal{C}^{\infty}\,(\mathbb{R}^{E_i}) \ar@{>->}[dr]_{\ell_i} \ar@{>->}[rr]^{\widehat{\mu}_{i,j}\upharpoonright_{\mathcal{C}^{\infty}\,(\mathbb{R}^{E_i})}}& & \mathcal{C}^{\infty}\,(\mathbb{R}^{E_j}) \ar@{>->}[dl]^{\ell_j}\\
 & \varinjlim_{E' \subseteq_f E}\mathcal{C}^{\infty}\,(\mathbb{R}^{E'}) &
}$$

Recall that:

$$\varinjlim_{E' \subseteq_f E}\mathcal{C}^{\infty}\,(\mathbb{R}^{E'}) = \dfrac{\bigcup_{E_i \subseteq_f E}\mathcal{C}^{\infty}(\mathbb{R}^{E_i}) \times \{ E_i\}}{\thicksim}$$

where

\begin{multline*}(f_i,E_i)\thicksim (f_j,E_j) \iff \\
\iff (\exists E_k \subseteq_f E)(E_i \subseteq E_k)(E_j \subseteq E_k)\left( \widehat{\mu}_{i,k}\upharpoonright_{\mathcal{C}^{\infty}(\mathbb{R}^{E_i})}(f_i)=\widehat{\mu}_{j,k}\upharpoonright_{\mathcal{C}^{\infty}(\mathbb{R}^{E_j})}(f_j)\right)\end{multline*}

so:

$$\begin{array}{cccc}
\ell_i: & \mathcal{C}^{\infty}(\mathbb{R}^{E_i}) & \rightarrow & \varinjlim_{E' \subseteq_f E}\mathcal{C}^{\infty}\,(\mathbb{R}^{E'})\\
 & (\mathbb{R}^{E_i} \stackrel{f_i}{\rightarrow} \mathbb{R}) & \mapsto & [(f_i, E_i)]
\end{array}$$

We now claim that $\widehat{\mu}_{i,j}\upharpoonright_{\mathcal{C}^{\infty}(\mathbb{R}^{E_i})}: (\mathcal{C}^{\infty}(\mathbb{R}^{E_i}), \Phi_{E_i}) \rightarrow (\mathcal{C}^{\infty}(\mathbb{R}^{E_j}), \Phi_{E_j})$ is a $\mathcal{C}^{\infty}-$homomorphism.\\

Given any $f \in \mathcal{C}^{\infty}(\mathbb{R}^{n}, \mathbb{R})$, we are going to show that the following diagram commutes:

$$\xymatrixcolsep{5pc}\xymatrix{
\mathcal{C}^{\infty}(\mathbb{R}^{E_i})^n \ar[d]_{\Phi_{E_i}(f)} \ar[r]^{{\widehat{\mu}_{i,j}\upharpoonright_{\mathcal{C}^{\infty}(\mathbb{R}^{E_i})}}^n} & \mathcal{C}^{\infty}(\mathbb{R}^{E_j})^n \ar[d]^{\Phi_{E_j}(f)}\\
\mathcal{C}^{\infty}(\mathbb{R}^{E_i}) \ar[r]_{{\widehat{\mu}_{i,j}\upharpoonright_{\mathcal{C}^{\infty}(\mathbb{R}^{E_i})}}} & \mathcal{C}^{\infty}(\mathbb{R}^{E_j})
}$$

Let $(\widehat{g}_1, \cdots, \widehat{g}_n) \in \mathcal{C}^{\infty}(\mathbb{R}^{E_i})^n$. On the one hand we have:

$$\Phi_{E_j}(f)\circ {\widehat{\mu}_{i,j}\upharpoonright_{\mathcal{C}^{\infty}(\mathbb{R}^{E_i})}}^n(\widehat{g}_1, \cdots, \widehat{g}_n) = \Phi_{E_j}(f)(\widehat{\mu}_{i,j}\upharpoonright_{\mathcal{C}^{\infty}(\mathbb{R}^{E_i})}(\widehat{g}_1), \cdots, \widehat{\mu}_{i,j}\upharpoonright_{\mathcal{C}^{\infty}(\mathbb{R}^{E_i})}(\widehat{g}_n))) = $$
$$= \Phi_{E_j}(f)(\widehat{g}_1 \circ \mu_{ij}, \cdots, \widehat{g}_n \circ \mu_{i,j})$$

and on the other hand we have:

$$\widehat{\mu}_{i,j}\upharpoonright_{\mathcal{C}^{\infty}(\mathbb{R}^{E_i})} \circ \Phi_{E_i}(f)(\widehat{g}_1, \cdots, \widehat{g}_n) = \widehat{\mu}_{i,j}\upharpoonright_{\mathcal{C}^{\infty}(\mathbb{R}^{E_i})}(\widehat{f \circ (g_1, \cdots, g_n)})=$$
$$= (\widehat{f \circ (g_1, \cdots, g_n)}) \circ \mu_{i,j}$$

Given any $h \in \mathbb{R}^{E_j}$, we have, on the one side:

$$\Phi_{E_j}(f)(\widehat{g}_1 \circ \mu_{ij}, \cdots, \widehat{g}_n \circ \mu_{i,j})(h) = \Phi_{E_j}(f)(\widehat{g}_1 (h \circ \imath_{ij}), \cdots, \widehat{g}_n (h \circ \imath_{i,j}))=$$
$$=\Phi_{E_j}(f)(\widehat{g}_1, \cdots, \widehat{g}_n)(h \circ \imath_{ij})= \widehat{f \circ (g_1, \cdots, g_n)}(h \circ \imath_{ij})$$

and on the other hand:

$$[\widehat{\mu}_{ij}\upharpoonright_{\mathcal{C}^{\infty}(\mathbb{R}^{E_i})} \circ \Phi_{E_i}(f)(\widehat{g}_1, \cdots, \widehat{g}_n)](h) = [\widehat{\mu}_{ij}\upharpoonright_{\mathcal{C}^{\infty}(\mathbb{R}^{E_i})} \circ (\widehat{f \circ (g_1, \cdots, g_n)})](h) =$$
$$= [\widehat{f \circ (g_1, \cdots, g_n)}\circ \mu_{ij}](h) = \widehat{f \circ (g_1, \cdots, g_n)}(h \circ \imath_{ij})$$

so $\widehat{\mu}_{ij}\upharpoonright_{\mathcal{C}^{\infty}(\mathbb{R}^{E_i})}$ is a $\mathcal{C}^{\infty}-$homomorphism.\\

We have, thus, the directed system $((\mathcal{C}^{\infty}(\mathbb{R}^{E_i}),\Phi_{E_i}), \widehat{\mu}_{ij}\upharpoonright_{\mathcal{C}^{\infty}(\mathbb{R}^{E_i})})$ of $\mathcal{C}^{\infty}-$rings, and the following commutative diagrams (for any $E_i,E_j \subseteq_f E$) in $\mathcal{C}^{\infty}{\rm \bf Rng}$:

$$\xymatrixcolsep{5pc}\xymatrix{
(\mathcal{C}^{\infty}\,(\mathbb{R}^{E_i}),\Phi_{E_i}) \ar@{>->}[dr]_{\ell_i} \ar@{>->}[rr]^{\widehat{\mu}_{i,j}\upharpoonright_{\mathcal{C}^{\infty}\,(\mathbb{R}^{E_i})}}& & (\mathcal{C}^{\infty}\,(\mathbb{R}^{E_j}),\Phi_{E_j}) \ar@{>->}[dl]^{\ell_j}\\
 & \varinjlim_{E' \subseteq_f E}(\mathcal{C}^{\infty}\,(\mathbb{R}^{E'}), \Phi_{E'}) &
}$$

Note that each $\ell_i: (\mathcal{C}^{\infty}\,(\mathbb{R}^{E_i}),\Phi_{E_i}) \rightarrow \varinjlim_{E' \subseteq_f E}(\mathcal{C}^{\infty}\,(\mathbb{R}^{E'}), \Phi_{E'})$ is a $\mathcal{C}^{\infty}-$homomorphism.\\

Let us examine this colimit more closely. We denote:

$$\mathcal{C}^{\infty}(\mathbb{R}^{E}) = \varinjlim_{E' \subseteq_f E}\mathcal{C}^{\infty}\,(\mathbb{R}^{E'})$$

and describe the $\mathcal{C}^{\infty}-$structure on $\mathcal{C}^{\infty}(\mathbb{R}^{E})$,

$$\begin{array}{cccc}
\Phi_E: & \bigcup_{n \in \mathbb{N}}\mathcal{C}^{\infty}(\mathbb{R}^n, \mathbb{R})& \rightarrow & \bigcup_{n \in \mathbb{N}} {\rm Func}\,(\mathcal{C}^{\infty}(\mathbb{R}^{E})^n, \mathcal{C}^{\infty}(\mathbb{R}^{E}))\\
 & (\mathbb{R}^n \stackrel{f}{\rightarrow} \mathbb{R}) & \mapsto & \Phi_E(f): \mathcal{C}^{\infty}(\mathbb{R}^{E})^n \rightarrow \mathcal{C}^{\infty}(\mathbb{R}^{E})
\end{array}$$

where:

$$\begin{array}{cccc}
\Phi_E(f):& \mathcal{C}^{\infty}(\mathbb{R}^{E})^n & \rightarrow & \mathcal{C}^{\infty}(\mathbb{R}^{E})\\
         & ([(\widehat{f}_1,E_1)], \cdots, [(\widehat{f}_n,E_n)]) & \mapsto & [(f\circ (\widehat{\mu}_{1k}\upharpoonright_{\mathcal{C}^{\infty}(\mathbb{R}^{E_1})}(\widehat{f}_1), \cdots, \widehat{\mu}_{nk}\upharpoonright_{\mathcal{C}^{\infty}(\mathbb{R}^{E_n})}(\widehat{f}_n)), E_k)]
\end{array}$$

and $E_k = \cup_{i=1}^{n}E_i$. We have thus:

$$f \circ (\widehat{\mu}_{1k}\upharpoonright_{\mathcal{C}^{\infty}(\mathbb{R}^{E_1})}(\widehat{f}_1), \cdots, \widehat{\mu}_{nk}\upharpoonright_{\mathcal{C}^{\infty}(\mathbb{R}^{E_n})}(\widehat{f}_n)): \mathbb{R}^{E_k} \rightarrow \mathbb{R},$$

which belongs to $\mathcal{C}^{\infty}(\mathbb{R}^{E_k})$, since $\widehat{\mu}_{1k}\upharpoonright_{\mathcal{C}^{\infty}(\mathbb{R}^{E_1})}(\widehat{f}_1), \cdots, \widehat{\mu}_{nk}\upharpoonright_{\mathcal{C}^{\infty}(\mathbb{R}^{E_n})}(\widehat{f}_n) \in \mathcal{C}^{\infty}(\mathbb{R}^{E_k})$ and $f \in \mathcal{C}^{\infty}(\mathbb{R}^n)$.\\

Thus we have the $\mathcal{C}^{\infty}-$ring:

$$(\mathcal{C}^{\infty}(\mathbb{R}^{E}),\Phi_E)$$

Let $E$ be any set, and write

$$\xymatrixcolsep{5pc}\xymatrix{
 & \varinjlim_{E' \subseteq_f E} E' & \\
E' \ar[ur]^{\iota_{E'}} \ar[rr]_{\iota^{E'}_{E''}}& & E'' \ar[ul]_{\iota_{E''}}
}$$

For every finite subset $E' \subseteq_f E$, we have the free $\mathcal{C}^{\infty}-$ring:

$$\jmath_{E'}: E' \rightarrow U(\mathcal{C}^{\infty}(\mathbb{R}^{E'}), \Phi_{E'}).$$

so we can form the commutative cone:

$$\xymatrixcolsep{5pc}\xymatrix{
 & \mathcal{C}^{\infty}(\mathbb{R}^{E}) & \\
E' \ar[ur]^{\ell_{E'}\circ \jmath_{E'}} \ar[rr]_{\iota^{E''}_{E'}}& & E'' \ar[ul]_{\ell_{E''}\circ \jmath_{E''}}
}$$

The universal property of $\varinjlim_{E' \subseteq_f E} E'$ yields a unique function  $\jmath_E: \varinjlim_{E' \subseteq_f E}E' \rightarrow \mathcal{C}^{\infty}(\mathbb{R}^{E})$ such that the following prism commutes in ${\rm \bf Set}$:

$$\xymatrixcolsep{8pc}\xymatrix @!0 @R=4pc @C=6pc {
    E' \ar[rr]^{\iota_{E'}} \ar[rd]_{\iota^{E''}_{E'}} \ar[ddd]_{\jmath_{E'}} && \varinjlim_{E' \subseteq_f E}E' \ar@{-->}[ddd]^{\exists ! \jmath_E} \\
    & E'' \ar[ru]_{\iota_{E''}} \ar[ddd]^{\jmath_{E''}}&\\
    & \\
    \mathcal{C}^{\infty}(\mathbb{R}^{E'}) \ar[rr]^(.25){\ell_{E'}} |!{[ur];[dr]}\hole \ar[rd]_{\widehat{\mu}_{E'E''}\upharpoonright_{\mathcal{C}^{\infty}(\mathbb{R}^{E'})}} && \mathcal{C}^{\infty}(\mathbb{R}^{E}) \\
    & \mathcal{C}^{\infty}(\mathbb{R}^{E''}) \ar[ru]_{\ell_{E''}} & }$$

In ${\rm \bf Set}$ we have:

$$\varinjlim_{E' \subseteq_f E}E' \cong \bigcup_{E' \subseteq_f E} E' = E$$

so we have the function $\jmath_E: E \rightarrow \mathcal{C}^{\infty}(\mathbb{R}^{E})$.

\begin{remark}\label{Benicio}Given a set $E$, for every finite subset $E' \subseteq_f E$, with $\sharp E' = n$, there is a bijection $\omega_n: \{ 1, \cdots, n\} \rightarrow E'$ and a corresponding $\mathcal{C}^{\infty}-$isomorphism:

$$\widehat{\omega_n}: (\mathcal{C}^{\infty}(\mathbb{R}^{E'}), \Phi_{E'}) \rightarrow (\mathcal{C}^{\infty}(\mathbb{R}^n), \Omega).$$

Also, given $E'' \subseteq_f E$ and a bijection $\omega_{m}: \{ 1, \cdots, m\} \to E''$, with $E' \subseteq E''$ (so  with $n \leq m$) we have the composite:

$$ \widehat{\mu}_{nm} := \widehat{\omega_m}^{-1}\circ \widehat{\mu}_{E'E''}\upharpoonright_{\mathcal{C}^{\infty}(\mathbb{R}^{E'})} \circ \widehat{\omega_n}: \mathcal{C}^{\infty}(\mathbb{R}^n) \to \mathcal{C}^{\infty}(\mathbb{R}^m) $$

so the following rectangle commutes:

$$\xymatrixcolsep{5pc}\xymatrix{
\mathcal{C}^{\infty}(\mathbb{R}^{E'}) \ar[r]^{\widehat{\mu}_{E'E''}\upharpoonright_{\mathcal{C}^{\infty}(\mathbb{R}^{E'})}} & \mathcal{C}^{\infty}(\mathbb{R}^{E''})\\
\mathcal{C}^{\infty}(\mathbb{R}^n) \ar[u]_{\widehat{\omega_n}} \ar[r]_{\widehat{\mu_{nm}}} & \mathcal{C}^{\infty}(\mathbb{R}^m) \ar[u]^{\widehat{\omega_m}}
}$$



\end{remark}

Considering $E_i \subseteq E_j$ and the inclusion maps $\iota_i : \mathcal{C}^{\infty}(\mathbb{R}^{E_i}) \hookrightarrow {\rm Func}\,(\mathbb{R}^{E_i})$,  we have the following commutative diagram:

$$\xymatrixcolsep{5pc}\xymatrix{
\mathcal{C}^{\infty}(\mathbb{R}^{E_i}) \ar[rr]^{\widehat{\mu}_{i,j}\upharpoonright_{\mathcal{C}^{\infty}(\mathbb{R}^{E_i})}} \ar@{^{(}->}[d]_{\iota_i} & & \mathcal{C}^{\infty}(\mathbb{R}^{E_j}) \ar@{^{(}->}[d]^{\iota_j}\\
{\rm Func}\,(\mathbb{R}^{E_i}, \mathbb{R}) \ar[dr]^{\widehat{\mu}_i}  \ar@{>->}[rr]^{\widehat{\mu}_{i,j}}& & {\rm Func}\,(\mathbb{R}^{E_j}, \mathbb{R}) \ar[dl]_{\widehat{\mu}_j} \\
  & \varinjlim_{E' \subseteq_f E} {\rm Func}\,(\mathbb{R}^{E'}, \mathbb{R}) & \\
}$$

By the universal property of $\varinjlim_{E_i \subseteq_f E} \mathcal{C}^{\infty}(\mathbb{R}^{E_i},\mathbb{R})$, in the category ${\rm \bf Set}$, given the cone:
$$(\widehat{\mu}_i \circ \iota_i : \mathcal{C}^{\infty}(\mathbb{R}^{E_i}, \mathbb{R}) \to \varinjlim_{E' \subseteq_f E} {\rm Func}\,(\mathbb{R}^{E'}, \mathbb{R})),$$

there is a unique function $v: \varinjlim_{E' \subseteq_f E} \mathcal{C}^{\infty}(\mathbb{R}^{E'}, \mathbb{R}) \to \varinjlim_{E' \subseteq_f E}{\rm Func}\,(\mathbb{R}^{E'}, \mathbb{R})$ such that the following prism commutes:

$$ \xymatrixrowsep{5pc}\xymatrixcolsep{5pc}\xymatrix @!0 @R=4pc @C=6pc {
    \mathcal{C}^{\infty}(\mathbb{R}^{E_i}, \mathbb{R}) \ar@{>->}[rr]^{\widehat{\mu}_{i,j}\upharpoonright_{\mathcal{C}^{\infty}(\mathbb{R}^{E_i}, \mathbb{R})}} \ar[rd]^{\ell_i} \ar@{^{(}->}[dd]_{\iota_i} && \mathcal{C}^{\infty}(\mathbb{R}^{E_j}, \mathbb{R}) \ar@{^{(}->}[dd]^{\iota_j} \ar[ld]_{\ell_j} \\
    & \varinjlim_{E' \subseteq_f E} \mathcal{C}^{\infty}(\mathbb{R}^{E'}, \mathbb{R}) \ar@{>->}[dd]^(.25){v} \\
    {\rm Func}\,(\mathbb{R}^{E_i}, \mathbb{R}) \ar@{>->}[rr]|!{[ur];[dr]}\hole \ar[rd]^{\widehat{\mu}_i} && {\rm Func}\,(\mathbb{R}^{E_j}, \mathbb{R}) \ar[ld]_{\widehat{\mu}_j} \\
    & \varinjlim_{E' \subseteq_f E}{\rm Func}\,(\mathbb{R}^{E'}, \mathbb{R})}$$

where $\ell_i: \mathcal{C}^{\infty}(\mathbb{R}^{E_i}, \mathbb{R}) \to \varinjlim_{E' \subseteq E} \mathcal{C}^{\infty}(\mathbb{R}^{E'}, \mathbb{R})$ are the canonic colimit arrows.\\

We claim that $v = \iota: \varinjlim_{E' \subseteq_f E} \mathcal{C}^{\infty}(\mathbb{R}^{E'}, \mathbb{R}) \hookrightarrow \varinjlim_{E' \subseteq_f E}{\rm Func}\,(\mathbb{R}^{E'}, \mathbb{R})$ (the inclusion).\\

Given $f \in \varinjlim_{E'\subseteq_f E}\mathcal{C}^{\infty}(\mathbb{R}^{E'}, \mathbb{R})$, there is some $E_k \subseteq_f E$ and some $f_k \in \mathcal{C}^{\infty}(\mathbb{R}^{E_k}, \mathbb{R})$ such that:
$$f= [(f_k, E_k)] = \ell_k(f_k)$$
We have:

$$v(f) = v(\ell_k(f_k)) = \widehat{\mu}_k \circ \iota_k(f) = \widehat{\mu}_k(f) = [(f_k,E_k)] =f$$

So we have $\varinjlim_{E' \subseteq_f E} \mathcal{C}^{\infty}(\mathbb{R}^{E'}, \mathbb{R}) \subseteq \varinjlim_{E' \subseteq_f E}{\rm Func}\,(\mathbb{R}^{E'}, \mathbb{R}) \rightarrowtail {\rm Func}\,(\mathbb{R}^{E}, \mathbb{R})$.\\

Hence, we can regard $\varinjlim_{E' \subseteq_f E} \mathcal{C}^{\infty}(\mathbb{R}^{E'}, \mathbb{R})$ as (an actual) subset of $$\varinjlim_{E' \subseteq_f E}{\rm Func}\,(\mathbb{R}^{E'}, \mathbb{R}),$$ and consider, as in \cite{rings2}, $(\mathcal{C}^{\infty}(\mathbb{R}^{E}),\Phi_E)$ as the ring of functions $\mathbb{R}^{E} \to \mathbb{R}$ which smoothly depend on finitely many variables only.\\

The results given in the previous section assure the existence of some constructions within the category of $\mathcal{C}^{\infty}-$rings, such as quotients, products, coproducts and so on.\\

As we shall see, the category of $\mathcal{C}^{\infty}-$rings holds a strong relation with rings of the form $\mathcal{C}^{\infty}(\mathbb{R}^n)$.\\

In this section we are going to describe concretely such constructions in $\mathcal{C}^{\infty}{\rm \bf Rng}$.\\

Our definition of $\mathcal{C}^{\infty}-$ring yields a forgetful functor:

$$\begin{array}{cccc}
U: & \mathcal{C}^{\infty}{\rm \bf Rng} & \rightarrow & {\rm \bf Set}\\
   & (A,\Phi) & \mapsto & A\\
   & ((A,\Phi) \stackrel{\varphi}{\rightarrow} (B,\Psi)) & \mapsto & (A \stackrel{U(\varphi)}{\rightarrow} B)
\end{array}$$


We are going to show that this functor has a left adjoint, the ``\index{free $\mathcal{C}^{\infty}-$ring}free $\mathcal{C}^{\infty}-$ring'', that we shall denote by $L: {\rm \bf Set} \rightarrow \mathcal{C}^{\infty}{\rm \bf Rng}$. Before we do it, we need the following:\\

\begin{remark}\label{Nikos}
Given any $m \in \mathbb{N}$, we note that the set $\mathcal{C}^{\infty}(\mathbb{R}^n)$ may be endowed with a $\mathcal{C}^{\infty}-$structure:

$$\begin{array}{cccc}
    \Omega: & \bigcup_{n \in \mathbb{N}} \mathcal{C}^{\infty}(\mathbb{R}^n, \mathbb{R}) & \rightarrow & \bigcup_{n \in \mathbb{N}} {\rm Func}\,(\mathcal{C}^{\infty}(\mathbb{R}^m)^n, \mathcal{C}^{\infty}(\mathbb{R}^m))\\
    & (\mathbb{R}^n \stackrel{f}{\rightarrow} \mathbb{R}) & \mapsto & \begin{array}{cccc}
    \Omega(f) = f \circ - : & \mathcal{C}^{\infty}(\mathbb{R}^m)^n & \rightarrow & \mathcal{C}^{\infty}(\mathbb{R}^m)\\
      & (h_1, \cdots, h_n) & \mapsto & f \circ (h_1, \cdots, h_n)
    \end{array}
\end{array}$$

so it is easy to see that it can be made into a $\mathcal{C}^{\infty}-$ring $(\mathcal{C}^{\infty}(\mathbb{R}^m), \Omega)$. From now, when dealing with this``canonical'' $\mathcal{C}^{\infty}-$structure,  we shall omit the symbol $\Omega$, writting $\mathcal{C}^{\infty}(\mathbb{R}^m)$ instead of $(\mathcal{C}^{\infty}(\mathbb{R}^m), \Omega)$.\\
\end{remark}

In fact, we are going to show that for finite sets $X$ with $\sharp X = m$, for instance, the $\mathcal{C}^{\infty}-$ring given in the previous remark is (up to isomorphism) the free $\mathcal{C}^{\infty}-$ring on $m$ generators.\\

Indeed, given any finite set $X$ with $\sharp X = m \in \mathbb{N}$, we define:

$$L(X):= \mathcal{C}^{\infty}(\mathbb{R}^m)$$

together with the canonical $\mathcal{C}^{\infty}-$structure $\Omega$ given in \textbf{Remark \ref{Nikos}}.\\

We have the following:\\

\begin{proposition}\label{Kakeu}Let $U: \mathcal{C}^{\infty}{\rm \bf Rng} \rightarrow {\rm \bf Set}$, $(A,\Phi) \mapsto A$ , be the forgetful functor. The pair $(\jmath_n, (\mathcal{C}^{\infty}(\R^n),\Omega))$, where:
$$\begin{array}{cccc}
 \jmath_n: & \{1, \cdots, n \} & \rightarrow & U(\mathcal{C}^{\infty}(\mathbb{R}^n),\Omega)\\
           & i & \mapsto & \pi_i: \mathbb{R}^n \rightarrow \mathbb{R}
\end{array},$$

is the free $\mathcal{C}^{\infty}$-ring with $n$ generators, which are the projections:

$$\begin{array}{cccc}
    \pi_i: & \mathbb{R}^n & \rightarrow & \mathbb{R} \\
     & (x_1, \cdots, x_i, \cdots, x_n) & \mapsto & x_i
  \end{array}$$
\end{proposition}
\begin{proof}
(Cf. \textbf{Proposition 1.1} of \cite{MoerdijkReyess}.)\\

Given any $\mathcal{C}^{\infty}-$ring $(A, \Phi)$ and any function $\alpha : \{ 1,2, \cdots, n \} \rightarrow U(A, \Phi)$, we are going to show that there is a unique $\mathcal{C}^{\infty}-$homomorphism $\widetilde{\alpha}:(\mathcal{C}^{\infty}(\mathbb{R}^n), \Omega) \to (A,\Phi)$ such that the following diagram commutes:
$$\xymatrixcolsep{5pc}\xymatrix{
\{1,2, \cdots, n \} \ar[r]^{\jmath_n} \ar[dr]_{\alpha} & U(\mathcal{C}^{\infty}(\mathbb{R}^n),\Omega) \ar@{.>}[d]^{ U(\widetilde{\alpha})}\\
  & U(A,\Phi)
}$$

Given $\alpha(1), \alpha(2), \cdots, \alpha(n) \in A$, define

$$\begin{array}{cccc}
    \widetilde{\alpha}: & \mathcal{C}^{\infty}(\mathbb{R}^n) & \rightarrow & A \\
     & f & \mapsto & \Phi(f)(\alpha(1), \alpha(2), \cdots, \alpha(n))
  \end{array}$$

Note that such a function satisfies $(\forall i \in \{ 1,2, \cdots, n\})(\widetilde{\alpha}(\jmath_n(i))= \alpha(i))$, since for every $i \in \{1,2, \cdots, n\}$:
\begin{multline*}\widetilde{\alpha}(\jmath_n(i)) = \widetilde{\alpha}(\pi_i) = \Phi(\pi_i)(\alpha(1), \cdots, \alpha(i), \cdots, \alpha(n)) = \\
=\pi_i(\alpha(1), \cdots, \alpha(i), \cdots, \alpha(n)) = \alpha(i).\end{multline*}

Next thing we show is that $\widetilde{\alpha}$ is a $\mathcal{C}^{\infty}-$homomorphism.\\

Let $f: \mathbb{R}^m \to \mathbb{R}$ be any smooth function. We claim that the following diagram commutes:

$$\xymatrixcolsep{5pc}\xymatrix{
\mathcal{C}^{\infty}(\mathbb{R}^n)^m \ar[r]^{\widetilde{\alpha}^m} \ar[d]_{\Omega(f)} & A^m \ar[d]^{\Phi(f)}\\
\mathcal{C}^{\infty}(\mathbb{R}^n) \ar[r]^{\widetilde{\alpha}} & A
}$$

Let $(\varphi_1, \cdots, \varphi_m) \in \mathcal{C}^{\infty}(\mathbb{R}^n)^m$. On the one hand we have:

\begin{multline*}
  \Phi(f) \circ \widetilde{\alpha}^m (\varphi_1, \cdots, \varphi_m) = \Phi(f) \circ (\widetilde{\alpha}(\varphi_1), \cdots, \widetilde{\alpha}(\varphi_m)) = \\
  \Phi(f)(\Phi(\varphi_1)(\alpha(1), \alpha(2), \cdots, \alpha(n)), \cdots, \Phi(\varphi_m)(\alpha(1), \alpha(2), \cdots, \alpha(n)))
\end{multline*}

On the other hand we have:

\begin{multline*}
  \widetilde{\alpha} \circ \mathcal{C}^{\infty}(\mathbb{R}^n)(f)(\varphi_1, \cdots, \varphi_m) = \widetilde{\alpha}(f \circ (\varphi_1, \cdots, \varphi_m)) =\\
  =\Phi(f \circ (\varphi_1, \cdots, \varphi_m))(\alpha(1), \alpha(2), \cdots, \alpha(n))
\end{multline*}

Since $(A, \Phi)$ is a $\mathcal{C}^{\infty}-$ring, we have also:

$$\Phi(f \circ (\varphi_1, \cdots, \varphi_m)) = \Phi(f) \circ (\Phi(\varphi_1), \cdots, \Phi(\varphi_m)),$$

hence:
$$\Phi(f) \circ \widetilde{\alpha}^m (\varphi_1, \cdots, \varphi_m) = \widetilde{\alpha} \circ \Omega(f)(\varphi_1, \cdots, \varphi_m),$$

and $\widetilde{\alpha}$ is indeed a $\mathcal{C}^{\infty}-$homomorphism.\\

For the uniqueness of $\widetilde{\alpha}$, suppose $\Psi :(\mathcal{C}^{\infty}(\mathbb{R}^n),\Omega) \to (A,\Phi)$ is a $\mathcal{C}^{\infty}-$homo\-morphism such that $(\forall i \in \{1,2, \cdots,n \})(\Psi(\jmath_n(i)) = \alpha(i))$. Since $\Psi$ is a $\mathcal{C}^{\infty}-$homo\-morphism, in particular the following diagram commutes:

$$\xymatrixcolsep{5pc}\xymatrix{
\mathcal{C}^{\infty}(\mathbb{R}^n)^n \ar[r]^{\Psi^n} \ar[d]_{\Omega(f)} & A^n \ar[d]^{\Phi(f)}\\
\mathcal{C}^{\infty}(\mathbb{R}^n) \ar[r]^{\Psi} & A
}$$

Note that for any $f \in \mathcal{C}^{\infty}(\mathbb{R}^n)$, $f \circ (\pi_1, \cdots, \pi_n)  =f$, and since the diagram above commutes, we have:

$$\Phi(f)(\Psi(\pi_1), \cdots, \Psi(\pi_n)) = \Psi (f \circ (\pi_1, \cdots, \pi_n)) = \Psi(f)$$

so

\begin{multline*}\Psi(f) = \Phi(f)(\Psi(\pi_1), \cdots, \Psi(\pi_n)) = \\
=\Phi(f)(\Psi(\jmath_n(1)), \cdots, \Psi(\jmath_n(n))) = \Phi(f)(\alpha(1), \cdots, \alpha(n)) = \widetilde{\alpha}(f).\end{multline*}

Thus it is proved that $(\mathcal{C}^{\infty}(\mathbb{R}^n), \Omega)$ is (up to isomorphism) the free $\mathcal{C}^{\infty}-$ring on $n$ generators.

\end{proof}

Given a finite set $X$ with $\sharp X = n$, consider a bijection $\omega: \{ 1, \cdots, n\} \rightarrow X$ and denote, for any $i \in \{ 1, \cdots, n\}$, $x_i = \omega(i)$.\\

We define:

$$\mathbb{R}^{X} = \{ f: X \to \mathbb{R} | f \,\, \mbox{is}\,\, \mbox{a}\,\,\mbox{function} \}.$$

Due to the bijection $\omega$, any smooth function $g \in \mathcal{C}^{\infty}(\mathbb{R}^n, \mathbb{R})$ can be interpreted as:

$$\begin{array}{cccc}
\widehat{g}: & \mathbb{R}^{X} & \rightarrow & \mathbb{R}\\
             & (X \stackrel{f}{\rightarrow} \mathbb{R}) & \mapsto & g(f(x_1), \cdots, f(x_n))
\end{array}$$

so we define:

$$\mathcal{C}^{\infty}(\mathbb{R}^{X}):= \{ \widehat{g}: \mathbb{R}^{X} \rightarrow \mathbb{R} | g \in \mathcal{C}^{\infty}(\mathbb{R}^n,\mathbb{R})\}$$

Note that $\mathcal{C}^{\infty}\,(\mathbb{R}^{X})$ has a natural $\mathcal{C}^{\infty}-$structure given by:

$$\begin{array}{cccc}
\Phi_X: & \bigcup_{n \in \mathbb{N}}\mathcal{C}^{\infty}\,(\mathbb{R}^n, \mathbb{R}) & \rightarrow & \bigcup_{n \in \mathbb{N}}{\rm Func}\,(\mathcal{C}^{\infty}(\mathbb{R}^{X})^n, \mathcal{C}^{\infty}(\mathbb{R}^X))\\
    & (\mathbb{R}^n \stackrel{f}{\rightarrow} \mathbb{R}) & \mapsto & \Phi_X(f): \mathcal{C}^{\infty}(\mathbb{R}^{X})^n \rightarrow \mathcal{C}^{\infty}(\mathbb{R}^{X})
\end{array}$$

where:

$$\begin{array}{cccc}
\Phi_X(f): & \mathcal{C}^{\infty}(\mathbb{R}^{X})^n & \rightarrow & \mathcal{C}^{\infty}(\mathbb{R}^{X})\\
          & (\widehat{g_1}, \cdots, \widehat{g_n}) & \mapsto & \widehat{f \circ (g_1, \cdots, g_n)}: \mathbb{R}^{X} \rightarrow \mathbb{R}
\end{array}$$

which is well-defined since the map $g \mapsto \widehat{g}$ is a bijection for a fixed $\omega$.\\

Thus we have the following $\mathcal{C}^{\infty}-$ring:

$$(\mathcal{C}^{\infty}(\mathbb{R}^{X}), \Phi_X)$$

Note that given any $x \in X$, the evaluation map:

$$\begin{array}{cccc}
          {\rm ev}_x: & \mathbb{R}^{X} & \rightarrow & \mathbb{R}\\
              & f & \mapsto & f(x)
          \end{array}$$

belongs to $\mathcal{C}^{\infty}(\mathbb{R}^{X})$. In fact, given $x \in X = \{ x_1, \cdots, x_n\}$, there is $i \in \{ 1, \cdots, n\}$ such that $x = x_i$.\\

\textbf{Claim:} ${\rm ev}_{x_i} = \widehat{\pi_i}$, where $\pi_i: \mathbb{R}^n \rightarrow \mathbb{R}$ is the (smooth) projection on the $i-$th coordinate.\\

We have, in fact, for any $f \in \mathbb{R}^{X}$, $ {\rm ev}_{x_i}(f) = f(x_i) = \pi_i(f(x_1), \cdots, f(x_i), \cdots, f(x_n)) = \widehat{\pi_i}(f)$, so ${\rm ev}_x \in \mathcal{C}^{\infty}(\mathbb{R}^{X})$.\\

We have, thus, the following function:

$$\begin{array}{cccc}
\jmath_X: & X & \rightarrow & U(\mathcal{C}^{\infty}(\mathbb{R}^{X}),\Phi_X)\\
          & x & \mapsto & \begin{array}{cccc}
          {\rm ev}_x: & \mathbb{R}^{X} & \rightarrow & \mathbb{R}\\
              & f & \mapsto & f(x)
          \end{array}
\end{array}$$

\begin{proposition}\label{Kakau}Let $X$ be a finite set with $\sharp X = n$ and consider:

$$\mathbb{R}^{X} = \{ f: X \to \mathbb{R} | f \,\, \mbox{is}\,\, \mbox{a}\,\,\mbox{function} \}$$

together with some bijection $\omega: \{1, \cdots, n\} \rightarrow X$, $\omega(i)=x_i \in X$.\\

Under those circumstances, the following function is a $\mathcal{C}^{\infty}-$isomorphism:

$$\begin{array}{cccc}
\widehat{\omega}: & (\mathcal{C}^{\infty}(\mathbb{R}^{X}), \Phi_X) & \rightarrow & (\mathcal{C}^{\infty}(\mathbb{R}^n),\Omega)\\
  & \begin{array}{cccc}
  \widehat{g}:& \mathbb{R}^{X} &\to & \mathbb{R}\\
  & f & \mapsto & \widehat{g}(f) \end{array} & \mapsto & g: \mathbb{R}^n \to \mathbb{R}
\end{array}$$
\end{proposition}
\begin{proof}
First we show that $\widehat{\omega}$ is a $\mathcal{C}^{\infty}-$homomorphism.\\

Let $m \in \mathbb{N}$ and $h \in \mathcal{C}^{\infty}(\mathbb{R}^m, \mathbb{R})$. We claim that the following diagram commutes:

$$\xymatrixcolsep{5pc}\xymatrix{
\mathcal{C}^{\infty}(\mathbb{R}^{X})^m \ar[r]^{\widehat{\omega}^m} \ar[d]_{\Phi_X(h)}& \mathcal{C}^{\infty}(\mathbb{R}^n)^m \ar[d]^{\Omega(h)}\\
\mathcal{C}^{\infty}(\mathbb{R}^{X}) \ar[r]_{\widehat{\omega}} & \mathcal{C}^{\infty}(\mathbb{R}^n)
}$$

Given $(\widehat{g_1}, \cdots, \widehat{g_m}) \in \mathcal{C}^{\infty}(\mathbb{R}^{X})^m$, we have:

$$\widehat{\omega}(\Phi_X(h)(\widehat{g_1}, \cdots, \widehat{g_m})) = \widehat{\omega}(\widehat{h \circ (g_1, \cdots,g_m)}) = h \circ (g_1, \cdots, g_m)$$

and

$$\Omega(h)(\widehat{\omega}(\widehat{g_1}), \cdots, \widehat{\omega}(\widehat{g_m})) = \Omega(h)\circ (g_1, \cdots, g_m) = h \circ (g_1, \cdots, g_m)$$

so $\widehat{\omega}$ is a $\mathcal{C}^{\infty}-$homomorphism.\\

We claim that:

$$\begin{array}{cccc}
\widehat{\delta}: & (\mathcal{C}^{\infty}(\mathbb{R}^m),\Omega) & \mapsto & (\mathcal{C}^{\infty}(\mathbb{R}^{X}),\Phi_X)\\
     & (\mathbb{R}^m \stackrel{g}{\rightarrow} \mathbb{R}) & \mapsto & \begin{array}{cccc}
     \widehat{g}: & \mathbb{R}^{X} & \rightarrow & \mathbb{R}\\
      & f & \mapsto & g(f(x_1), \cdots, f(x_n))
     \end{array}
\end{array}$$

is a $\mathcal{C}^{\infty}-$homomorphism. In fact,  for every $n \in \mathbb{N}$, $h \in \mathcal{C}^{\infty}(\mathbb{R}^n, \mathbb{R})$ the following diagram commutes:

$$\xymatrixcolsep{5pc}\xymatrix{
\mathcal{C}^{\infty}(\mathbb{R}^m)^n \ar[r]^{\widehat{\delta}^n} \ar[d]_{\Omega(h)} & \mathcal{C}^{\infty}(\mathbb{R}^{X})^n \ar[d]^{\Phi_X(h)}\\
\mathcal{C}^{\infty}(\mathbb{R}^m) \ar[r]_{\widehat{\delta}} & \mathcal{C}^{\infty}(\mathbb{R}^{X})
}$$

since given $(g_1, \cdots, g_n) \in \mathcal{C}^{\infty}(\mathbb{R}^m)^n$ we have:

$$\Phi_X(h)(\widehat{\delta}(g_1), \cdots, \widehat{\delta}(g_m)) = \Phi_X(h)\circ (\widehat{g_1}, \cdots, \widehat{g_n}) = \widehat{h \circ (g_1, \cdots, g_n)}$$

and

$$\widehat{\delta}(\Omega(h)(g_1, \cdots, g_n)) = \widehat{\delta}(h \circ (g_1, \cdots, g_n)) = \widehat{h \circ (g_1, \cdots, g_n)},$$

so $\widehat{\delta}$ is a $\mathcal{C}^{\infty}-$homomorphism.\\

Given $g \in \mathcal{C}^{\infty}(\mathbb{R}^m)$, we have:

$$\widehat{\omega}(\widehat{\delta}(g)) = \widehat{\omega}(\widehat{g}) = g$$

so

$$\widehat{\omega} \circ \widehat{\delta} = {\rm id}_{\mathcal{C}^{\infty}(\mathbb{R}^m)}.$$

Also, given $\widehat{g} \in \mathcal{C}^{\infty}(\mathbb{R}^{X})$, we have:

$$\widehat{\delta}(\widehat{\omega}(\widehat{g})) = \widehat{\delta}(g) = \widehat{g},$$

so

$$\widehat{\delta} \circ \widehat{\omega} = {\rm id}_{\mathcal{C}^{\infty}(\mathbb{R}^{X})}$$

\end{proof}

As an immediate consequence of \textbf{Propositions  \ref{Kakau}} and \textbf{\ref{Kakeu}}, we have:

\begin{corollary}For any finite set $X$, $(\jmath_X, (\mathcal{C}^{\infty}(\mathbb{R}^{X}), \Phi_X))$ is the free $\mathcal{C}^{\infty}-$ring defined by $X$.
\end{corollary}
\begin{proof}
Let $\omega: \{1, \cdots, n\} \rightarrow X$ be a bijection, $\widehat{\omega}: (\mathcal{C}^{\infty}(\mathbb{R}^{X}), \Phi_X) \rightarrow (\mathcal{C}^{\infty}(\mathbb{R}^n), \Omega)$ be the $\mathcal{C}^{\infty}-$isomorphism induced by $\omega$, as given in \textbf{Proposition \ref{Kakau}}, $(A,\Phi)$ be any $\mathcal{C}^{\infty}-$ring and let $f: X \rightarrow U(A,\Phi)$ be any function. \\

Note that the following diagram commutes:

$$\xymatrixcolsep{5pc}\xymatrix{
\{1, \cdots, n \} \ar[r]^{\jmath_n} \ar[d]_{\omega} & U(\mathcal{C}^{\infty}(\mathbb{R}^n), \Omega) \ar[d]^{U(\widehat{\omega}^{-1})}\\
X \ar[r]^{\jmath_X} & U(\mathcal{C}^{\infty}(\mathbb{R}^{X}),\Phi_X)\\
},$$

since given any $i \in \{ 1, \cdots, n\}$, $\widehat{\omega}^{-1}(\jmath_n)(i) = \widehat{\omega}^{-1}(\pi_i) = \widehat{\pi_i} = {\rm ev}_{x_i} = \jmath_X(x_i) = \jmath_X \circ \omega(i)$.

We have the following diagram:

$$\xymatrixcolsep{5pc}\xymatrix{
\{1, \cdots, n \} \ar[r]^{\jmath_n} \ar[d]_{\omega} & U(\mathcal{C}^{\infty}(\mathbb{R}^n), \Omega) \ar[d]^{U(\widehat{\omega}^{-1})}\\
X \ar[r]^{\jmath_X} \ar[dr]_{f} & U(\mathcal{C}^{\infty}(\mathbb{R}^{X}),\Phi_X)\\
 & U(A,\Phi)
}$$

Since $(\jmath_n,(\mathcal{C}^{\infty}(\mathbb{R}^n), \Omega))$ is free, given the function $f \circ \omega: \{ 1, \cdots, n\} \rightarrow U(A,\Phi)$, there is a unique $\mathcal{C}^{\infty}-$homomorphism $\widetilde{f\circ \omega}: (\mathcal{C}^{\infty}(\mathbb{R}^n),\Omega) \rightarrow (A, \Phi)$ such that the following diagram commutes:\\

$$\xymatrixcolsep{5pc}\xymatrix{
\{ 1, \cdots, n\} \ar[r]^{\jmath_n} \ar[dr]_{f \circ \omega} & U(\mathcal{C}^{\infty}(\mathbb{R}^n),\Omega) \ar[d]^{U(\widetilde{f \circ \omega})}\\
 & U(A,\Phi)
}$$

We have, thus, the following commutative diagram:

$$\xymatrixcolsep{5pc}\xymatrix{
\{1, \cdots, n \} \ar[r]^{\jmath_n} \ar[d]_{\omega} & U(\mathcal{C}^{\infty}(\mathbb{R}^n), \Omega) \ar@/^5pc/[dd]^{U(\widetilde{f \circ \omega})} \ar[d]^{U(\widehat{\omega}^{-1})}\\
X \ar[r]^{\jmath_X} \ar[dr]_{f} & U(\mathcal{C}^{\infty}(\mathbb{R}^{X}),\Phi_X)\\
 & U(A,\Phi)
}$$

It now suffices to take the $\mathcal{C}^{\infty}-$homomorphism $\widetilde{f}:= (\widetilde{f \circ \omega})\circ \widehat{\omega}: (\mathcal{C}^{\infty}(\mathbb{R}^{X}),\Phi_X) \rightarrow (A,\Phi)$, and the following diagram commutes:

$$\xymatrixcolsep{5pc}\xymatrix{
X \ar[r]^{\jmath_X} \ar[dr]_{f} & U(\mathcal{C}^{\infty}(\mathbb{R}^X),\Phi_X) \ar[d]^{U(\widetilde{f})}\\
 & U(A,\Phi)
}$$

By construction such a $\widetilde{f}$ is unique, so $(\jmath_X, (\mathcal{C}^{\infty}(\mathbb{R}^{X}), \Phi_X))$ is the free $\mathcal{C}^{\infty}-$ring determined by $X$.
\end{proof}

Now we turn to the definition of the free $\mathcal{C}^{\infty}-$ring determined by an arbitrary set.\\

Let $E$ be any set, and consider:

$$(\mathcal{C}^{\infty}(\mathbb{R}^{E}),\Phi_E) = \varinjlim_{E' \subseteq_f E} (\mathcal{C}^{\infty}(\mathbb{R}^{E'}),\Phi_{E'}),$$

where ``$\subseteq_f$'' stands for ``is a finite subset of'', together with the unique arrow $\jmath_E: E = \bigcup_{E' \subseteq_f E} E' \rightarrow \mathcal{C}^{\infty}(\mathbb{R}^{E})$ such that for every $E' \subseteq_f E$ the following diagram commutes:

$$\xymatrixcolsep{5pc}\xymatrix{
E \ar[r]^{\jmath_E} & \mathcal{C}^{\infty}(\mathbb{R}^{E})\\
E' \ar@{^{(}->}[u] \ar[r]_{\jmath_{E'}} & \mathcal{C}^{\infty}(\mathbb{R}^{E'}) \ar[u]^{\ell_{E'}}}$$

\begin{proposition}Let $E$ be any set. The pair $(\jmath_E, (\mathcal{C}^{\infty}(\mathbb{R}^{E}),\Phi_E))$ is the free $\mathcal{C}^{\infty}-$ring determined by $E$.
\end{proposition}
\begin{proof}
Let $(A,\Phi)$ be any $\mathcal{C}^{\infty}-$ring and let $f: E \to A$ be any function. We decompose the set $E$ as

$$E = \bigcup_{E' \subseteq_f E}E' = \varinjlim_{E' \subseteq_f E}E$$

and take the following cone:

$$\xymatrixcolsep{5pc}\xymatrix{
 & A & \\
E' \ar[ur]^{f \circ \iota_{E'}} \ar[rr]_{ \iota^{E'}_{E''}}& & E'' \ar[ul]_{f \circ\iota_{E''}}
}$$

Since for every finite $E' \subseteq E$, $(\jmath_{E'}, \mathcal{C}^{\infty}(\mathbb{R}^{E'}))$ is the free $\mathcal{C}^{\infty}-$ring determined by $E'$, given the function $f \circ \iota_{E'}: E' \to A$, there is a unique $\mathcal{C}^{\infty}-$homomorphism $\widehat{f_{E'}}: (\mathcal{C}^{\infty}(\mathbb{R}^{E'}), \Phi_{E'}) \rightarrow (A,\Phi)$ such that the following diagram commutes:

$$\xymatrixcolsep{5pc}\xymatrix{
E' \ar[r]^{\jmath_{E'}} \ar[dr]_{f \circ \iota_{E'}} & \mathcal{C}^{\infty}(\mathbb{R}^{E'}) \ar[d]^{U(\widehat{f}_{E'})}\\
 & U(A,\Phi)
}$$

Note that the following diagram commutes:

$$\xymatrixcolsep{5pc}\xymatrix{
(\mathcal{C}^{\infty}(\mathbb{R}^{E'}), \Phi_{E'}) \ar[rr]^{\widehat{\mu}_{E'E''}\upharpoonright_{\mathcal{C}^{\infty}(\mathbb{R}^{E'})}} \ar[dr]_{\widehat{f}_{E'}} & & (\mathcal{C}^{\infty}(\mathbb{R}^{E''}), \Phi_{E''}) \ar[dl]^{\widehat{f}_{E''}}\\
 & (A,\Phi) &
}$$

In fact, the following rectangle commutes:

$$\xymatrixcolsep{8pc}\xymatrix{
E' \ar[r]^{\iota_{E'}^{E''}} \ar[d]_{\jmath_{E'}} & E'' \ar[d]^{\jmath_{E''}}\\
\mathcal{C}^{\infty}(\mathbb{R}^{E'}) \ar[r]_{U(\widehat{\mu_{E'E''}}\upharpoonright_{\mathcal{C}^{\infty}(\mathbb{R}^{E'})})} & \mathcal{C}^{\infty}(\mathbb{R}^{E''})
}$$

so:

\begin{equation}\label{words}
\jmath_{E''}\circ \iota^{E''}_{E'} = U(\widehat{\mu}_{E'E''}\upharpoonright_{\mathcal{C}^{\infty}(\mathbb{R}^{E'})})\circ \jmath_{E'}
\end{equation}

and $\widehat{f}_{E'}: (\mathcal{C}^{\infty}(\mathbb{R}^{E'}),\Phi_{E'}) \to (A,\Phi)$ is the unique $\mathcal{C}^{\infty}-$homomorphism such that:

$$\xymatrixcolsep{5pc}\xymatrix{
E' \ar[r]^{\jmath_{E'}} \ar[dr]_{f \circ \iota_{E'}} & \mathcal{C}^{\infty}(\mathbb{R}^{E'}) \ar[d]^{U(\widehat{f}_{E'})}\\
 & U(A,\Phi)
}$$

commutes, i.e.,

\begin{equation}\label{words1}
 U(\widehat{f}_{E'})\circ \jmath_{E'} = f \circ \iota_{E'}
\end{equation}

We are going to show that $\widehat{f}_{E''}\circ \widehat{\mu}_{E'E''}\upharpoonright_{\mathcal{C}^{\infty}(\mathbb{R}^{E'})}:(\mathcal{C}^{\infty}(\mathbb{R}^{E'}),\Phi_{E'}) \to (A,\Phi)$ is such that:

$$\xymatrixcolsep{5pc}\xymatrix{
E' \ar[r]^{\jmath_{E'}} \ar[dr]_{f \circ \iota_{E'}} & \mathcal{C}^{\infty}(\mathbb{R}^{E'}) \ar[d]^{U(\widehat{f}_{E''}\circ \widehat{\mu}_{E'E''}\upharpoonright_{\mathcal{C}^{\infty}(\mathbb{R}^{E'})})}\\
 & U(A,\Phi)
}$$

commutes. By uniqueness it will follow that:

$$\widehat{f}_{E''}\circ \widehat{\mu}_{E'E''}\upharpoonright_{\mathcal{C}^{\infty}(\mathbb{R}^{E'})} = \widehat{f}_{E'}.$$

Composing \eqref{words} with $U(\widehat{f}_{E''})$ by the left yields:

$$U(\widehat{f}_{E''}) \circ \jmath_{E''}\circ \iota^{E''}_{E'} = U(\widehat{f}_{E''}) \circ U(\widehat{\mu}_{E'E''}\upharpoonright_{\mathcal{C}^{\infty}(\mathbb{R}^{E'})})\circ \jmath_{E'}$$

i.e.,

$$U(\widehat{f}_{E''}) \circ \jmath_{E''}\circ \iota^{E''}_{E'} = U(\widehat{f}_{E''} \circ \widehat{\mu}_{E'E''}\upharpoonright_{\mathcal{C}^{\infty}(\mathbb{R}^{E'})})\circ \jmath_{E'}$$

Since $U(\widehat{f}_{E''})\circ \jmath_{E''} = f \circ \iota_{E''}$, we have:

$$(f \circ \iota_{E''})\circ \iota^{E''}_{E'} = U(\widehat{f}_{E''} \circ \widehat{\mu}_{E'E''}\upharpoonright_{\mathcal{C}^{\infty}(\mathbb{R}^{E'})})\circ \jmath_{E'}$$

We know that $\iota_{E''}\circ \iota^{E''}_{E'} = \iota_{E'}$, so we get:

$$f \circ \iota_{E'}= U(\widehat{f}_{E''} \circ \widehat{\mu}_{E'E''}\upharpoonright_{\mathcal{C}^{\infty}(\mathbb{R}^{E'})})\circ \jmath_{E'},$$

and the diagram commutes.\\

By the universal property of the colimit $(\mathcal{C}^{\infty}(\mathbb{R}^{E}),\Phi_{E})$, there is a unique $\mathcal{C}^{\infty}-$homomorphism $\widehat{f}: (\mathcal{C}^{\infty}(\mathbb{R}^{E}),\Phi_E) \to (A,\Phi)$ such that:

$$\xymatrixcolsep{5pc}\xymatrix{
(\mathcal{C}^{\infty}(\mathbb{R}^{E'}),\Phi_{E'}) \ar[rr]^{\widehat{\mu}_{E'E''}\upharpoonright_{\mathcal{C}^{\infty}(\mathbb{R}^{E'})}} \ar[dr]_{\ell_{E''}} \ar@/_2pc/[ddr]_{\widehat{f}_{E'}} & & (\mathcal{C}^{\infty}(\mathbb{R}^{E''}),\Phi_{E''}) \ar[dl]^{\ell_{E''}} \ar@/^2pc/[ddl]^{\widehat{f}_{E''}} \\
  & (\mathcal{C}^{\infty}(\mathbb{R}^{E}),\Phi_{E}) \ar[d]^{\widehat{f}} & \\
  & (A,\Phi)&
}$$

Now we need only to show that $U(\widehat{f})\circ \jmath_E = f$.\\

Given any $x \in E$, there is some finite $E' \subseteq_f E$ such that $x \in E'$.\\

We have the following commutative diagram:

$$\xymatrixcolsep{5pc}\xymatrix{
E' \ar[r]^{\jmath_{E'}} & U(\mathcal{C}^{\infty}(\mathbb{R}^{E'}),\Phi_{E'}) \ar[r]^{\ell_{E'}} \ar[dr]_{U(\widehat{f}_{E'})} & U(\mathcal{C}^{\infty}(\mathbb{R}^{E}),\Phi_{E}) \ar[d]^{U(\widehat{f})}\\
 & & U(A,\Phi)
}$$

so:

$$U(\widehat{f})\circ \ell_{E'}\circ \jmath_{E'} = U(\widehat{f}_{E'})\circ \jmath_{E'}$$

Since $\ell_{E'}\circ \jmath_{E'} = \jmath_E \circ \iota_{E'}$ and $U(\widehat{f}_{E'})\circ \jmath_{E'} = f \circ \iota_{E'}$, we get:

$$ U(\widehat{f})\circ \jmath_E \circ \iota_{E'}  = f \circ \iota_{E'}$$

thus:

$$U(\widehat{f})\circ \jmath_{E}(x) = U(\widehat{f})\circ \jmath_{E} \circ \iota_{E'}(x) = f(\iota_{E'}(x)) = f(x)$$

and the diagram commutes.\\

The uniqueness of $\widehat{f}$ comes from its construction.
\end{proof}

In the following proposition, we present a description of a left adjoint to the forgetful functor $U: \mathcal{C}^{\infty}{\rm \bf Rng} \rightarrow {\rm \bf Set}$.\\

\begin{proposition}\label{Julia}The functions:
$$\begin{array}{cccc}
L_0: & {\rm Obj}\,({\rm \bf Set}) & \rightarrow & {\rm Obj}\,(\mathcal{C}^{\infty}{\rm \bf Rng})\\
     & X & \mapsto & (\mathcal{C}^{\infty}\,(\mathbb{R}^X), \Phi_X)
\end{array}$$

and

$$\begin{array}{cccc}
L_1: & {\rm Mor}\,({\rm \bf Set}) & \rightarrow & {\rm Mor}\,(\mathcal{C}^{\infty}{\rm \bf Rng})\\
     & (X \stackrel{f}{\rightarrow} Y) & \mapsto &   (\mathcal{C}^{\infty}(\mathbb{R}^X), \Phi_X) \stackrel{\widetilde{f}}{\rightarrow} (\mathcal{C}^{\infty}(\mathbb{R}^Y), \Phi_Y)
     \end{array}$$

where $\widetilde{f}: L_0(X) \to L_0(Y)$ is the unique $\mathcal{C}^{\infty}-$homomorphism given by the universal property of the free $\mathcal{C}^{\infty}-$ring $\jmath_X: X \rightarrow \mathcal{C}^{\infty}(\mathbb{R}^{X})$: given the function $\jmath_Y \circ f: X \rightarrow U(\mathcal{C}^{\infty}(\mathbb{R}^{Y}))$, there is a unique $\mathcal{C}^{\infty}-$homomorphism such that the following diagram commutes:\\

$$\xymatrixcolsep{5pc}\xymatrix{
X \ar[r]^{\jmath_X} \ar[dr]_{\jmath_Y \circ f} & U(\mathcal{C}^{\infty}(\mathbb{R}^{X})) \ar@{-->}[d]^{U(\widetilde{f})}\\
 & U(\mathcal{C}^{\infty}(\mathbb{R}^{Y}))
}$$

define a functor $L: {\rm \bf Set} \rightarrow \mathcal{C}^{\infty}{\rm \bf Rng}$ which is left adjoint to the forgetful functor $U: \mathcal{C}^{\infty}{\rm \bf Rng} \to {\rm \bf Set}$.
\end{proposition}
\begin{proof}
We claim that $L_1({\rm id}_X)={\rm id}_{(\mathcal{C}^{\infty}(\mathbb{R}^{X}),\Phi_X)}$.\\

On the one hand, $\widetilde{{\rm id}_X}$ is the only $\mathcal{C}^{\infty}-$homomorphism such that:

$$\xymatrixcolsep{5pc}\xymatrix{
X \ar[r]^{\jmath_X} \ar[dr]_{\jmath_X \circ {\rm id}_X} & U(\mathcal{C}^{\infty}(\mathbb{R}^{X})) \ar@{-->}[d]^{U(\widetilde{{\rm id}_X})}\\
 & U(\mathcal{C}^{\infty}(\mathbb{R}^{X}))
}$$

commutes.\\

On the other hand, since ${\rm id}_{(\mathcal{C}^{\infty}(\mathbb{R}^{X}),\Phi_X)}$ is a $\mathcal{C}^{\infty}-$homomorphism which makes the following diagram to commute:

$$\xymatrixcolsep{5pc}\xymatrix{
X \ar[r]^{\jmath_X} \ar[dr]_{\jmath_X } & U(\mathcal{C}^{\infty}(\mathbb{R}^{X})) \ar@{-->}[d]^{U({\rm id}_{(\mathcal{C}^{\infty}(\mathbb{R}^{X}),\Phi_X)})}\\
 & U(\mathcal{C}^{\infty}(\mathbb{R}^{X}))
}$$

it follows that:

$$L_1({\rm id}_X) = \widetilde{{\rm id}_X}= {\rm id}_{(\mathcal{C}^{\infty}(\mathbb{R}^{X}),\Phi_X)}$$

Now, let $X,Y,Z$ be sets and $f: X \to Y$ and $g: Y \to Z$ be functions. We claim that:

$$L_1(g \circ f) = L_1(g) \circ L_1(f)$$

By definition, $L_1(f)=\widetilde{f}: (\mathcal{C}^{\infty}(\mathbb{R}^{X}),\Phi_X) \rightarrow (\mathcal{C}^{\infty}(\mathbb{R}^{Y}),\Phi_Y)$ is the only $\mathcal{C}^{\infty}-$homomorphism such that:

$$\xymatrixcolsep{5pc}\xymatrix{
X \ar[r]^{\jmath_X} \ar[dr]_{\jmath_Y \circ f} & U(\mathcal{C}^{\infty}(\mathbb{R}^{X})) \ar@{-->}[d]^{U(\widetilde{f})}\\
 & U(\mathcal{C}^{\infty}(\mathbb{R}^{Y}))
}$$

commutes, that is, such that $U(\widetilde{f})\circ \jmath_X = \jmath_Y \circ f$. Also, by definition, $L_1(g)=\widetilde{g}: (\mathcal{C}^{\infty}(\mathbb{R}^{Y}),\Phi_Y) \rightarrow (\mathcal{C}^{\infty}(\mathbb{R}^{Z}),\Phi_Z)$ is the only $\mathcal{C}^{\infty}-$homomorphism such that:

$$\xymatrixcolsep{5pc}\xymatrix{
Y \ar[r]^{\jmath_Y} \ar[dr]_{\jmath_Z \circ g} & U(\mathcal{C}^{\infty}(\mathbb{R}^{Y})) \ar@{-->}[d]^{U(\widetilde{g})}\\
 & U(\mathcal{C}^{\infty}(\mathbb{R}^{Z}))
}$$

commutes, that is, such that $U(\widetilde{g})\circ \jmath_Y = \jmath_Z \circ g$.\\

Finally, $\widetilde{g \circ f}: (\mathcal{C}^{\infty}(\mathbb{R}^{X}),\Phi_X) \rightarrow (\mathcal{C}^{\infty}(\mathbb{R}^{Z}),\Phi_Z)$ is the unique $\mathcal{C}^{\infty}-$homo\-morphism such that:

$$\xymatrixcolsep{5pc}\xymatrix{
X \ar[r]^{\jmath_X} \ar[dr]_{\jmath_Z \circ (g\circ f)} & U(\mathcal{C}^{\infty}(\mathbb{R}^{X})) \ar@{-->}[d]^{U(\widetilde{g\circ f})}\\
 & U(\mathcal{C}^{\infty}(\mathbb{R}^{Z}))
}$$

commutes, that is, such that $U(\widetilde{g \circ f})\circ \jmath_X = \jmath_Z \circ (g \circ f)$.\\

We are going to show that $\widetilde{g}\circ \widetilde{f}: (\mathcal{C}^{\infty}(\mathbb{R}^{X}),\Phi_X) \rightarrow (\mathcal{C}^{\infty}(\mathbb{R}^{Z}),\Phi_Z)$ also has this property.\\

In fact,

\begin{multline*}
    U(\widetilde{f})\circ \jmath_X = \jmath_Y \circ f \Rightarrow U(\widetilde{g})\circ (U(\widetilde{f})\circ \jmath_X) = (U(\widetilde{g})\circ \jmath_Y) \circ f \Rightarrow \\
    \Rightarrow (U(\widetilde{g})\circ U(\widetilde{f}))\circ \jmath_X = (\jmath_Z \circ g) \circ f
\end{multline*}

and thus

$$U(\widetilde{g}\circ \widetilde{f})\circ \jmath_X = \jmath_Z \circ (g \circ f).$$

By uniqueness it follows that:

$$L_1(g \circ f) = L_1(g) \circ L_1(f),$$

so $L = (L_0,L_1)$ defines a functor.\\

We claim that:

$$\begin{array}{cccc}
\phi_{X,(A,\Phi)}: & {\rm Hom}_{\rm \bf Set}\,(X, A) & \rightarrow & {\rm Hom}_{\mathcal{C}^{\infty}{\rm \bf Rng}}\,((L(X),\Phi_X),(A,\Phi))\\
                   & ( X \stackrel{\varphi}{\rightarrow} A) & \mapsto &  (L(X),\Phi_X) \stackrel{\widetilde{\varphi}}{\rightarrow} (A,\Phi)

\end{array}$$

where $\widetilde{\varphi}$ is the unique $\mathcal{C}^{\infty}-$homomorphism such that $U(\widetilde{\varphi})\circ \jmath_X = \varphi$

is a natural bijection, so $L \dashv U$.\\

$\phi_{X, (A,\Phi)}$ is surjective. Given any $\mathcal{C}^{\infty}-$homomorphism $\widetilde{\varphi}: (L(X), \Phi_X) \rightarrow (A,\Phi)$, taking $\varphi:= U(\widetilde{\varphi}) \circ \jmath_X : X \rightarrow A$ we have $L(\varphi) = \widetilde{\varphi}$.\\

Given $\varphi, \psi: X \to A$ such that $\phi_{X, (A,\Phi)}(\varphi) = \widetilde{\varphi} = \widetilde{\psi} = \phi_{X, (A,\Phi)}(\psi)$, we have:

$$U(\widetilde{\varphi}) = U(\widetilde{\psi})$$

and
$$\varphi = U(\widetilde{\varphi}) \circ \jmath_X = U(\widetilde{\psi}) \circ \jmath_X = \psi$$

so $\phi_{X, (A,\Phi)}$ is a bijection.

\end{proof}

Let $(A,\Phi)$ be a $\mathcal{C}^{\infty}-$ring and let $X \subseteq A$. Given $\iota^{A}_{X}: X \hookrightarrow A$, there is a unique $\mathcal{C}^{\infty}-$homomorphism $\widetilde{\iota^{A}_{X}}: (L(X),\Phi_X) \to (A,\Phi)$ such that  the following diagram commutes:

\begin{equation}\label{carrasco}\xymatrixcolsep{5pc}\xymatrix{
X \ar[r]^{\eta_X} \ar[dr]_{\iota^{A}_{X}} & U(L(X)) \ar[d]^{U(\widetilde{\iota^{A}_{X}})}\\
 & U(A,\Phi)
}
\end{equation}

We claim that if $\langle X\rangle = A$, then $\widetilde{\iota^{A}_{X}}$ is surjective.\\

Indeed,

$$X = \iota^{A}_{X}[X] = {\rm im}(\iota^{A}_{X})$$

and since $U(\widetilde{\iota^{A}_{X}})\circ \eta_X = \iota^{A}_{X}$, we have:

$${\rm im}(\iota^{A}_{X}) = {\rm im}(U(\widetilde{\iota^{A}_{X}})\circ \eta_X).$$

Since the diagram \eqref{carrasco} commutes, on the other hand,

$${\rm im}(U(\widetilde{\iota^{A}_{X}})\circ \eta_X) \subseteq {\rm im}\,(U(\widetilde{\iota^{A}_{X}}))$$

thus

$$X \subseteq {\rm im}\,(U(\widetilde{\iota^{A}_{X}}))$$

and

$$\langle X \rangle \subseteq \langle {\rm im}\,(U(\widetilde{\iota^{A}_{X}})) \rangle = \widetilde{\iota^{A}_{X}}[A]$$

Since $X$ generates $A$ and $\langle {\rm im}\,(U(\widetilde{\iota^{A}_{X}}))\rangle =   {\rm im}\,(\widetilde{\iota^{A}_{X}})$ is a $\mathcal{C}^{\infty}-$subring of $(A,\Phi)$, it follows that::

$$\langle X \rangle = A \subseteq {\rm im}(\widetilde{\iota^{A}_{X}}) \subseteq A$$

so ${\rm im}\,(\widetilde{\iota^{A}_{X}}) = A$ and $\iota^{A}_{X}$ is surjective.\\

In particular, taking $X=A$ yields $\iota^{A}_{A} = {\rm id}_{A}$, and since $\varepsilon_A = \phi_{A,(A,\Phi)}({\rm id}_A) = \widetilde{{\rm id}_{(A,\Phi)}}$, we have:

$$\xymatrixcolsep{5pc}\xymatrix{
A \ar[r]^{\eta_A} \ar[dr]_{{\rm id}_A} & U(L(A)) \ar[d]^{U(\varepsilon_A)}\\
 & U(A,\Phi)
}$$

so ${\rm im}\,(\varepsilon_A) = (A,\Phi)$, and $\varepsilon_A$ is surjective. Now, given any $\mathcal{C}^{\infty}-$ring $(A,\Phi)$ we have the surjective morphism:

$$\varepsilon_A: L(U(A,\Phi)) \twoheadrightarrow (A,\Phi).$$

We have seen that since $\varepsilon_A$  is a $\mathcal{C}^{\infty}-$homomorphism,  $\ker(\varepsilon_A)$ is a $\mathcal{C}^{\infty}-$congru\-ence. By the \textbf{Fundamental Theorem of the $\mathcal{C}^{\infty}-$Isomorphism} we have:\\

$$(A,\Phi) \cong \dfrac{L(A)}{\ker(\varepsilon_A)}$$

\section{Other Constructions}\label{oc}

In this section we describe further results and constructions involving $\mathcal{C}^{\infty}-$rings.\\

\subsection{Ring-Theoretic Ideals and $\mathcal{C}^{\infty}-$Congruences}\label{coquette}

Our next goal is to classify the congruences of any $\mathcal{C}^{\infty}-$ring. We shall see that they are classified by their ring-theoretic ideals.\\

We begin with the finitely generated case.\\

An interesting result, which is a consequence of \textbf{Hadamard's Lemma}, \textbf{Theorem \ref{hadamard}}, is the description of the ideals of finitely generated $\mathcal{C}^{\infty}-$rings in terms of $\mathcal{C}^{\infty}-$congruences.\\

For the reader's benefit, we state and prove the following result:

\begin{theorem}{\textbf{\emph{(Hadamard's Lemma)}}}\label{hadamard}For every smooth function $f \in \mathcal{C}^{\infty}(\mathbb{R}^n)$  there are smooth functions $g_1, \cdots, g_n \in \mathcal{C}^{\infty}(\mathbb{R}^{2n})$ such that 
$$\forall (x_1, \cdots, x_n), (y_1, \cdots, y_n) \in \mathbb{R}^n$$
$$f(x_1, \cdots, x_n) - f(y_1, \cdots, y_n)=  \sum_{i=1}^{n} (x_i-y_i)\cdot g_i(x_1, \cdots, x_n, y_1, \cdots, y_n)$$
\end{theorem}
\begin{proof}
Holding $(v_1, \cdots, v_n) = \vec{v} \in \mathbb{R}^n$ fixed, we define:
$$h(t)=f(\vec{y} + t\cdot (\vec{x} - \vec{y})).$$
We have:
$$f(\vec{x})-f(\vec{y}) = \displaystyle\int_{0}^{1}h'(t)dt = \displaystyle\int_{0}^{1} \sum_{i=1}^{n} \dfrac{\partial f}{\partial v_i}(\vec{y} + t\cdot (\vec{x} - \vec{y})))\cdot (x_i -y_i) dt,$$
where the second equality uses the chain rule.\\

The result follows by putting:
$$g_i(\vec{x}, \vec{y}) = \displaystyle\int_{0}^{1}\dfrac{\partial f}{\partial v_i}(\vec{y} + t\cdot (\vec{x} - \vec{y}))dt$$
\end{proof}

\begin{proposition}\label{Monica}Given a finitely generated $\mathcal{C}^{\infty}-$ring $(A,\Phi)$, considering the forgetful functor given in \textbf{Remark \ref{Rock}}, $\widetilde{U}: \mathcal{C}^{\infty}{\rm \bf Rng} \rightarrow {\rm \bf CRing}$, we have that if $I$ is a subset of $A$ that is an ideal (in the ordinary ring-theoretic sense) in   $\widetilde{U}(A,\Phi)$, then $\widehat{I} = \{ (a,b) \in A \times A | a - b \in I\}$ is a $\mathcal{C}^{\infty}-$congruence in $A$.
\end{proposition}
\begin{proof}
(Cf. p. 18 of \cite{MSIA})\\

Let $n \in \mathbb{N}$ and $f \in \mathcal{C}^{\infty}(\mathbb{R}^n,\mathbb{R})$.\\

We have to show that if for every $i \in \{ 1, \cdots, n\}$ we have $(a_i,b_i) \in \widehat{I}$, that is, $a_i - b_i \in I$,  then:

$$ \Phi^{(2)}(f)((a_1,b_1), \cdots, (a_n,b_n)) = (\Phi(f)(a_1, \cdots, a_n),\Phi(f)(b_1, \cdots, b_n)) \in \widehat{I},$$

that is, $\Phi(f)(a_1, \cdots, a_n) - \Phi(f)(b_1, \cdots, b_n) \in I$.\\

Suppose that $(a_1,b_1), \cdots, (a_n,b_n) \in A \times A$ are such that $a_i - b_i \in I$ for every $i \in \{1, \cdots,n \}$. By \textbf{Hadamard's Lemma}, there are smooth functions $g_1, \cdots, g_n : \mathbb{R}^n \times \mathbb{R} \to \mathbb{R}$ such that for all $x = (x_1, \cdots, x_n)$ and $y=(y_1, \cdots, y_n) \in \mathbb{R}^n$,

$$f(x)-f(y) = \sum_{i=1}^{n}(x_i - y_i)\cdot g_i(x,y).$$

$\Phi$ preserves this equation, \textit{i.e.},

\begin{multline*}\Phi(f)(a_1, \cdots, a_n) - \Phi(f)(b_1, \cdots, b_n) =\\
=\sum_{i=1}^{n}\underbrace{(a_i - b_i)}_{\in I}\cdot \Phi(g_i)(a_1, \cdots, a_n, b_1, \cdots, b_n) \in I,\end{multline*}

and the result follows.
\end{proof}

The following result tells us that the ideals of the $\mathcal{C}^{\infty}-$rings classify its congruences.\\

\begin{proposition}\label{Cemil}Given any finitely generated $\mathcal{C}^{\infty}-$ring $(A, \Phi)$, let ${\rm Cong}\,(A,\Phi)$ denote the set of all the $\mathcal{C}^{\infty}-$congruences in $A$ and let $\mathfrak{I}(A,\Phi)$ denote the set of all ideals of $A$. The following map is a bijection:

$$\begin{array}{cccc}
\psi_A: & {\rm Cong}\,(A,\Phi) & \rightarrow & \mathfrak{I}(A, \Phi)\\
    & R & \mapsto & \{ x \in A | (x,0) \in R \}
\end{array}$$
whose inverse is given by:
$$\begin{array}{cccc}
\varphi_A: & \mathfrak{I}(A, \Phi) & \rightarrow & {\rm Cong}\,(A, \Phi)\\
    & I & \mapsto & \{ (x,y) \in A\times A | x - y \in I \}
\end{array}$$
\end{proposition}
\begin{proof}
\textbf{Proposition \ref{Monica}} assures that $\varphi_A$ maps ideals of $(A,\Phi)$ to $\mathcal{C}^{\infty}-$congruences.\\

Also, given a $\mathcal{C}^{\infty}-$congruence $R$ in $A$, we have that $\psi_A(R)$ is an ideal.\\

\textbf{Claim:} $\{ x \in A | (x,0) \in R\}$ is closed under $\Phi(+)$ and $\Phi(-)$, since for any $x,y \in A$ such that $(x,0),(y,0) \in R$, we have 
$$\Phi^{(2)}(+)((x,0),(y,0))= (\Phi(+)(x,y),\Phi(+)(0,0)) = (x+y,0)\in R,$$ for $R$ is a $\mathcal{C}^{\infty}-$congruence and $(R, {\Phi^{(2)}}')$ is a $\mathcal{C}^{\infty}-$subring of $(A\times A, \Phi^{(2)})$.\\

Also, given $x \in A$ such that $(x,0) \in R$, since $(R, {\Phi^{(2)}}')$ is a $\mathcal{C}^{\infty}-$subring of $(A \times A, \Phi^{(2)})$, we have ${\Phi^{(2)}}'(-)(x,0) = (-x,0) \in R$.\\

Finally, given $x \in A$ such that $(x,0)\in R$ and any $y \in A$, we have both $(x,0), (y,y)\in R$, and since $R$ is a $\mathcal{C}^{\infty}-$congruence, and thus $(R, {\Phi^{(2)}}')$ is a $\mathcal{C}^{\infty}-$subring of $(A\times A,\Phi^{(2)})$, we have:

$${\Phi^{(2)}}'(\cdot)((x,0),(y,y)) = (x \cdot y, y \cdot 0) = (x \cdot y, 0) \in R$$

so $x\cdot y \in \{ x \in A | (x,0) \in R \}$. Hence $\{ x \in A | (x,0) \in R\}$ is an ideal in $(A,\Phi)$.\\

Now we are going to show that $\psi_A$ and $\varphi_A$ are inverse functions.\\

Given $R \in {\rm Cong}\,(A,\Phi)$, we have:

$$\varphi_A(\psi_A(R)) = \varphi_A(\{ x \in A | (x,0) \in R\}) = \{ (x,y) \in A \times A | (x-y,0) \in R \} = R$$

so $\varphi_A \circ \psi_A = {\rm id}_{{\rm Cong}\,(A,\Phi)}$.\\

Given $I \in \mathfrak{I}\,(A)$, we have:

$$\psi_A(\varphi_A(I)) = \psi_A(\{ (x,y) \in A \times A | x - y \in I\}) = \{ z \in A | z - 0 \in I \} = I,$$

so $\psi_A \circ \varphi_A = {\rm id}_{\mathfrak{I}\,(A,\Phi)}$.
\end{proof}

For any $\mathcal{C}^{\infty}-$ring $(A,\Phi)$ we have the function:

 $$\begin{array}{cccc}
     \psi_A: & {\rm Cong}\,(A,\Phi) & \rightarrow & \mathfrak{I}(A,\Phi) \\
      & R & \mapsto & \{ a \in A | (a,\Phi(0)) \in R \}
   \end{array}$$

and whenever $(A,\Phi)$ is a finitely generated $\mathcal{C}^{\infty}-$ring, we have seen in \textbf{Proposition \ref{Cemil}} that $\psi_A$ has $\varphi_A$ as inverse - which is a consequence of \textbf{Hadammard's Lemma}.\\

In order to show that $\psi_A$ is a bijection for any $\mathcal{C}^{\infty}-$ring $(A,\Phi)$, first we decompose it as a directed colimit of its finitely generated $\mathcal{C}^{\infty}-$subrings (cf. \textbf{Theorem \ref{LimMono}}):

$$(A,\Phi) \cong \varinjlim_{(A_i,\Phi_i) \subseteq_{{\rm f.g.}} (A,\Phi)}(A_i,\Phi\upharpoonright_{A_i})$$

and then we use \textbf{Proposition \ref{Cemil}} to obtain a bijection:

$$\widetilde{\varphi}: \varprojlim_{(A_i,\Phi_i) \subseteq_{{\rm f.g.}} (A,\Phi)}\mathfrak{I}(A_i,\Phi_i) \rightarrow \varprojlim_{(A_i,\Phi_i) \subseteq_{{\rm f.g.}} (A,\Phi)}{\rm Cong}\,(A_i,\Phi_i)$$

such that for every $(A_i,\Phi_i) \subseteq_{{\rm f.g.}} (A,\Phi)$ the following diagram commutes:

$$\xymatrixcolsep{5pc}\xymatrix{
\varprojlim_{(A_i,\Phi_i) \subseteq_{{\rm f.g.}} (A,\Phi)} \mathfrak{I}(A_i,\Phi_i) \ar[d] \ar[r]^{\widetilde{\varphi}} & \varprojlim_{(A_i,\Phi_i) \subseteq_{{\rm f.g.}} (A,\Phi)}{\rm Cong}\,(A_i,\Phi_i)\ar[d]\\
\mathfrak{I}(A_i,\Phi_i) \ar[r]_{\varphi_{A_i}} & {\rm Cong}(A_i,\Phi_i)}$$

where the vertical downward arrows are the canonical arrows of the projective limit.\\

Finally we show that there is a bijective correspondence, $\alpha$, between $$\varprojlim_{(A_i,\Phi_i) \subseteq_{{\rm f.g.}} (A,\Phi)} \mathfrak{I}(A_i,\Phi_i)$$ and $\mathfrak{I}\left( \varinjlim_{(A_i,\Phi_i) \subseteq_{{\rm f.g.}} (A,\Phi)}(A_i,\Phi_i)\right)$, and a bijective correspondence, $\beta$, between $\varprojlim_{(A_i,\Phi_i) \subseteq_{{\rm f.g.}} (A,\Phi)} {\rm Cong}\,(A_i,\Phi_i)$ and ${\rm Cong}\left( \varinjlim_{(A_i,\Phi_i) \subseteq_{{\rm f.g.}} (A,\Phi)}(A_i,\Phi_i)\right)$. In fact, the bijections $\alpha, \beta, \widetilde{\varphi}$ and $\psi$ are complete lattices isomorphisms.\\

By composing these bijections we prove that the congruences of $(\mathcal{C}^{\infty}(\mathbb{R}^{E}),\Phi_E)$ are classified by the ring-theoretic ideals of $(\mathcal{C}^{\infty}(\mathbb{R}^{E}),\Phi_E)$.\\

\begin{theorem}\label{LimMono} Let $(A, \Phi)$ be any $\mathcal{C}^{\infty}-$ring. There is a directed system of $\mathcal{C}^{\infty}-$rings, $((A_i, \Phi_i), \alpha_{ij})$, where each $(A_i, \Phi_i)$ is a finitely generated $\mathcal{C}^{\infty}-$ring and each $\alpha_{ij}: (A_i, \Phi_i) \to (A_j, \Phi_j)$ is a monomorphism such that:
$$(A,\Phi) \cong \varinjlim (A_i,\Phi_i)$$
\end{theorem}
\begin{proof}
Let $\mathcal{P}_f$ be the set of all finite subsets of $A$, on which we consider the partial order defined by inclusion, \textit{i.e.}, $(\forall i \in I)(\forall j \in I)(i \preceq j \leftrightarrow i \subseteq j)$.\\

Let $(A_i, \Phi_i) := \langle i \rangle$ denote the intersection of  all $\mathcal{C}^{\infty}-$subrings $(B,\Phi_B)$ of $(A, \Phi)$ which contain $i$, \textit{i.e.},
$$(A_i, \Phi_i) = (\langle i \rangle, \Phi_i) = \bigcap_{i \subset B} (B, \Phi') $$
where $\Phi_i = \Phi'$ is the $\mathcal{C}^{\infty}-$structure described in \textbf{Proposition \ref{Zimbabue}}

Note that $I = \{ \langle i \rangle = (A_i, \Phi_i)| i \in \mathcal{P}_f \}$, partially ordered by the inclusion, is a directed set, since it is not empty and given any $\langle i \rangle$ and $\langle j \rangle$ we can take, for instance, $k = i \cup j \in \mathcal{P}_f$ such that $\langle i \rangle \subseteq \langle k \rangle$ and $\langle j \rangle \subseteq \langle k \rangle$.\\

By \textbf{Proposition \ref{xiquexique}}, and using its notation,

$$\bigcup_{i \in \mathcal{P}_f} (\langle i \rangle ,\Phi')$$

is a $\mathcal{C}^{\infty}-$subring of $(A,\Phi)$. By construction, each $(A_i, \Phi_i)$ is a finitely generated $\mathcal{C}^{\infty}-$subring of $(A,\Phi)$.\\

We clearly have the following set-theoretic equality:
$$A = U(A, \Phi) = U \left( \bigcup_{i \in \mathcal{P}_f} (\langle i \rangle , \Phi')\right) = \bigcup_{i \in \mathcal{P}_f} \langle i \rangle,$$
hence the equality $\Phi' = \Phi$, so:

$$(A,\Phi) =  \bigcup_{i \in \mathcal{P}_f} (\langle i \rangle, \Phi')$$

For each $i \in I$ we define $A_i = \langle i \rangle$ and take $D(i) := (A_i, \Phi_i)$. Whenever $i \subseteq j$, we have the inclusion  $D(i \subseteq j):=\alpha_{ij}: (\langle i \rangle, \Phi_i) \hookrightarrow (\langle j \rangle, \Phi_j)$ which makes the following triangle commute:
$$\xymatrix{
(\langle i \rangle, \Phi_i) \ar[dr]_{\alpha_i}\ar[rr]^{\alpha_{ij}} & & (\langle j \rangle, \Phi_j )\ar[dl]^{\alpha_j} \\
    & \left(\cup_{i \in \mathcal{P}_f} A_i, \Phi' \right)&
}$$
where $\alpha_i : (\langle i \rangle, \Phi_i) \to (A, \Phi)$ and $\alpha_j : (\langle j \rangle, \Phi_j) \to (A, \Phi)$ are the $\mathcal{C}^{\infty}-$homo\-morphisms of inclusion. \\

Consider the following diagram, where $\mathcal{P}_f$ is viewed as a category:
$$\begin{array}{cccc}
    D: & \mathcal{P}_f & \to & \mathcal{C}^{\infty}{\rm Rng} \\
     & i \rightarrow j & \mapsto & (\langle i\rangle, \Phi_i) \stackrel{\alpha_{ij}}{\rightarrow} (\langle j \rangle, \Phi_j)
  \end{array}$$

We claim that:
$$\varinjlim_{i \in \mathcal{P}_f} D(i) \cong (A, \Phi)$$

Let $(B,\Psi)$ be any $\mathcal{C}^{\infty}-$ring and $h_i: (\langle i \rangle, \Phi_i) \to (B, \Psi)$ and $h_j: (\langle j \rangle, \Phi_j) \to (B, \Psi)$ be any two $\mathcal{C}^{\infty}-$homomorphisms. We are going to show that there exists a unique $\mathcal{C}^{\infty}-$homomorphism $h: (\cup_{i \in \mathcal{P}_f}A_i, \Phi) \to (B, \Psi)$ such that the following diagram commutes:
$$\xymatrix{
(\langle i \rangle, \Phi_i) \ar@/_2pc/[ddr]_{h_i}\ar[dr]_{\alpha_i}\ar[rr]^{\alpha_{ij}} & & (\langle j \rangle, \Phi_j) \ar[dl]^{\alpha_j} \ar@/^2pc/[ddl]^{h_j} \\
    & \left(\cup_{i \in \mathcal{P}_f} A_i, \Phi \right) \ar@{.>}[d]^{\exists ! h}&  \\
    &   (B, \Psi) &
}$$

We now define the function which underlies $h$ in the following way: given $a \in \cup_{i \in \mathcal{P}_f} A_i$ there is some $i_0 \in \mathcal{P}_f$ such that $a \in i_0$. For any $j_0 \in \mathcal{P}_f$ such that $a \in j_0$, since $I$ is directed, there is some $k_0 \in \mathcal{P}_f$ such that $k_0 \supseteq i_0, j_0$, so we have $a \in k_0$ and the following commutative diagram:
$$\xymatrix{
\langle i_0 \rangle \ar[r]^{\alpha_{i_0k_0}} \ar[dr]_{h_{i_0}} & \langle k_0 \rangle \ar[d]^{h_{k_0}} &  \langle j_0 \rangle \ar[l]_{\alpha^{j_0k_0}} \ar[dl]^{h_{j_0}}\\
  & B &
}$$
so:
$$h_{i_0}(a) = h_{k_0}(\alpha_{i_0k_0}(a)) = h_{k_0}(a) = h_{k_0}(\alpha_{j_0k_0}(a)) = h_{j_0}(a).$$

Hence, given $a \in \cup_{i \in \mathcal{P}_f} A_i$, choose any $i_0 \in \mathcal{P}_f$ such that $a \in i_0$ and define $h(a)  = h_{i_0}(a)$ (up to here $h$ is merely a function, not a $\mathcal{C}^{\infty}-$homomorphism).\\

Now we prove that $h$ is a $\mathcal{C}^{\infty}-$homomorphism.\\

Let $f \in \mathcal{C}^{\infty}(\R^n, \mathbb{R})$ be any $n-$ary function symbol. We must show that the following diagram commutes:
$$\xymatrixcolsep{5pc}\xymatrix{\left( \cup_{i \in \mathcal{P}_f} A_i\right)^n \ar[d]_{h^{(n)}}\ar[r]^{\Phi(f)} & \cup_{i \in \mathcal{P}_f} A_i \ar[d]^{h}\\
B^n \ar[r]^{\Psi(f)} & B}$$
that is,
$$(\forall a_1)(\forall a_2)\cdots(\forall a_n)(\Psi(f)(h(a_1), h(a_2), \cdots, h(a_n))) = h(\Phi(f)(a_1, a_2, \cdots,a_n)).$$

Let $i_1, i_2, \cdots, i_n \in \mathcal{P}_f$ be such that $a_1 \in \langle i_1\rangle, a_2 \in \langle i_2 \rangle, \cdots, a_n \in \langle i_n \rangle$, and take $j \in \mathcal{P}_f$ such that $j \supseteq i_1, \cdots, i_n$, so
$$a_1, a_2, \cdots, a_n \in \langle j \rangle$$

For any $\ell \in \{ 1,2, \cdots, n \}$, $h_{i}(a_{\ell}) = h_j(a_{\ell})$, so $h(a_1) = h_j(a_1), h(a_2) = h_j(a_2), \cdots, h(a_n) = h_j(a_n)$.\\

We have:
\begin{multline*}
  \Psi(f)(h(a_1), h(a_2), \cdots, h(a_n)) = \Psi(f)(h_j(a_1), h_j(a_2), \cdots, h_j(a_n)) \stackrel{{\rm (1)}}{=} \\
  = h_j(\Phi_j(f) (a_1,a_2, \cdots, a_n)) \stackrel{{\rm (2)}}{=} h_j(\Phi(f)(a_1,a_2, \cdots, a_n)) \stackrel{{\rm (3)}}{=} h(\Phi_i(f)(a_1,a_2,\cdots,a_n))
\end{multline*}

where (1) is due to the fact that $h_j : (\langle i \rangle, \Phi_i) \to (B,\Psi)$ is a $\mathcal{C}^{\infty}-$homomorphism, (2) is due to the fact that $(\langle j \rangle, \Phi_j)$ is a $\mathcal{C}^{\infty}-$subring of $(\cup_{i \in \mathcal{P}_f}A_i, \Phi)$  and (3) occurs by the very definition of $h$. The uniqueness is granted by the property that $h$ must satisfy as a function.\\

Under those circumstances,
$$\left( \bigcup_{i \in \mathcal{P}_f}\langle i \rangle, \Phi \right) \cong \varinjlim_{i \in I} (A_i, \Phi_i)$$

so

$$(A,\Phi) = \left( \bigcup_{i \in \mathcal{P}_f}\langle i \rangle, \Phi \right) \cong \varinjlim_{i \in \mathcal{P}_f} (A_i, \Phi_i)$$
\end{proof}

\begin{remark}\label{Orangotango}Let $((A_i,\Phi_i)_{i \in I}, \alpha_{ij}: (A_i,\Phi_i) \to (A_j,\Phi_j))$ be an inductive directed system of $\mathcal{C}^{\infty}-$rings, and let $(J_{\ell})_{\ell \in I} \in \varprojlim_{\ell \in I} \mathfrak{I}(A_{\ell},\Phi_{\ell})$.

For every $i \in I$, we have the maps:

$$\begin{array}{cccc}
    {\alpha_i}^{*}: & \mathfrak{I}\left( \varinjlim_{i \in I} (A_i, \Phi_i)\right) & \rightarrow & \mathfrak{I}(A_i,\Phi_i) \\
     & J & \mapsto & \alpha_i^{\dashv}[J]
  \end{array}$$

$$\begin{array}{cccc}
    \widehat{\alpha}: & \mathfrak{I}\left( \varinjlim_{i \in I} (A_i, \Phi_i)\right) & \rightarrow & \mathfrak{I}(A_i,\Phi_i) \\
     & J & \mapsto & (\alpha_i^{\dashv}[J])_{i \in I}
  \end{array}$$

and the following limit diagram:

$$\xymatrixcolsep{5pc}\xymatrix{
 & \varprojlim_{i \in I} \mathfrak{I}(A_i,\Phi_i) \ar[dl]_{\alpha_i^{*}} \ar[dr]^{\alpha_j^{*}} & \\
\mathfrak{I}(A_i, \Phi_i) & & \mathfrak{I}(A_j,\Phi_j) \ar[ll]^{\alpha_{ij}^{*}}
},$$

where:

$$\begin{array}{cccc}
    {\alpha_{ij}}^{*}: & \mathfrak{I}(A_j,\Phi_j) & \rightarrow & \mathfrak{I}(A_i,\Phi_i) \\
     & J_j & \mapsto & {\alpha_{ij}}^{\dashv}[J_j]
  \end{array}$$

We note, first, that $\alpha$  maps ideals of $\varinjlim_{i \in I} (A_i,\Phi_i)$ to an element of $\varprojlim_{i \in I} \mathfrak{I}(A_i,\Phi_i)$.

In fact, given any ideal $J \in \mathfrak{I}\left( \varinjlim_{i \in I} (A_i, \Phi_i)\right)$, since for every $i \in I$, ${\alpha_i}$ is a $\mathcal{C}^{\infty}-$homomorphism, it follows that for every $i \in I$, $\alpha_i^{*}(J) = {\alpha_i}^{\dashv}[J]$ is an ideal of $(A_i, \Phi_i)$, so $\alpha(J) = (\alpha_i^{*}(J))_{i \in I} \in \displaystyle\prod_{i \in I} \mathfrak{I}(A_i, \Phi_i)$. Moreover, the family $(\alpha_i^{*}(J))_{i \in I}$ is compatible, since:

$$(\forall i \in I)(\forall j \in I)(i \preceq j)(\alpha_j \circ \alpha_{ij} = \alpha_i \Rightarrow \alpha_i^{*} = {\alpha_{ij}}^{*}\circ \alpha_j^{*})$$

and

$$(\forall i \in I)(\forall j \in I)(i \preceq j)({\alpha_i}^{*}(J)= ({\alpha_{ij}}^{*}\circ \alpha_j^{*})(J) = {\alpha_{ij}}^{*}(\alpha_j^{*}(J))$$

so

$$\alpha(J) = (\alpha_i^{*}(J))_{i \in I} \in \varprojlim_{i \in I} \mathfrak{I}(A_i, \Phi_i).$$
\end{remark}

\begin{proposition}\label{paulinho}Let $(I, \preceq)$ be a directed partially ordered set and
$$\{ (A_i, \Phi_i), \alpha_{ij}: (A_i, \Phi_i) \to (A_j,\Phi_j)\}_{i,j \in I}$$
be a directed inductive system of $\mathcal{C}^{\infty}-$rings and $\mathcal{C}^{\infty}-$homomorphisms. For every $i \in I$, we have the map:

$$\begin{array}{cccc}
    {\alpha_i}^{*}: & \mathfrak{I}\left( \varinjlim_{i \in I} (A_i, \Phi_i)\right) & \rightarrow & \mathfrak{I}(A_i,\Phi_i) \\
     & J & \mapsto & \alpha_i^{\dashv}[J]
  \end{array}$$

By the universal property of $\varprojlim_{i \in I} \mathfrak{I}(A_i,\Phi_i)$, there is a unique $\alpha: \mathfrak{I}\left( \varinjlim_{i \in I} (A_i,\Phi_i)\right) \to \varprojlim_{i \in I} \mathfrak{I}(A_i, \Phi_i)$ such that for every $i \in I$ the following diagram commutes:

$$\xymatrixcolsep{5pc}\xymatrix{
\mathfrak{I}\left( \varinjlim_{i \in I} (A_i,\Phi_i)\right) \ar[d]_{\exists ! \alpha} \ar@/^2pc/[rdd]^{\alpha_i^{*}} & \\
\varprojlim_{i \in I} \mathfrak{I}(A_i,\Phi_i) \ar[dr]_{\pi_i\upharpoonright_{\varprojlim_{i \in I} \mathfrak{I}(A_i,\Phi_i)}} & \\
 & \mathfrak{I}(A_i,\Phi_i)}$$

that is, such that ${\alpha_i}^{*} = \pi_i\upharpoonright_{\varprojlim_{i \in I} \mathfrak{I}(A_i,\Phi_i)} \circ \alpha$.

We have, thus:

$$\begin{array}{cccc}
    \alpha: & \mathfrak{I}\left( \varinjlim_{i \in I} (A_i, \Phi_i)\right) & \rightarrow & \varprojlim_{i \in I} \mathfrak{I}(A_i,\Phi_i) \\
     & J & \mapsto & (\alpha_i^{*}(J))_{i \in I}
  \end{array}$$

For any $(J_i)_{i \in I} \in \varprojlim_{i \in I} \mathfrak{I}(A_i,\Phi_i)$, $\bigcup_{i \in I}\alpha_i[J_i]$ is an ideal of $\varinjlim_{i \in I} (A_i,\Phi_i)$ and the map:

$$\begin{array}{cccc}
    \alpha': & \varprojlim_{i \in I} \mathfrak{I}(A_i,\Phi_i) & \rightarrow & \mathfrak{I}\left( \varinjlim_{i \in I} (A_i, \Phi_i)\right) \\
     & (J_i)_{i \in I} & \mapsto & \bigcup_{i \in I}\alpha_i[J_i]
  \end{array}$$

is an inverse for $\alpha$, so $\alpha$ is a bijection.
\end{proposition}
\begin{proof}
In ${\rm \bf Set}$, we have:

$$U\left(\varinjlim_{i \in I} (A_i,\Phi_i)\right) = \dfrac{\bigcup_{i \in I} A_i \times \{ i\}}{\thicksim}$$

where:

$$(a_i,i)\thicksim (a_j,j) \iff (\exists k \in I)(i,j \preceq k)(\alpha_{ik}(a_i)=\alpha_{jk}(a_j))$$

and

$$U \left(\varprojlim_{i \in I} \mathfrak{I}(A_i,\Phi_i)\right) = \left\{ (J_{\ell})_{\ell \in I} \in \displaystyle\prod_{i \in I} \mathfrak{I}(A_i,\Phi_i) | (\forall i,j \in I)(i \preceq j)({\alpha_{ij}}^{*}(J_j) = J_i) \right\}$$

Given the following colimit diagram:

$$\xymatrixcolsep{5pc}\xymatrix{
 & \varinjlim_{i \in I} (A_i,\Phi_i) & \\
(A_i, \Phi_i) \ar[ur]^{\alpha_i} \ar[rr]_{\alpha_{ij}}& & (A_j,\Phi_j) \ar[ul]_{\alpha_j}
}$$

with:

$$\begin{array}{cccc}
    \alpha_i: & (A_i,\Phi_i) & \rightarrow & \varinjlim_{i \in I} (A_i,\Phi_i) \\
     & a_i & \mapsto & [(a_i,i)]
  \end{array}$$

consider the following limit diagram:

$$\xymatrixcolsep{5pc}\xymatrix{
 & \varprojlim_{i \in I} \mathfrak{I}(A_i,\Phi_i) \ar[dl]_{\alpha_i^{*}} \ar[dr]^{\alpha_j^{*}} & \\
\mathfrak{I}(A_i, \Phi_i) & & \mathfrak{I}(A_j,\Phi_j) \ar[ll]^{\alpha_{ij}^{*}}
},$$

where:

$$\begin{array}{cccc}
    {\alpha_{ij}}^{*}: & \mathfrak{I}(A_j,\Phi_j) & \rightarrow & \mathfrak{I}(A_i,\Phi_i) \\
     & J_j & \mapsto & {\alpha_{ij}}^{\dashv}[J_j]
  \end{array}$$










Given any $(J_i)_{i \in I}$, we are going to show that $\alpha'((J_i)_{i \in I}) \in \mathfrak{I}\left( \varinjlim_{i \in I} (A_i,\Phi_i)\right)$.\\

Since $\varinjlim_{i \in I}(A_i, \Phi_i)$ is directed, given $\overline{x}, \overline{y} \in \bigcup_{i \in I} \alpha_i[J_i]$, there are $i,j \in I$, $x_i \in J_i$ and $y_j \in J_j$ such that $\alpha_i(x_i) =[(x_i,i)] = \overline{x}$ and $\alpha_j(y_j) = [(y_j,j)] = \overline{y}$. Also, since the colimit is directed, there is $k \in I$ with $i,j \leq k$ such that:

$$\alpha_{ik}(x_i) \in \alpha_{ik}[J_i], \alpha_{jk}(y_j) \in \alpha_{jk}[J_j],$$

so

$$\alpha_k(\alpha_{ik}(x_i)), \alpha_k(\alpha_{jk}(y_j)) \in \alpha_k[J_k]$$

Since $\alpha_k[J_k]$ is an ideal in $\alpha_k[A_k]$, it follows that:

$$\alpha_k(\alpha_{ik}(x_i)) - \alpha_k(\alpha_{jk}(y_j)) \in \alpha_k[J_k]$$

and

$$\overline{x} - \overline{y} = \alpha_k(\alpha_{ik}(x_i))- \alpha_k(\alpha_{jk}(y_j)) \in \alpha_k[J_k] \subseteq \bigcup_{i \in I}\alpha_i[J_i].$$

Given $\overline{x} \in \varinjlim_{i \in I} \alpha_i[J_i]$ there are some $i \in I$, $x_i \in J_i$ such that $\overline{x} = \alpha_i(x_i) = [(x_i,i)]$, and given  $\overline{a} \in \varinjlim_{i \in I} (A_i,\Phi_i)$, there are some $j \in I$ and some $a_j \in A_j$ such that $\overline{a} = \alpha_j(a_j) = [(a_j,j)]$.\\

Since the colimit is directed, there is some $k \in I$ with $i,j \leq k$ such that:

$$\alpha_{ik}(x_i) \in \alpha_{ik}[J_i], \alpha_{jk}(a_j) \in \alpha_{jk}[A_j]$$

so

$$\alpha_i(x_i) = \alpha_k(\alpha_{ik}(x_i)) \in \alpha_i[\alpha_{ik}[J_i]] = \alpha_k[J_k]$$
and
$$\alpha_j(a_j) = \alpha_k(\alpha_{jk}(a_j)) \in \alpha_j[\alpha_{jk}[J_j]] = \alpha_k[J_k]$$

Since $\alpha_k[J_k]$ is an ideal of $\alpha_k[A_k]$, it follows that:

$$\alpha_{k}(\alpha_{ik}(x_i)) \cdot \alpha_{k}(\alpha_{jk}(a_j)) \in \alpha_k[J_k].$$

Thus,

\begin{multline*}
\overline{x}\cdot \overline{a} = \alpha_i(x_i)\cdot \alpha_j(a_j) =  \alpha_{k}(\alpha_{ik}(x_i)) \cdot \alpha_{k}(\alpha_{jk}(a_j))=\\
=\alpha_k(\alpha_{ik}(x_i))\cdot \alpha_k(\alpha_{jk}(a_j)) \in \alpha_k[J_k] \subseteq \bigcup_{i \in I} \alpha_i[J_i]
\end{multline*}

so $\alpha'$ maps elements of $\varprojlim_{i \in I} \mathfrak{I}(A_i,\Phi_i)$ to ideals of $\varinjlim_{i \in I} (A_i, \Phi_i)$.\\

\textbf{Claim:} $\alpha \circ \alpha' = {\rm id}_{\varprojlim_{i \in I} \mathfrak{I}(A_i,\Phi_i)}$.\\

Given $(J_{\ell})_{\ell \in I} \in \varprojlim_{i \in I} \mathfrak{I}(A_i,\Phi_i)$, we have:

$$\alpha'((J_{\ell})_{\ell \in I}) := \bigcup_{\ell \in I} \alpha_{\ell}[J_{\ell}],$$

so

$$\alpha(\alpha'((J_{\ell})_{\ell \in I})) = \alpha\left( \bigcup_{\ell \in I} \alpha_{\ell}[J_{\ell}]\right):= \left( {\alpha_{i}}^{\dashv}\left[ \bigcup_{\ell \in I} \alpha_{\ell}[J_{\ell}]\right]\right)_{i \in I}.$$

We are going to show that for every $i \in I$,

$${\alpha_i}^{*}(\alpha'((J_{\ell})_{\ell \in I})) = J_i$$

Given $i \in I$, on the one hand we have:

$$J_i \subseteq {\alpha_i}^{\dashv}[\alpha_i[J_i]] \subseteq {\alpha_i}^{\dashv}\left[ \bigcup_{\ell \in I} \alpha_{\ell}[J_{\ell}]\right] = {\alpha_i}^{*}(\alpha'((J_{\ell})_{\ell \in I})) = \bigcup_{\ell \in I} {\alpha_i}^{\dashv}[\alpha_{\ell}[J_{\ell}]]$$

On the other hand, given $a_i \in {\alpha_i}^{*}(\alpha'((J_{\ell})_{\ell \in I})) = {\alpha_i}^{\dashv}\left[ \bigcup_{\ell \in I}\alpha_{\ell}[J_{\ell}]\right]$, that is,  $a_i \in A_i$ such that $\alpha_i(a_i) \in \bigcup_{\ell \in I}{\alpha_{\ell}}[J_{\ell}]$, there is some $j \in I$ and some $b_j \in J_j$ such that:

$$\alpha_i(a_i) = \alpha_j(b_j).$$

Since the system is directed, there is some $k \in I$, $i,j \preceq k$  such that $\alpha_{ik}(a_i) = \alpha_{jk}(b_j)$.\\

By compatibility we have ${\alpha_{jk}}^{\dashv}[J_k]={\alpha_{jk}}^{*}(J_k) = J_j$, so in particular $\alpha_{jk}^{\dashv}[J_k] \supseteq J_j$ and $ \alpha_{jk}(b_j) \in\alpha_{jk}[J_j] \subseteq \alpha_{jk}[{\alpha_{jk}}^{\dashv}[J_k]] \subseteq J_k$.\\

Thus, taking $c_k = \alpha_{jk}(b_j)$, we have:

$$c_k = \alpha_{jk}(b_j) \in \alpha_{jk}^{\dashv}[J_j] \subseteq J_k,$$

that is,

$$c_k \in J_k.$$

For any $k' \geq k$ we have:

$$\alpha_{ik'}(a_i) = \alpha_{kk'}(c_k) \in J_{k'}$$

so

$$a_i \in {\alpha_{ik'}}^{\dashv}[J_{k'}] = J_i.$$

Hence:

$$(\forall i \in I)\left(\bigcup_{\ell \in I}{\alpha_i}^{\dashv}[\alpha_{\ell}[J_{\ell}]] = J_i\right)$$

and

$$\alpha(\alpha'((J_i)_{i \in I})) = (J_i)_{i \in I}.$$

Since $(J_{i})_{i \in I}$ is arbitrary, we have:

$$\alpha \circ \alpha' = {\rm id}_{\varprojlim_{i \in I}\mathfrak{I}(A_i,\Phi_i)}.$$

On the other hand, given $J \in \mathfrak{I}\left( \varinjlim_{i \in I} (A_i,\Phi_i)\right)$, we have $\alpha(J) = ({\alpha_i}^{*}(J))_{i \in I}$ and:

$$\alpha'(\alpha(J)) = \alpha'(({\alpha_i}^{*}(J))_{i \in I}) = \bigcup_{i \in I} \alpha_i[{\alpha_i}^{*}(J)].$$

On the one hand, since for every $i \in I$, $\alpha_i[{\alpha_i}^{*}(J)] = \alpha_i[{\alpha_i}^{\dashv}[J]] \subseteq J$, we have $\alpha'(\alpha(J)) \subseteq J$.\\

Recall that given $J \in \mathfrak{I}\left( \varinjlim_{i \in I}(A_i,\Phi_i)\right)$, we have:

$$J \subseteq \varinjlim_{i \in I} (A_i, \Phi_i) = \bigcup_{i \in I} \alpha_i[A_i],$$

so given $\overline{x} \in J \subseteq \bigcup_{i \in I} \alpha_i[A_i]$, there are  some $i_0 \in I$ and some $x_{i_0} \in {\alpha_{i_0}}^{*}(J) = {\alpha_{i_0}}^{\dashv}[J]$ such that:

$$\overline{x} = [(x_{i_0},i_0)]$$

and since $x_{i_0} \in \alpha_{i_0}^{*}(J)$, we have $[(x_{i_0},i_0)] \in \alpha_{i_0}[{\alpha_{i_0}}^{*}(J)] \subseteq \bigcup_{i \in I} \alpha_i[\alpha_i^{*}(J)]$, so

$$\overline{x} \in \bigcup_{i \in I} \alpha_i[\alpha_i^{*}(J)] = \alpha'(({\alpha_i}^{*}(J))_{i \in I}) = \alpha'(\alpha(J))$$

and:

$$\alpha'(\alpha(J)) = J.$$

Since $J$ is an arbitrary ideal of $\varprojlim_{i \in I}\mathfrak{I}(A_i,\Phi_i)$, it follows that:

$$\alpha' \circ \alpha = {\rm id}_{\mathfrak{I}\left(\varinjlim_{i \in I}(A_i,\Phi_i)\right)}.$$

so $\alpha$ is a bijection whose inverse is $\alpha'$.
\end{proof}

The proof of the following proposition is similar to the proof we just made, so we are going to omit it.\\

\begin{proposition}\label{gogo}Let $(I, \preceq)$ be a directed partially ordered set and $\{ (A_i, \Phi_i), \alpha_{ij}: (A_i, \Phi_i) \to (A_j,\Phi_j)\}_{i,j \in I}$ be a directed inductive system of $\mathcal{C}^{\infty}-$rings and\\ $\mathcal{C}^{\infty}-$homomorphisms. The following function is a bijection:

$$\begin{array}{cccc}
    \beta: & {\rm Cong}\left( \varinjlim_{i \in I} (A_i, \Phi_i)\right) & \rightarrow & \varprojlim_{i \in I} {\rm Cong}(A_i,\Phi_i) \\
     & R & \mapsto & ((\alpha_i \times \alpha_i)^{\dashv}(R))_{i \in I}
  \end{array}$$
whose inverse is given by:
$$\begin{array}{cccc}
    \beta': & \varprojlim_{i \in I} {\rm Cong}(A_i,\Phi_i) & \rightarrow & {\rm Cong}\left( \varinjlim_{i \in I} (A_i, \Phi_i)\right)  \\
     & (R_i)_{i \in I} & \mapsto & \varinjlim_{i \in I} R_i
  \end{array}$$

\end{proposition}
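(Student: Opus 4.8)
The plan is to imitate, \textit{mutatis mutandis}, the proof of \textbf{Proposition \ref{paulinho}}, replacing ideals by congruences, the forward image $\alpha_i[-]$ by $(\alpha_i\times\alpha_i)[-]$, and the preimage $\alpha_i^{\dashv}[-]$ by $(\alpha_i\times\alpha_i)^{\dashv}(-)$. Throughout I would use the concrete description of the directed colimit from \textbf{Theorem \ref{colimite}}, writing $\varinjlim_{i\in I}(A_i,\Phi_i) = \bigcup_{i\in I} A_i\times\{i\}/\!\thicksim$ with $\alpha_i(a)=\overline{(a,i)}$, together with the characterization of a $\mathcal{C}^{\infty}-$congruence as a $\mathcal{C}^{\infty}-$subring of the square (\textbf{Proposition \ref{P}}). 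Unwinding the notation, $\beta'((R_i)_{i\in I}) = \varinjlim_{i\in I} R_i = \bigcup_{i\in I}(\alpha_i\times\alpha_i)[R_i]$, so the two maps are perfectly parallel to the $\alpha,\alpha'$ of the ideal case.

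First I would check that $\beta$ is well defined. Since each $\alpha_i$ is a $\mathcal{C}^{\infty}-$homomorphism, the pullback $(\alpha_i\times\alpha_i)^{\dashv}(R)$ of the equivalence relation $R$ is again an equivalence relation, and it preserves every $\mathcal{C}^{\infty}-$function symbol because $R$ does and $\alpha_i$ commutes with the interpretations; hence it is a $\mathcal{C}^{\infty}-$congruence on $(A_i,\Phi_i)$. Compatibility of the resulting family is immediate from $\alpha_j\circ\alpha_{ij}=\alpha_i$: for $i\preceq j$ one has $(\alpha_{ij}\times\alpha_{ij})^{\dashv}\bigl((\alpha_j\times\alpha_j)^{\dashv}(R)\bigr) = (\alpha_i\times\alpha_i)^{\dashv}(R)$, so $\beta(R)\in\varprojlim_{i\in I}{\rm Cong}(A_i,\Phi_i)$.

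Next I would show $\beta'$ is well defined. Compatibility of $(R_i)_{i\in I}$ means $(\alpha_{ij}\times\alpha_{ij})^{\dashv}(R_j)=R_i$, which yields $(\alpha_{ij}\times\alpha_{ij})[R_i]\subseteq R_j$; thus $\bigl(R_i,(\alpha_{ij}\times\alpha_{ij})\!\upharpoonright_{R_i}\bigr)$ is a directed system of $\mathcal{C}^{\infty}-$rings, and its colimit sits inside $\varinjlim_{i\in I}(A_i\times A_i)\cong(\varinjlim_{i\in I}A_i)^2$ as $\bigcup_{i\in I}(\alpha_i\times\alpha_i)[R_i]$. It remains to verify that this set is a $\mathcal{C}^{\infty}-$congruence. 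Reflexivity and symmetry descend at once from those of each $R_i$; for transitivity, and likewise for closure under $\Phi^{(2)}(f)$ for each $f\in\mathcal{C}^{\infty}(\mathbb{R}^n,\mathbb{R})$, I would lift the finitely many pairs involved to a common index $k\in I$ (using directedness of $I$ and the defining relation $\thicksim$ of the colimit to align the representatives) and then invoke the fact that $R_k$ is a $\mathcal{C}^{\infty}-$congruence, i.e.\ a $\mathcal{C}^{\infty}-$subring of $A_k\times A_k$ by \textbf{Proposition \ref{P}}.

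Finally I would establish $\beta\circ\beta'={\rm id}$ and $\beta'\circ\beta={\rm id}$ by the same index-chasing as in \textbf{Proposition \ref{paulinho}}. For one composite, $(\alpha_i\times\alpha_i)^{\dashv}\bigl(\bigcup_{\ell}(\alpha_\ell\times\alpha_\ell)[R_\ell]\bigr)=R_i$ follows because any pair whose image lands in some $(\alpha_\ell\times\alpha_\ell)[R_\ell]$ can, after passing to a common upper bound $k$, be recognized inside $R_k$ and pulled back via compatibility to $R_i$. For the other, $\bigcup_i(\alpha_i\times\alpha_i)[(\alpha_i\times\alpha_i)^{\dashv}(R)]\subseteq R$ is clear, and equality holds since every pair of $R\subseteq(\varinjlim_{i\in I}A_i)^2$ is represented at a finite stage. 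The main obstacle is precisely the transitivity step together with the verification $\beta\circ\beta'={\rm id}$: both require coherently lifting pairs to a common index and tracking the colimit's equivalence relation, which is the same directedness argument used for ideals but carried out simultaneously on both coordinates.
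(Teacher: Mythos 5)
Your proposal is correct and takes essentially the same approach the paper intends: the paper omits the proof of \textbf{Proposition \ref{gogo}}, stating only that it is similar to that of \textbf{Proposition \ref{paulinho}}, and your argument carries out precisely that analogy (preimages along $\alpha_i\times\alpha_i$, the identification $\beta'((R_i)_{i\in I})=\bigcup_{i\in I}(\alpha_i\times\alpha_i)[R_i]$ inside $(\varinjlim_{i\in I}A_i)^2$, and the same directedness-based index chasing for both composites). The extra verifications you single out — transitivity of $\beta'((R_i)_{i\in I})$ and aligning both coordinates of a pair at a common index — are exactly the points where the congruence case needs slightly more care than the ideal case, and your lifting argument handles them correctly.
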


The following result extends \textbf{Proposition \ref{Cemil}} in the sense that it shows us, with details, that the congruences of any free $\mathcal{C}^{\infty}-$ring are classified by their ring-theoretic ideals (in the finitely generated case it follows from Hadamard's lemma, and this case is used here).

\begin{lemma}\label{Asso}The congruences of $(\mathcal{C}^{\infty}(\mathbb{R}^{E}),\Phi_E)$, the free $\mathcal{C}^{\infty}-$ring determined by the set $E$, are classified by their ring-theoretic ideals.
\end{lemma}
\begin{proof}
By \textbf{Proposition \ref{Monica}}, for any finite $E$ the result holds.\\

Suppose $E$ is any set (not necessarily finite). We have, by definition,

$$(\mathcal{C}^{\infty}(\mathbb{R}^{E}), \Phi_E) \cong \varinjlim_{E' \subseteq_f E} (\mathcal{C}^{\infty}(\mathbb{R}^{E'}, \mathbb{R}), \Phi_{E'})$$

where $\Phi_{E}$ is the $\mathcal{C}^{\infty}-$structure of the colimit.\\

The family of bijections $\{ \psi_{E'}: {\rm Cong}(\mathcal{C}^{\infty}(\mathbb{R}^{E'}),\Phi_{E'}) \rightarrow \mathfrak{I}(\mathcal{C}^{\infty}(\mathbb{R}^{E'}),\Phi_{E'}) \}$ always exist, so it yields a coherent bijection:

$$\widetilde{\psi}:  \varprojlim_{E' \subseteq_f E}{\rm Cong}(\mathcal{C}^{\infty}(\mathbb{R}^{E'}),\Phi_{E'}) \rightarrow \varprojlim_{E' \subseteq_f E} \mathfrak{I}(\mathcal{C}^{\infty}(\mathbb{R}^{E'}),\Phi_{E'}).$$

We have the following commutative diagram:

$$\xymatrixcolsep{5pc}\xymatrix{
{\rm Cong}\,(\mathcal{C}^{\infty}(\mathbb{R}^{E'}), \Phi_{E'}) \ar[r]^{\psi_{E'}} & \mathfrak{I}(\mathcal{C}^{\infty}(\mathbb{R}^{E'}), \Phi_{E'})\\
\varprojlim_{E' \subseteq_f E} {\rm Cong}(\mathcal{C}^{\infty}(\mathbb{R}^{E'}),\Phi_{E'}) \ar[r]^{\widetilde{\psi}} \ar[u] & \varprojlim_{E' \subseteq_f E} \mathfrak{I}(\mathcal{C}^{\infty}(\mathbb{R}^{E'}),\Phi_{E'}) \ar[u] \\
{\rm Cong}(\mathcal{C}^{\infty}(\mathbb{R}^{E}),\Phi_E) \ar[u]^{\alpha} \ar[r]_{\psi_E} & \mathfrak{I}(\mathcal{C}^{\infty}(\mathbb{R}^{E}),\Phi_{E})\ar[u]^{\beta}} $$

where the two upper  vertical arrows are the canonical arrows of the projective limit,
$\alpha$ is given in \textbf{Proposition \ref{paulinho}} and $\beta$ is given in \textbf{Proposition \ref{gogo}}.\\

Since $\alpha, \beta$ and $\widetilde{\psi}$ are bijections, it follows that $\psi_E: {\rm Cong}(\mathcal{C}^{\infty}(\mathbb{R}^{E}),\Phi_E) \rightarrow \mathfrak{I}(\mathcal{C}^{\infty}(\mathbb{R}^{E}),\Phi_{E}) $ is a bijection.
\end{proof}

The following lemma is a well-known result of Universal Algebra applied to $\mathcal{C}^{\infty}-$rings:\\

\begin{lemma}\label{Aracy}Let $(A,\Phi)$ be a $\mathcal{C}^{\infty}-$ring and let $R \in {\rm Cong}\,(A,\Phi)$. Given the quotient $\mathcal{C}^{\infty}-$homomorphism:

$$\begin{array}{cccc}
    q_R: & (A,\Phi) & \rightarrow & \left( \dfrac{A}{R}, \overline{\Phi}\right) \\
     & x & \mapsto & x + R
  \end{array}$$

we have the bijection:

$$\begin{array}{cccc}
    (q_R)_{*}: & \{ S \in {\rm Cong}\,(A,\Phi) | R \subseteq S\} & \rightarrow & {\rm Cong}\left( \dfrac{A}{R}, \overline{\Phi}\right) \\
     & S & \mapsto & \{ (q_R(s),q_R(t))\in \frac{A}{R}\times \frac{A}{R} | (s,t) \in S\}
  \end{array}$$

whose inverse is given by:

$$\begin{array}{cccc}
    (q_R)^*: & {\rm Cong}\left( \dfrac{A}{R}, \overline{\Phi}\right) & \rightarrow &
    \{ S \in {\rm Cong}(A,\Phi) | R \subseteq S\} \\
     & S' & \mapsto & (q_R \times q_R)^{\dashv}[S']
  \end{array}$$
\end{lemma}

As a consequence of the above lemma, we have:\\

\begin{lemma}\label{lambor}Let $(A,\Phi)$ be a $\mathcal{C}^{\infty}-$ring and $R \in {\rm Cong}\,(A,\Phi)$, and suppose that $\psi_A: {\rm Cong}\,(A,\Phi) \rightarrow \mathfrak{I}(A,\Phi)$ is a bijection with an inverse, $\varphi_A: \mathfrak{I}(A,\Phi) \rightarrow {\rm Cong}\,(A,\Phi)$. Under those circumstances, the quotient $\mathcal{C}^{\infty}-$homomorphism:

$$q_R: (A,\Phi) \rightarrow \left( \dfrac{A}{R}, \overline{\Phi}\right)$$

induces a pair of inverse bijections:

$$\begin{array}{cccc}
    (q_R)_{+}: & \mathfrak{I}\left( \dfrac{A}{R}, \overline{\Phi}\right) & \rightarrow & \{ I' \in \mathfrak{I}(A,\Phi) | \psi_A(R) \subseteq I'\} \\
     & J & \mapsto & q_R[J]
  \end{array}$$

and

$$\begin{array}{cccc}
    (p_R)^{-}: & \{ I' \in \mathfrak{I}(A,\Phi) | \psi_A(R) \subseteq I'\} & \rightarrow & \mathfrak{I}\left( \dfrac{A}{R}, \overline{\Phi}\right) \\
     & J' & \mapsto & (q_R)^{\dashv}[J']
  \end{array}$$
\end{lemma}
\begin{proof}
The proof is a direct consequence of \textbf{Lemma \ref{Aracy}} and a diagram chase.
\end{proof}

\begin{proposition}Let $(A,\Phi)$ and $(B,\Psi)$ be two $\mathcal{C}^{\infty}-$rings and let $h: (A,\Phi) \rightarrow (B, \Psi)$ be a surjective $\mathcal{C}^{\infty}-$homomorphism. The following functions are bijections:

$$\begin{array}{cccc}
    h^{*}: & {\rm Cong}\,(B,\Psi) & \rightarrow & \{S \in {\rm Cong}\,(A,\Phi) \mid \ker(h) \subseteq S\} \\
     & R & \mapsto & (h \times h)^{\dashv}[R]
  \end{array}$$

$$\begin{array}{cccc}
    {h}^{-}: & \mathfrak{I}(A,\Phi) & \rightarrow & \{ I' \in \mathfrak{I}(B,\Psi) \mid \varphi_A(\ker(h)) \subseteq I'\} \\
     & J & \mapsto & h^{\dashv}[J]
  \end{array}$$
\end{proposition}
\begin{proof}
By the \textbf{Theorem of the $\mathcal{C}^{\infty}-$Isomorphism}, we have the $\mathcal{C}^{\infty}-$isomorphism:

$$\begin{array}{cccc}
    \overline{h}: & (\dfrac{A}{\ker(h)}, \overline{\Phi}) & \rightarrow & (B,\Psi) \\
     & a + \ker(h) & \mapsto & h(a)
  \end{array}$$

so

$$\begin{array}{cccc}
    \alpha: & {\rm Cong}\left( \dfrac{A}{\ker(h)}, \overline{\Phi}\right) & \rightarrow & {\rm Cong}(B,\Psi) \\
     & S' & \mapsto & \{ (\overline{h}(s), \overline{h}(s')) \mid (s,s') \in S'\}
  \end{array}$$

is a bijection whose inverse is given by:

$$\begin{array}{cccc}
    \beta: & {\rm Cong}\,(B,\Psi) & \rightarrow & {\rm Cong}\left( \dfrac{A}{\ker(h)}, \overline{\Phi}\right) \\
     & R & \mapsto & \{ (\overline{h}^{-1}(r), \overline{h}^{-1}(r')) | (r,r') \in R\} = \{ (a + \ker(h), a'+\ker(h)) | (h(a),h(a')) \in R \}
  \end{array}$$

Taking $R = \ker(h)$ in the \textbf{Lemma \ref{Aracy}} yields a bijection $(q_{\ker(h)})^{*}:  {\rm Cong}\left(\dfrac{A}{\ker(h)}\right) \to \{ S \in {\rm Cong}\,(A) \mid \ker(h) \subseteq S\}$.\\

Now it suffices to prove that $h^{*}= (q_{\ker(h)})^{*}\circ \beta$.\\

Since $\overline{h}$ is an isomorphism, note that $\beta = \overline{h}^{*}$, and as $h = \overline{h} \circ q_{\ker(h)}$, we get $h^{*} = q_{\ker(h)}^{*}\circ \overline{h}^{*}$, as we claimed.\\

The proof for $h^{-}$ is analogous, since $h^{-} = (q_{\ker(h)})^{-}\circ \overline{h}^{-}$.

\end{proof}

The following proposition gives us a description of $\mathcal{C}^{\infty}-$rings via generators and relations.\\

\begin{proposition}\label{Meirelles}Let $(A,\Phi)$ be any $\mathcal{C}^{\infty}-$ring. The $\mathcal{C}^{\infty}-$congruences of $(A,\Phi)$ are classified by the ring-theoretic ideals of $(A,\Phi)$.
\end{proposition}
\begin{proof}
Given any $\mathcal{C}^{\infty}-$ring $(A,\Phi)$, we have the following isomorphism:

$$\overline{\varepsilon}_A: \dfrac{\mathcal{C}^{\infty}(\mathbb{R}^{U(A)})}{\ker(\varepsilon_A)} \stackrel{\cong}{\rightarrow} A$$

so we can write:

$$A \cong \dfrac{L(A)}{\ker(\varepsilon_A)} = \dfrac{\mathcal{C}^{\infty}(\mathbb{R}^{A})}{\ker(\varepsilon_A)}.$$

By \textbf{Lemma \ref{Asso}}, we have a bijection:

$$\psi_A: {\rm Cong}\,(\mathcal{C}^{\infty}(\mathbb{R}^{A}),\Phi_A) \rightarrow \mathfrak{I}(\mathcal{C}^{\infty}(\mathbb{R}^{A}),\Phi_A)$$

Note that given any $R \in {\rm Cong}\,(\mathcal{C}^{\infty}(\mathbb{R}^{A}),\Phi_A)$ such that $\ker(\varepsilon_A) \subseteq R$, we have $\psi_A(\ker(\varepsilon_A)) = \{ g \in \mathcal{C}^{\infty}(\mathbb{R}^{A})| (g,0) \in \ker(\varepsilon_A)\} \subseteq \{ g \in \mathcal{C}^{\infty}(\mathbb{R}^{A})| (g,0) \in R\} = \psi_A(R)$, so:

$$\psi_A[\{ R \in {\rm Cong}\,(\mathcal{C}^{\infty}(\mathbb{R}^{A}),\Phi_A) \mid \ker(\varepsilon_A) \subseteq R\}] \subseteq \{ I' \in \mathfrak{I}(\mathcal{C}^{\infty}(\mathbb{R}^{A}),\Phi_A) \mid \psi_A(\ker(\varepsilon_A)) \subseteq I'\}.$$

Also, let $I' \in \mathfrak{I}(\mathcal{C}^{\infty}(\mathbb{R}^{A}),\Phi_A)$ be such that $\psi_A(\ker(\varepsilon_A))=\{ g \in \mathcal{C}^{\infty}(\mathbb{R}^{A}) \mid (g,\Phi_A(0)) \in \ker(\varepsilon_A)\} \subseteq I'$. Given $(g_1,g_2) \in \ker(\varepsilon_A)$, we have $(g_1 - g_2,\Phi_A(0)) \in \ker(\varepsilon_A)$, and since $\{ g \in \mathcal{C}^{\infty}(\mathbb{R}^{A}) \mid (g,\Phi_A(0)) \in \ker(\varepsilon_A)\} \subseteq I'$, $g_1 - g_2 \in I'$, so $\ker(\varepsilon_A) \subseteq \varphi_A(I')$ and :

$$\varphi_A[\{ I' \in \mathfrak{I}(\mathcal{C}^{\infty}(\mathbb{R}^{A}),\Phi_A) \mid \psi_A(\ker(\varepsilon_A)) \subseteq I'\}] \subseteq \{ R \in {\rm Cong}\,(\mathcal{C}^{\infty}(\mathbb{R}^{A}),\Phi_A) \mid \ker(\varepsilon_A) \subseteq R\}.$$

We have, thus the functions:

$$\begin{array}{cccc}
     \psi_A': & \{ R \in {\rm Cong}\,(\mathcal{C}^{\infty}(\mathbb{R}^{A}),\Phi_A) \mid \ker(\varepsilon_A) \subseteq R\} & \rightarrow & \{ I' \in \mathfrak{I}(\mathcal{C}^{\infty}(\mathbb{R}^{A}), \Phi_A) \mid \psi_A[R] \subseteq I'\} \\
     & R & \mapsto & \varphi_A(R)=\{ g \in \mathcal{C}^{\infty}(\mathbb{R}^{A})| (g,\phi_A(0)) \in R \}
  \end{array}$$

that is, $\psi_A' = \psi_A\upharpoonright_{\{ R \in {\rm Cong}\,(\mathcal{C}^{\infty}(\mathbb{R}^{A}),\Phi_A) \mid \ker(\varepsilon_A) \subseteq R\}}$, and:

$$\begin{array}{cccc}
     \varphi_A': & \{ I' \in \mathfrak{I}(\mathcal{C}^{\infty}(\mathbb{R}^{A}), \Phi_A) \mid \psi_A[R] \subseteq I'\} & \rightarrow &  \{ R \in {\rm Cong}\,(\mathcal{C}^{\infty}(\mathbb{R}^{A}),\Phi_A) \mid \ker(\varepsilon_A) \subseteq R\}\\
     & I' & \mapsto & \varphi_A(I')=\{ (g_1,g_2) \in \mathcal{C}^{\infty}(\mathbb{R}^{A})\times \mathcal{C}^{\infty}(\mathbb{R}^{A}) \mid g_1 - g_2 \in I'\}
  \end{array}$$

where  $$\varphi_A' = \varphi_A\upharpoonright_{\{ I' \in \mathfrak{I}(\mathcal{C}^{\infty}(\mathbb{R}^{A}), \Phi_A) \mid \varphi_A[R] \subseteq I'\}}.$$

Since $\varphi_A'$ and $\psi_A'$ are inverse bijections, we have the following commutative diagram:

$$\xymatrixcolsep{5pc}\xymatrix{
{\rm Cong}\,(\mathcal{C}^{\infty}(\mathbb{R}^{A}),\Phi_A) \ar[r]^{\psi_A} & \mathfrak{I}(\mathcal{C}^{\infty}(\mathbb{R}^{A}),\Phi)\\
\{ R \in {\rm Cong}(\mathcal{C}^{\infty}(\mathbb{R}^{A}),\Phi_A)| \ker(\varepsilon_A) \subseteq R\} \ar@{^{(}->}[u]^{\imath} \ar[r]^{\psi_A'} & \{ I' \in \mathfrak{I}(\mathcal{C}^{\infty}(\mathbb{R}^{A}),\Phi_A) \mid \psi_A(\ker(\varepsilon_A))\subseteq I'\} \ar@{^{(}->}[u]^{\jmath}}$$

where $\imath$ and $\jmath$ are the ordinary inclusions.\\

We also have the bijection given in the \textbf{Lemma \ref{Aracy}}:

$$(q_{\ker(\varepsilon_A)})^{*}: {\rm Cong}\left( \dfrac{\mathcal{C}^{\infty}(\mathbb{R}^{A})}{\ker(\varepsilon_A)}, \overline{\Phi}\right) \rightarrow \{ R \in {\rm Cong}\,(A,\Phi) | \ker(\varepsilon_A) \subseteq R\}$$

and the bijection given in lemmas \textbf{Lemma \ref{lambor}} and \textbf{\ref{Asso}}:

$$(q_{\ker(\varepsilon_A)})^{-}: \mathfrak{I}\left( \dfrac{\mathcal{C}^{\infty}(\mathbb{R}^{A})}{\ker(\varepsilon_A)}, \overline{\Phi}\right) \rightarrow \{ I' \in \mathfrak{I}(\mathcal{C}^{\infty}(\mathbb{R}^{A}), \Phi) | \varphi_A(\ker(\varepsilon_A)) \subseteq I'\}.$$

Hence, we have the following bijection:

$$\psi_{\frac{\mathcal{C}^{\infty}(\mathbb{R}^{A})}{\ker(\varepsilon_A)}}: {\rm Cong}\left( \dfrac{\mathcal{C}^{\infty}(\mathbb{R}^{A})}{\ker(\varepsilon_A)}, \overline{\Phi}\right) \rightarrow \mathfrak{I}\left( \dfrac{\mathcal{C}^{\infty}(\mathbb{R}^{A})}{\ker(\varepsilon_A)}, \overline{\Phi}\right),$$

since the following diagram commutes:

$$\xymatrixcolsep{5pc}\xymatrix{
{\rm Cong}\,(\mathcal{C}^{\infty}(\mathbb{R}^{U(A)}),\Phi) \ar[r]^{\psi_{U(A)}} & \mathfrak{I}(\mathcal{C}^{\infty}(\mathbb{R}^{U(A)}),\Phi)\\
\{ R \in {\rm Cong}(\mathcal{C}^{\infty}(\mathbb{R}^{A}),\Phi_A)| \ker(\varepsilon_A) \subseteq R\} \ar@{^{(}->}[u] \ar[r]^{\psi_A'} & \{ I' \in \mathfrak{I}(\mathcal{C}^{\infty}(\mathbb{R}^{A}),\Phi_A) \mid \varphi_A(\ker(\varepsilon_A))\subseteq I'\}  \ar@{^{(}->}[u] \\
{\rm Cong}\left( \dfrac{\mathcal{C}^{\infty}(\mathbb{R}^{A})}{\ker(\varepsilon_A)}, \overline{\Phi}\right) \ar[u]_{(q_{\ker(\varepsilon_A)})^{*}} \ar[r]_{\psi_{\dfrac{\mathcal{C}^{\infty}(\mathbb{R}^{U(A)})}{\ker(\varepsilon_A)}}} \ar@/^9pc/[uu] & \mathfrak{I}\left( \dfrac{\mathcal{C}^{\infty}(\mathbb{R}^{A})}{\ker(\varepsilon_A)}, \overline{\Phi}\right) \ar[u]^{(q_{\ker(\varepsilon_A)})^{-}} \ar@/_9pc/[uu] }$$

Since $\overline{\varepsilon}_A: \dfrac{\mathcal{C}^{\infty}(\mathbb{R}^{U(A)})}{\ker(\varepsilon_A)} \stackrel{\cong}{\rightarrow} A$ is an isomorphism, we have the bijections:

$${\rm Cong}(A,\Phi) \stackrel{(\overline{\varepsilon}_A)^{*}}{\rightarrow} {\rm Cong}\left( \dfrac{\mathcal{C}^{\infty}(\mathbb{R}^{A})}{\ker(\varepsilon_A)}, \overline{\Phi}\right)$$

and

$$\mathfrak{I}(A,\Phi) \stackrel{(\overline{\varepsilon}_A)^{-}}{\rightarrow} \mathfrak{I}\left( \dfrac{\mathcal{C}^{\infty}(\mathbb{R}^{A})}{\ker(\varepsilon_A)}, \overline{\Phi}\right)$$

so we have the bijection:

$$\psi_{(A,\Phi)}: {\rm Cong}\,(A,\Phi) \to \mathfrak{I}(A,\Phi),$$

since the following diagram commutes:

$$\xymatrixcolsep{5pc}\xymatrix{
{\rm Cong}\left( \dfrac{\mathcal{C}^{\infty}(\mathbb{R}^{A})}{\ker(\varepsilon_A)}, \overline{\Phi}\right) \ar[r]^{\psi_{\frac{\mathcal{C}^{\infty}(\mathbb{R}^{U(A)})}{\ker(\varepsilon_A)}}} & \mathfrak{I}\left( \dfrac{\mathcal{C}^{\infty}(\mathbb{R}^{A})}{\ker(\varepsilon_A)}, \overline{\Phi}\right) \ar[d]^{(\varepsilon_A)^{-}}\\
{\rm Cong}\,(A,\Phi) \ar[u]^{\theta} \ar[r]_{\psi_{(A,\Phi)}} & \mathfrak{I}(A,\Phi)
}$$
\end{proof}

Given any $\mathcal{C}^{\infty}-$ring $(A,\Phi)$, there is a ring-theoretical ideal $I = \psi_A(\ker(\varepsilon_A))$ such that:

$$(A,\Phi) \cong \left( \dfrac{\mathcal{C}^{\infty}(\mathbb{R}^{A})}{I}, \overline{\Phi}\right),$$

that is, every $\mathcal{C}^{\infty}-$ring is the quotient of a free $\mathcal{C}^{\infty}-$ring by some of its ring-theoretic ideals. We say that any $\mathcal{C}^{\infty}-$ring is given by generators and relations.\\

\begin{remark}Let $(A,\Phi)$ be a $\mathcal{C}^{\infty}-$ring. The set ${\rm Cong}\,(A,\Phi)$ is partially ordered by inclusion. Also, given $\{ R_i | i \in I \} \subseteq {\rm Cong}\,(A,\Phi)$, we have:
$$\bigcap_{i \in I}R_i \in {\rm Cong}\,(A,\Phi),$$
so we can define:

$$\begin{array}{cccc}
    \bigwedge : & \mathcal{P}({\rm Cong}(A,\Phi)) & \rightarrow & {\rm Cong} \\
     & \{ R_i \mid i \in I\} & \mapsto & \bigcap_{i \in I}\{ R_i \mid i \in I\}
  \end{array}.$$

Also, given $\{ R_i \mid i \in I\} \subseteq {\rm Cong}\,(A,\Phi)$, we define:

$$\begin{array}{cccc}
    \bigvee : & \mathcal{P}({\rm Cong}(A,\Phi)) & \rightarrow & {\rm Cong} \\
     & \{ R_i \mid i \in I\} & \mapsto & \bigcap \{ R \in {\rm Cong}(A,\Phi) \mid \bigcup_{i \in I}\{ R_i \mid i \in I\} \subseteq R\}
  \end{array},$$

so $({\rm Cong}\,(A,\Phi), \bigwedge, \bigvee)$ is a complete lattice.\\

Note that $\mathfrak{I}(A,\Phi)$, partially ordered by inclusion, also has a structure of complete lattice, since it the set of ring-theoretic ideals of $(A, \Phi(+), \Phi(\cdot), \Phi(-), \Phi(0), \Phi(1))$.\\

We have constructed, in \textbf{Proposition \ref{Meirelles}}, a bijection:

$$\varphi_{(A,\Phi)}: {\rm Cong}\,(A,\Phi) \rightarrow \mathfrak{I}(A,\Phi).$$

$$\begin{array}{cccc}
    \varphi_{(A,\Phi)}: & {\rm Cong}\,(A,\Phi) & \rightarrow & \mathfrak{I}(A,\Phi) \\
     & R & \mapsto & \{ g \in A | (g,0) \in R\}
  \end{array}$$

We claim that $\varphi_{(A,\Phi)}: {\rm Cong}(A,\Phi) \to \mathfrak{I}(A,\Phi)$ is an isomorphism of lattices.\\

Given $\{ R_i | i \in I \} \subseteq {\rm Cong}\,(A,\Phi)$, it is easy to see that:

$$\psi_{(A,\Phi)}\left(\bigwedge \{ R_i | i \in I\} \right) = \bigwedge \psi_{(A,\Phi)}(R_i),$$

so $\psi_{(A,\Phi)}$ is a homomorphism of lattices. Also, given $R,S \in {\rm Cong}\,(A,\Phi)$ such that $R \subseteq S$, we have:

$$\psi_{(A,\Phi)}(R) \subseteq \psi_{(A,\Phi)}(S).$$

Given $I' \supseteq \psi_{(A,\Phi)}(R)$, since $\psi_A'$ is surjective, there is some $S \in {\rm Cong}\,(A,\Phi)$ with $\psi_A'(S) = I'$. Also, since $\psi_A'$ is injective, such an $S$ is unique.\\

Now,

$$\psi_{(A,\Phi)}(R) \subseteq \psi_{(A,\Phi)}(S),$$

so

$$R = \psi_{(A,\Phi)}^{\dashv}[\psi_{(A,\Phi)}(R)] \subseteq {\psi_{(A,\Phi)}}^{\dashv}[\psi_{(A,\Phi)}(S)] = S$$

Since $\psi_{(A, \Phi)}$ is bijective, it follows that $\psi_{(A,\Phi)}$ is an isomorphism of complete lattices. Moreover, both lattices are \textbf{algebraic lattices}, whose compact elements are the finitely generated congruences or ideals.
\end{remark}

The following result relates the ideals of a product of $\mathcal{C}^{\infty}-$rings with the ideals of its factors.\\

\begin{proposition}\label{Gabi}Let $A$ and $B$ be two $\mathcal{C}^{\infty}-$rings and let $\mathfrak{I}(A)$ be the set of all ideals of $A$ and $\mathfrak{I}(B)$ be the set of all ideals of $B$. Every ideal of the product $A \times B$ has the form $\mathfrak{a}\times \mathfrak{b}$, where $\mathfrak{a}$ is an ideal of $A$ and $\mathfrak{b}$ is an ideal of $B$, so we have the following bijection:

$$\begin{array}{cccc}
    \Phi: & \mathfrak{I}(A) \times \mathfrak{I}(B) & \rightarrow & \mathfrak{I}(A \times B) \\
     & (\mathfrak{a},\mathfrak{b}) & \mapsto & \mathfrak{a} \times \mathfrak{b}
  \end{array}$$
\end{proposition}
\begin{proof}
Let $p_1: A\times B \to A$ and $p_2: A \times B \to B$ be the canonical projections of the product:
$$\xymatrixcolsep{5pc}\xymatrix{
 & A \times B \ar[dl]_{p_1} \ar[dr]^{p_2} & \\
A & & B
}$$
and let $I$ be any ideal of $A \times B$. We have:
$$p_1[I] = \{ a \in A | (\exists b \in B)((a,b) \in I)\} = \{ a \in A | (a,0) \in I\}$$
$$p_2[I] = \{ b \in B | (\exists a \in A)((a,b) \in I)\} = \{ b \in B | (0,b) \in I\}$$

We claim that $p_1[I] \times p_2[I] = I$.\\

Given $(a,b) \in I$, then $p_1(a,b) = a \in p_1[I]$ and $p_2(a,b) = b \in p_2[I]$, so $(a,b) \in p_1[I]\times p_2[I]$. Conversely, given $(a,b) \in p_1[I]\times p_2[I]$, $a \in p_1[I]$ and $b \in p_2[I]$, so $(a,0) \in I$ and $(0,b) \in I$, so $(a,b) = (a,0)+(0,b) \in I$.\\

Since for every $\mathfrak{a}$, ideal of $A$ and for every $\mathfrak{b}$ ideal of $B$, $\mathfrak{a} \times \mathfrak{b}$ is an ideal of $A \times B$, the following map is a bijection:

$$\begin{array}{cccc}
    \Phi: & \mathfrak{I}(A) \times \mathfrak{I}(B) & \rightarrow & \mathfrak{I}(A \times B) \\
     & (\mathfrak{a},\mathfrak{b}) & \mapsto & \mathfrak{a} \times \mathfrak{b}
  \end{array}$$

\end{proof}

In the following proposition we are going to describe how to calculate any limit and any directed colimit, making use of the forgetful functor $U: \mathcal{C}^{\infty}{\rm \bf Rng} \rightarrow {\rm \bf Set}$.\\

\begin{proposition}
In the category $\mathcal{C}^{\infty}{\rm \bf Rng}$ of the $\mathcal{C}^{\infty}-$rings, all the limits and all filtered colimits exist and are created by the forgetful functor $U: \mathcal{C}^{\infty}{\rm \bf Rng} \rightarrow {\rm \bf Set}$.
\end{proposition}
\begin{proof}Cf. p. 7 of \cite{MSIA}
\end{proof}

By a general argument, it can be shown that the category $\mathcal{C}^{\infty}{\rm \bf Rng}$ has all small colimits. In particular, coequalizers of pairs of $\mathcal{C}^{\infty}-$homomorphisms, $f,g: (A,\Phi) \to (B,\Psi)$, are given by  quotients:

$$\xymatrix{(A,\Phi) \ar@<1ex>[r]^{f} \ar@<-1ex>[r]_{g} & (B, \Psi) \ar[r]^{q_I} & \left(\dfrac{B}{I}, \overline{\Psi} \right)}$$

where $I = \langle \{(f(a),g(a)) | a \in A\}\rangle$.\\

In order to describe all small colimits, it is enough  to construct coproducts, and since $\mathcal{C}^{\infty}{\rm \bf Rng}$ has filtered colimits, it suffices to construct only finite coproducts. Also, since $\mathbb{R} \cong \mathcal{C}^{\infty}(\{ *\})$ is the initial $\mathcal{C}^{\infty}-$ring, it is enough, by induction, to describe binary coproducts in $\mathcal{C}^{\infty}{\rm \bf Rng}$.\\

\subsection{The $\mathcal{C}^{\infty}-$Coproduct}\label{kop}

\hspace{0.5cm}In this subsection we describe the coproduct in the category $\mathcal{C}^{\infty}{\rm \bf Rng}$, which E. Dubuc calls ``the $\mathcal{C}^{\infty}-$tensor product'', and we call ``the $\mathcal{C}^{\infty}-$coproduct''. First we give its categorial definition, then we define the binary $\mathcal{C}^{\infty}-$coproduct of free, finitely generated and finally the binary $\mathcal{C}^{\infty}-$coproduct of arbitrary $\mathcal{C}^{\infty}-$rings. Then we give a description of the $\mathcal{C}^{\infty}-$coproduct of an arbitrary family of arbitrary $\mathcal{C}^{\infty}-$rings.\\

\begin{definition}\label{cop}Let $(A, \Phi)$ and $(B, \Psi)$  be two  $\mathcal{C}^{\infty}-$rings. We will denote the underlying set of the \index{coproduct}\textbf{coproduct} of $(A, \Phi)$ and $(B, \Psi)$ by $A \otimes_{\infty} B$, and its corresponding canonical arrows by $\iota_A$ and $\iota_B$:
$$\xymatrix{
A \ar[dr]^{\iota_A}& \\
         & A \otimes_{\infty} B \\
B \ar[ur]_{\iota_B} &
}$$
\end{definition}

In order to describe concretely the coproduct in $\mathcal{C}^{\infty}{\rm \bf Rng}$, first we compute the coproduct of two free $\mathcal{C}^{\infty}-$rings with $m$ and $n$ generators.\\

Since $m=\{0, \cdots, m-1 \}$, $n = \{ 0, \cdots, n-1\}$, $m \sqcup n \cong m+n$ and the functor $L: {\rm \bf Set} \rightarrow \mathcal{C}^{\infty}{\rm \bf Rng}$ preserves coproducts (since it is a left adjoint functor), we have:

$$\mathcal{C}^{\infty}(\R^m)\otimes_{\infty}\mathcal{C}^{\infty}(\R^n) \cong \mathcal{C}^{\infty}(\R^{m}\times \R^n) \cong \mathcal{C}^{\infty}(\R^{m+n})$$

Now, given ideals $I \subset \mathcal{C}^{\infty}(\R^m)$ and $J \subset \mathcal{C}^{\infty}(\R^n)$, then:

$$\dfrac{\mathcal{C}^{\infty}(\R^m)}{I}\otimes_{\infty} \dfrac{\mathcal{C}^{\infty}(\R^n)}{J} \cong \dfrac{\mathcal{C}^{\infty}(\R^m \times \R^n)}{(I, J)},$$

where $(I, J) = \langle f \circ \pi_1, g \circ \pi_2 | (f \in I) \& (g \in J)\rangle$, where $\pi_1: \R^m \times \R^n \to \R^m$ and $\pi_2: \R^m \times \R^n \to \R^n$ are the projections on the first and the second coordinates.\\

Now, given any two $\mathcal{C}^{\infty}-$rings, $(A,\Phi)$ and $(B,\Psi)$, we describe concretely their coproduct.\\

First we write $(A,\Phi)$ and $(B,\Psi)$ as colimits of their finitely generated $\mathcal{C}^{\infty}-$sub\-rings:

$$(A,\Phi) \cong \varinjlim_{i \in I} (A_i,\Phi_i)$$
and
$$(B,\Psi) \cong \varinjlim_{j \in I} (B_j,\Psi_j)$$

Then, observing that colimits commute with coproducts, we have:

$$A\otimes_{\infty} B \cong \varinjlim_{\substack{i \in I\\ j \in J}} A_i \otimes_{\infty} B_j$$

Now let $\{ (A_i,\Phi_i) | i \in I\}$ be any set of $\mathcal{C}^{\infty}-$rings. As mentioned in \textbf{Remark \ref{Catarina}} of \textbf{Subsection \ref{DColim}} , such a family has a coproduct in $\mathcal{C}^{\infty}{\rm \bf Rng}$. This coproduct is given by the colimit:

$$\bigotimes_{ \substack{ \infty \\ i \in I}}A_i = \varinjlim_{I' \subseteq_{\rm fin} I} \bigotimes_{\substack{\infty \\ i \in I'}} A_i$$

\subsection{Addition of Variables: The $\mathcal{C}^{\infty}-$Ring of Polynomials}\label{pol}

As an application of the construction given above, we can describe the process of ``adding a set $S$ of variables to a $\mathcal{C}^{\infty}-$ring $(A,\Phi)$''. The construction is given as follows:\\

Let $(A,\Phi)$ be any $\mathcal{C}^{\infty}-$ring and let $S$ be any set. Consider $L(S)= \mathcal{C}^{\infty}(\mathbb{R}^S)$,  the free $\mathcal{C}^{\infty}-$ring on the set $S$ of generators, together with its canonical map, $\jmath_S: S \to \mathcal{C}^{\infty}(\mathbb{R}^S)$. If we denote by:

$$\xymatrixcolsep{5pc}\xymatrix{
A \ar[dr]^{\iota_A} & \\
   & A \otimes_{\infty} \mathcal{C}^{\infty}(\mathbb{R}^S)\\
\mathcal{C}^{\infty}(\mathbb{R}^S) \ar[ur]_{\iota_{\mathcal{C}^{\infty}(\mathbb{R}^S)}}
}$$

the coproduct of $A$ and $\mathcal{C}^{\infty}(\mathbb{R}^S)$, define:
$$x_s := \iota_{\mathcal{C}^{\infty}(\mathbb{R}^S)}(\jmath_S(s)).$$

We thus define:

$$A\{ x_s | s \in S \} := A \otimes_{\infty} \mathcal{C}^{\infty}(\mathbb{R}^S).$$

We have a natural bijection:

$$\dfrac{\mathcal{C}^{\infty}(\mathbb{R}^S) \rightarrow A\otimes_{\infty}\mathcal{C}^{\infty}(\mathbb{R}^S)}{S \rightarrow U(A\otimes_{\infty}\mathcal{C}^{\infty}(\mathbb{R}^S))}$$

\end{document}